        \title{Obstructions to fibering a manifold}
       \author{F.~T.~Farrell}
       \address{Department of Mathematics\\Suny, Binghamton\\ Ny, 13902, \\U.S.A.}
       \email{farrell@math.binghamton.edu}
       \author{Wolfgang L\"uck}
       \address{Westf\"alische Wilhelms-Universit\"at M\"unster\\
               Mathematisches Institut\\
               Einsteinstr.~62,
               D-48149 M\"unster, Germany}
       \email{lueck@math.uni-muenster.de}
       \urladdr{http://www.math.uni-muenster.de/u/lueck}
       \author{Wolfgang Steimle}
       \address{Westf\"alische Wilhelms-Universit\"at M\"unster\\
               Mathematisches Institut\\
               Einsteinstr.~62,
               D-48149 M\"unster, Germany}
       \email{steimle@math.uni-muenster.de}
       \date{August 2009}
       \dedicatory{Dedicated to Bruce Williams  on the occasion of his 
                     60th birthday}
       \keywords{Whitehead torsion, fibering a manifold}       
       \subjclass[2000]{57Q10,55R22}
\DeclareMathAlphabet{\matheurm}{U}{eur}{m}{n}
\DeclareMathOperator{\can}{can}
\DeclareMathOperator{\cok}{cok}
\DeclareMathOperator{\con}{con}
\DeclareMathOperator{\cone}{cone}
\DeclareMathOperator{\cyl}{cyl}
\DeclareMathOperator{\FIB}{FIB}
\DeclareMathOperator{\fib}{fib}
\DeclareMathOperator{\hofib}{hofib}
\DeclareMathOperator{\id}{id}
\DeclareMathOperator{\map}{map}
\DeclareMathOperator{\pr}{pr}
\DeclareMathOperator{\topo}{Top}
\DeclareMathOperator{\Wh}{Wh}
\DeclareMathOperator{\Wall}{Wall}
  \newcommand{\IC}{\mathbb{C}}
  \newcommand{\IP}{\mathbb{P}}
  \newcommand{\IR}{\mathbb{R}}
  \newcommand{\IZ}{\mathbb{Z}}
\newcounter{commentcounter}
\theoremstyle{plain}
\newtheorem{theorem}{Theorem}[section]
\newtheorem{lemma}[theorem]{Lemma}
\newtheorem{corollary}[theorem]{Corollary}
\newtheorem{proposition}[theorem]{Proposition}
\newtheorem{definition}[theorem]{Definition}
\theoremstyle{definition}
\newtheorem{example}[theorem]{Example}
\newtheorem{question}[theorem]{Question}
\newtheorem{remark}[theorem]{Remark}
\newtheorem{notation}[theorem]{Notation}
\theoremstyle{remark}
\let\c@equation=\c@theorem\makeatother
\newcommand{\version}[1]                       
{\begin{center} Last edited on #1\\
Last compiled on \today\\
name of tex-file: \jobname
\end{center}
}
\newcommand{\xycomsquare}[8]                
{$$\xymatrix{#1 \ar[r]^-{#2} \ar[d]^{#4} &
    #3 \ar[d]^{#5}  \\
    #6\ar[r]^-{#7} & #8 }$$}
\newcommand{\sections}[2]{\Gamma\biggl(\begin{array}{c} {#1}\\ \downarrow\\ {#2}\end{array} \biggr)}
\begin{document}

\maketitle

\begin{abstract}
Given a map $f \colon M \to N$ of closed topological manifolds we define
torsion obstructions whose vanishing is a necessary condition for 
$f$ being homotopy equivalent to a projection of a locally trivial fiber bundle.
If $N = S^1$, these torsion obstructions are identified with the ones due to 
Farrell~\cite{Farrell(1971)}.
\end{abstract}


\typeout{---------------------------- Introduction
  ------------------------------------}
\section*{Introduction}
\label{sec:introduction}

Given a map $f \colon M \to N$ of closed topological manifolds, we define
torsion obstructions whose vanishing is a necessary condition for 
$f$ being homotopy equivalent to a projection of a locally trivial fiber bundle.
If $N = S^1$, these torsion obstructions are identified with the ones due to 
Farrell~\cite{Farrell(1971)}.

The basic idea of the construction is as follows. A simple structure
$\xi$ on a space $Y$ with the homotopy type of a finite $CW$-complex
is the choice of an equivalence class of  homotopy equivalences
$X \to Y$ with a finite $CW$-complex as domain, where we call two such maps
$f_i \colon X_i \to Y$ for $i = 1,2$ equivalent, if 
$f_2^{-1} \circ f_1 \colon X_1 \to X_2$
is a simple homotopy equivalence. The classical theory of Whitehead torsion
for homotopy equivalences between finite $CW$-complexes extends to homotopy
equivalences of space with simple structures.

Consider a map $f \colon M \to N$ of closed topological manifolds.
Suppose that $N$ is connected. 
Fix a base point $y$ for $N$. Assume that the
homotopy fiber $\hofib(f)$ of $f$ at $y$ has the homotopy type of a finite
$CW$-complex.   By inspecting the fiber transport of the fibration $\mu_f \colon
\FIB(f) \to N$ associated to $f$, one obtains a homomorphism from
$\pi_1(N,y)$ to the group of homotopy classes of self-homotopy
equivalences of the homotopy fiber.  
In the sequel $\pi(X)$ denotes the fundamental groupoid
of a space $X$ and $\Wh(\pi(X))$ the associated Whitehead group.
Pick a simple structure on the homotopy fiber  $\hofib(f)$. 
If we consider the image of the Whitehead torsion of
self-homotopy equivalences of the homotopy fiber under 
$\Wh\bigl(\pi(\hofib(f))\bigr) \to \Wh(\pi(M))$, we obtain a homomorphism 
$\pi_1(N,y) \to \Wh(\pi(M))$.  It defines an element
$$\Theta(f) \in H^1\bigl(N;\Wh(\pi(M))\bigr).$$
Its definition is independent of the choice of base point $y \in N$.

The element $\Theta(f)$ depends only on the homotopy class of $f$.  If
$f$ is the projection of a locally trivial fiber bundle, then the
fiber transport is given by homeomorphisms and by the topological
invariance of Whitehead torsion this implies $\Theta(f) = 0$.

From now on suppose $\Theta(f) = 0$. Assume for simplicity for the remainder of the
introduction that the Euler characteristic $\chi(N)$ of $N$ is zero.  Then
one can construct a preferred simple structure $\xi(\FIB(f))$ on $\FIB(f)$.
This is obvious if the fibration $\FIB(f)$  is trivial
since the cartesian product of a homotopy equivalence of finite
$CW$-complexes with a finite $CW$-complex
of Euler characteristic zero is simple.
The general case is done by induction over the cells of $Y$
using a construction of a 
pushout simple structure and the fact that a fibration over
$D^n$ is homotopically trivial, where $Y \to N$ 
is some representative of the preferred
simple structure on the closed topological manifold $N$.
Let $\mu_f \colon \FIB(f) \to M$ be the canonical homotopy
equivalence.  Since $M$ is a topological manifold, it carries a
preferred simple structure. Hence the Whitehead torsion $\tau(\mu_f)$
of $\mu_f$ makes sense, and we define a second invariant
$$\tau_{\fib}(f) := \tau(\mu_f) \in \Wh(\pi(M)).$$
The element $\tau_{\fib}(f)$ depends only on the homotopy class of
$f$.  If $f$ is the projection of a locally trivial fiber bundle, 
then $\tau_{\fib}(f) = 0$.

As an illustration we study the case, where the base space is $S^1$
and identify our invariants with the obstruction of fibering $M$ over
$S^1$ due to Farrell~\cite{Farrell(1971)}.  Notice that in this case
Farrell~\cite{Farrell(1971)} shows that the vanishing of the
obstructions imply that $f$ homotopic to the projection of a locally trivial
fiber bundle provided $\dim(M) \ge 5$.  For an arbitrary closed
manifold $N$ as target of $f$ the vanishing of these obstructions will
be necessary but not sufficient for $f$ being homotopic to the
projection of a locally trivial fiber.

We give a composition formula for $\tau_{\fib}$.

We introduce Poincar\'e torsion which is the obstruction for a finite
Poincar\'e complex to be homotopy equivalent to a simple
Poincar\'e complex. 

Finally we briefly give a connection to the parametrized $A$-theoretic
characteristic due to Dwyer-Weiss-Williams
\cite{Dwyer-Weiss-Williams(2003)}  and discuss some open questions.

The paper was supported by the Sonder\-forschungs\-be\-reich
478 -- Geometrische Strukturen in der Mathematik --, the
Graduiertenkolleg -- Analytische Topologie und Metageometrie -- and the
Max-Planck-Forschungspreis and the Leibniz-Preis of the second  author. It was also partially supported by an NSF grant
of the first author.

The third author would like to thank Bruce Williams for helpful explanations concerning the $A$-theory characteristic.

The paper is organized as follows:

\begin{tabular}{ll}%
\ref{sec:Simple_structures_and_Whitehead_torsion}.
&
Simple structures and Whitehead torsion
\\%
\ref{sec:Fibrations}.
&
Fibrations
\\%
\ref{sec:The_simple_structure_on_a_total_space_of_a_fibration}.
&
The simple structure on a total space of a fibration
\\%
\ref{sec:Turning_a_map_into_a_fibration}.
&
Turning a map into a fibration
\\%
\ref{sec:Fiber_torsion_obstructions}.
&
Fiber torsion obstructions
\\%
\ref{sec:base_space_S1}.
&
Base space $S^1$
\\%
\ref{sec:Gluing_h-cobordisms}.
&
Gluing $h$-cobordisms
\\%
\ref{sec:comparison_with_Farrells_obstruction_over_S1}.
&
Comparison with Farrell's obstruction over $S^1$
\\%
\ref{sec:a_composition_formula}.
&
A composition formula
\\%
\ref{sec:Poincare_torsion}.
&
Poincar\'e torsion
\\%
\ref{sec:Connection_to_the_parametrized_A-theory_characteristic}.
&
Connection to the parametrized $A$-theory characteristic
\\%
\ref{sec:Some_questions}.
&
Some questions
\\%
& References
\end{tabular}

\typeout{----------------------- Section 1 ------------------------}

\section{Simple structures and Whitehead torsion}
\label{sec:Simple_structures_and_Whitehead_torsion}

In this section we extend the definition of the  Whitehead torsion
of homotopy equivalences between finite $CW$-complexes to
homotopy equivalences between more general spaces, namely, 
spaces with simple structures.

Let $Y$ be a space of the homotopy type of a finite $CW$-complex.  We
call two maps $f_1\colon X_1 \to Y$ and $f_2\colon X_2 \to Y$ with
finite $CW$-complexes as source and $Y$ as target \emph{simply
  equivalent} if the Whitehead torsion $\tau(f_2^{-1} \circ f_1 \colon
X_1 \to X_2) \in \Wh(\pi(X_2))$ vanishes.  (For the notion of
Whitehead torsion and Whitehead group we refer to \cite{Cohen(1973)}.)

\begin{definition} \label{def:simple_structure} A \emph{simple
    structure} $\xi$ on a space $Y$ with the homotopy type of a finite
  $CW$-complex is a choice of a simple equivalence class of homotopy
  equivalences $u\colon X \to Y$ with a finite $CW$-complex as source
  and $Y$ as target. If $Y$ is a finite $CW$-complex, we refer to the
  simple structure represented by $\id_Y$ as 
  \emph{canonical simple structure} $\xi_{\can}(Y)$ on $Y$.
\end{definition}

Let \xycomsquare{Y_0}{i_1}{Y_1}{i_2}{j_1}{Y_2}{j_2}{Y} be a pushout of
spaces with $i_1\colon Y_0 \to Y_1$ a cofibration.  Suppose that $Y_i$
has the homotopy type of a finite $CW$-complex and comes with a simple
structure $\xi_i$ for $i=0,1,2$.  Then $Y$ has the homotopy type of a
finite $CW$-complex and there is a preferred simple structure $\xi$ on
$Y$ which we will call \emph{the pushout simple structure} and which
is constructed as follows.  Choose a pushout of finite $CW$-complexes
\begin{equation}\label{cellular_pushout}
  \xymatrix{X_0 \ar[r]^{a_1} \ar[d]^{a_2} & X_1 \ar[d]^{b_1}\\
    X_2 \ar[r]^{b_2} & X}
\end{equation}
together with homotopy equivalences $u_i\colon X_i \to Y_i$
representing $\xi_i$ for $i=0,1,2$ such that the maps $a_1$ and $b_2$
are inclusions of $CW$-subcomplexes, the maps $a_2$ and $b_1$ are
cellular and the $n$-skeleton $X_n$ of $X$ is the subspace
$b_1\bigl((X_1)_n\bigr) \cup b_2\bigl((X_2)_n\bigr)$ for every $n \ge
-1$.  The pushout property yields a map $u\colon X \to Y$ which is a
homotopy equivalence. Let the pushout simple structure $\xi$ be the
one represented by $u$.

The proof that such a diagram \eqref{cellular_pushout} together with
maps $u_i$ exists and that $\xi$ only depends on $\xi_i$ and not on
the choice of $(X_i,u_i)$ can be found in \cite[page~74~ff.]{Lueck(1989)}.

Given two spaces $(X,\xi)$ and $(Y,\eta)$ with simple structures, the
\emph{product simple structure} $\xi\times \eta$ on $X \times Y$ is
represented by crossing some representative for $\xi$ with some
representative for $\eta$. This is well-defined since the product of
two simple homotopy equivalences between finite $CW$-complexes is again
a simple homotopy equivalence.

Given a homotopy equivalence $f\colon (X,\xi) \to (Y,\eta)$ of spaces
with simple structures, we define its Whitehead torsion
\begin{eqnarray}
  \tau(f) & \in & \Wh(\pi(Y))
  \label{definition_of_Whitehead_torsion_for_spaces_with_simple_structure}
\end{eqnarray}
by $v_*(\tau(v^{-1}\circ f \circ u))$, where $u\colon X' \to X$ and
$v\colon Y' \to Y$ are representatives of the simple structures,
$\tau(v^{-1}\circ f \circ u) \in \Wh(\pi(Y'))$ is the classical Whitehead
torsion of a homotopy equivalence of finite $CW$-complexes and
$v_*\colon \Wh(\pi(Y')) \to \Wh(\pi(Y))$ is the isomorphism induced by
$v$.  The standard properties of the Whitehead torsion of a homotopy
equivalence of finite $CW$-complexes carry over to homotopy
equivalences of spaces with simple structure. Namely, we get (see
\cite[(22.1), (23.1), (23.2)]{Cohen(1973)}, \cite[Theorem
4.33]{Lueck(1989)})

\begin{lemma}
  \label{lem:properties_of_Whitehead_torsion}

  \begin{enumerate}

  \item \label{lem:properties_of_Whitehead_torsion:homotopy_invariance}
    Homotopy invariance\\[1mm]
    Let $f,g\colon X \to Y$ be maps of spaces with simple structures.
    If $f$ and $g$ are homotopic, then 
    $$\tau(f) =\tau(g);$$

  \item \label{lem:properties_of_Whitehead_torsion:composition_formula}
    Composition formula\\[1mm]
    Let $f:X \to Y$ and $g\colon Y \to Z$ be maps of spaces with
    simple structures. Then 
    $$\tau(g \circ f) = \tau(g) + g_*(\tau(f));$$

  \item \label{lem:properties_of_Whitehead_torsion:sum_formula}
    Sum formula\\[1mm]
    Let\\[2mm]
    $\xymatrix{X_0 \ar[r]^-{a_1} \ar[d]^{a_2} &
      X_1 \ar[d]^{b_1}  \\
      X_2\ar[r]^-{b_2} & X }$ \quad \raisebox{-7mm}{and } \quad
    $\xymatrix{Y_0 \ar[r]^-{i_1} \ar[d]^{i_2} &
      Y_1 \ar[d]^{j_1}  \\
      Y_2\ar[r]^-{j_2} & Y }$
    \\[2mm]
    be pushouts of spaces with $a_1$ and $i_1$ cofibrations.  Let
    $j_0\colon Y_0 \to Y$ be $j_1 \circ i_1 = j_2 \circ i_2$.  Suppose
    that all spaces come with simple structures such that $X$ and $Y$
    carry the pushout simple structure.  Let $f_i\colon X_i \to Y_i$
    be homotopy equivalences for $i=0,1,2$ such that $f_1 \circ a_1 =
    i_1 \circ f_0$ and $f_2 \circ a_2 = i_2 \circ f_0$. Let $f\colon X
    \to Y$ be the map induced by the pushout property.

    Then $f$ is a homotopy equivalence and
$$\tau(f) = (j_1)_*(\tau(f_1)) + (j_2)_*(\tau(f_2)) - (j_0)_*(\tau(f_0));$$

\item \label{lem:properties_of_Whitehead_torsion:product_formula}
  Product formula\\[1mm]
  Let $f_1\colon X_1 \to Y_1$ and $f_2\colon X_2 \to Y_2$ be homotopy
  equivalences of path-connected spaces with simple structures.  Equip
  $X_1 \times X_2$ and $Y_1\times Y_2$ with the product simple structures.
  Define $i_1\colon Y_1 \to Y_1 \times Y_2$ to be the inclusion
  of $Y_1$ into $Y_1 \times Y_2$ with respect to some base point $y_2
  \in Y_2$ and analogously define $i_2\colon Y_2 \to Y_1 \times Y_2$.

  Then
$$\tau(f_1 \times f_2) = \chi(Y_1) \cdot (i_2)_*(\tau(f_2))
+ \chi(Y_2) \cdot (i_1)_*(\tau(f_1)).$$
\end{enumerate}
\end{lemma}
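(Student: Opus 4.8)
The plan is to reduce each of the four assertions to its counterpart for homotopy equivalences of finite $CW$-complexes, which is classical (see \cite[(22.1), (23.1), (23.2)]{Cohen(1973)}, \cite[Theorem~4.33]{Lueck(1989)}). Throughout, for a space $(Z,\zeta)$ with simple structure fix a representative $w_Z\colon Z'\to Z$, so that by definition $\tau(g)=(w_Y)_*\,\tau\bigl(w_Y^{-1}\circ g\circ w_X\bigr)$ for a homotopy equivalence $g\colon(X,\xi)\to(Y,\eta)$, the right hand side being a classical Whitehead torsion of a homotopy equivalence of finite $CW$-complexes. \emph{Homotopy invariance} is then immediate: from $f\simeq g$ we get $w_Y^{-1}\circ f\circ w_X\simeq w_Y^{-1}\circ g\circ w_X$, hence equality of their classical torsions, and we apply $(w_Y)_*$.

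For the \emph{composition formula}, note that $w_Z^{-1}\circ(g\circ f)\circ w_X$ is homotopic to $(w_Z^{-1}\circ g\circ w_Y)\circ(w_Y^{-1}\circ f\circ w_X)$; applying the classical composition formula and then $(w_Z)_*$ yields $\tau(g\circ f)=\tau(g)+(w_Z)_*\bigl(w_Z^{-1}\circ g\circ w_Y\bigr)_*\,\tau\bigl(w_Y^{-1}\circ f\circ w_X\bigr)$, and the identity $(w_Z)_*\bigl(w_Z^{-1}\circ g\circ w_Y\bigr)_*=g_*(w_Y)_*$ (because $w_Z\circ(w_Z^{-1}\circ g\circ w_Y)$ and $g\circ w_Y$ induce the same map on fundamental groupoids) turns the last summand into $g_*\tau(f)$. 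For the \emph{product formula} we use that the product simple structure is represented by the product of representatives; hence $(w_{Y_1}\times w_{Y_2})^{-1}\circ(f_1\times f_2)\circ(w_{X_1}\times w_{X_2})$ is homotopic to $(w_{Y_1}^{-1}\circ f_1\circ w_{X_1})\times(w_{Y_2}^{-1}\circ f_2\circ w_{X_2})$, and the classical product formula for finite $CW$-complexes, pushed forward by $(w_{Y_1}\times w_{Y_2})_*$ and combined with $\chi(Y_i')=\chi(Y_i)$ and the fact that the inclusions $i_k$ are covered up to homotopy by the inclusions of the $CW$-models, gives the claim.

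The \emph{sum formula} is where real work is needed, because the chosen representatives must be assembled into a strictly commutative map between two cellular pushouts. Using the construction of the pushout simple structure recalled before the lemma (and \cite[page~74~ff.]{Lueck(1989)}), choose a cellular pushout of finite $CW$-complexes modelling the pushout that defines the pushout simple structure on $X$, together with representatives $u_i$ of $\xi_i$ on the three pieces, and likewise one modelling $Y$, with representatives $v_i$; after replacing a leg by a mapping cylinder if necessary we may assume all the structure maps involved are cofibrations. Then modify the maps $v_i^{-1}\circ f_i\circ u_i$ between $CW$-models within their homotopy classes, using the homotopy extension property and extending first over the model of $X_0$ and then over those of $X_1$ and $X_2$, so that they strictly commute with the structure maps; by the gluing lemma they induce a homotopy equivalence $\widetilde f$ between the $CW$-models of $X$ and $Y$, and one checks that $w_Y\circ\widetilde f\simeq f\circ w_X$, so that $\tau(f)=(w_Y)_*\tau(\widetilde f)$. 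Now the classical sum theorem applies to $\widetilde f$; pushing forward along $(w_Y)_*$, and using homotopy invariance to rewrite $(v_i)_*\tau\bigl(v_i^{-1}\circ f_i\circ u_i\bigr)=\tau(f_i)$ and the compatibility $w_Y\circ(\text{inclusion of the }Y_i\text{-model})\simeq j_i\circ v_i$, we arrive at $\tau(f)=(j_1)_*\tau(f_1)+(j_2)_*\tau(f_2)-(j_0)_*\tau(f_0)$.

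The step I expect to be the main obstacle is precisely this last assembly: arranging the successive homotopies so that the $CW$-model maps become strictly compatible with the cellular pushout structure — the order of the extensions matters, and a leg that is only cellular rather than a cofibration has to be replaced by a mapping cylinder — and verifying that the induced map $\widetilde f$ really lies in the homotopy class of $w_Y^{-1}\circ f\circ w_X$, so that no spurious torsion is introduced. All the "up to homotopy" replacements used along the way are licensed by the homotopy invariance proved at the outset.
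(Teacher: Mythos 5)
The paper itself does not prove this lemma at all: it simply asserts that the classical properties of Whitehead torsion for finite $CW$-complexes ``carry over'' and cites Cohen (22.1), (23.1), (23.2) and L\"uck, Theorem 4.33. Your proposal supplies exactly the reduction-to-the-classical-case argument that those citations implicitly stand in for, and it is correct in all essential points. In particular, your handling of parts (i), (ii) and (iv) --- choosing representatives $w_\bullet$ of the simple structures, rewriting $w_Z^{-1}\circ(g\circ f)\circ w_X$ as a composite of two classical homotopy equivalences, and using that homotopic maps induce the same homomorphism on Whitehead groups to convert $(w_Z)_*\bigl(w_Z^{-1}\circ g\circ w_Y\bigr)_*$ into $g_*(w_Y)_*$ --- is precisely what is needed and is clean. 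You are also right to single out the sum formula (iii) as the only place where genuine work is required: one must turn the three homotopy-commutative squares of $CW$-models into strictly commutative ones (using HEP, first on the model of $X_0$ and then on those of $X_1$, $X_2$, replacing a non-cofibration leg by a mapping cylinder where necessary), apply the classical sum theorem, and then check that the glued map $\widetilde f$ lies in the homotopy class of $w_Y^{-1}\circ f\circ w_X$. The one point your write-up leaves at the level of ``one checks'' is the last of these --- that the glued map satisfies $w_Y\circ\widetilde f\simeq f\circ w_X$ --- which requires the individual homotopies $\widetilde f_i\simeq v_i^{-1}\circ f_i\circ u_i$ to themselves be glued, again by HEP; this is routine but should be flagged as using the cofibration hypothesis once more. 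Apart from that elaboration, your argument is the standard one and matches what the cited sources prove.
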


\begin{remark} \label{rem:not-necessarily_cellular}
Let $X$ be a finite $CW$-complex. Consider any pushout describing
how $X_n$ is obtained from $X_{n-1}$ by attaching cells
$$\xymatrix@!C=6em{\coprod_{I_n} S^{n-1} \ar[r]^-{\coprod_{i \in I_n} q_i} \ar[d]^{} &
  X_{n-1} \ar[d]^{}  \\
  \coprod_{I_n} D^n \ar[r]^-{\coprod_{i \in I_n} Q_i} & X_{n} }$$
Equip the two upper corners and the left lower corner with the canonical simple structure
with respect to any $CW$-structure. Then the pushout simple structure on the right lower
corner agrees with the canonical simple structure with respect to any $CW$-structure.

This is obvious if we equip each $S^{n-1}$ with some finite $CW$-structure,
$D^n$ with the $CW$-structure which is obtained from the one on $S^{n-1}$  by attaching
one $n$-cell with the identity $S^{n-1} \to S^{n-1}$, we equip $X_{n-1}$ and $X_n$ with
given $CW$-structures and each map $q_i$ is cellular. The general case follows
using the cellular approximation theorem, the fact that changing the 
attaching maps by a homotopy does not change the simple homotopy type
(see~\cite[see~(7.1) on page~23]{Cohen(1973)})
and the topological invariance of Whitehead torsion
(see~\cite{Chapman(1974)}).
\end{remark}

\typeout{----------------------- Section 2 ------------------------}

\section{Fibrations}
\label{sec:Fibrations}

In this section we record some basis facts about fibrations.

Recall that a fibration $p\colon E \to B$ is a map which has the
\emph{homotopy lifting property}, i.e., for any homotopy $h\colon X
\times [0,1] \to B$ and map $f\colon X \to E$ with $p \circ f = h_0$
there is a homotopy $H\colon X \times [0,1] \to E$ satisfying $p \circ
H = h$ and $H_0 = f$, where here and in the sequel $h_t(x) := h(x,t)$
and $H_t(x) := H(x,t)$.  For general information about fibrations we
refer for instance to \cite[page~342~ff.]{Switzer(1975)},
\cite[I.7]{Whitehead(1978)}.  We mention that we will work in the
category of compactly generated spaces \cite{Steenrod(1967)},
\cite[I.4]{Whitehead(1978)}.  A map $(\overline{f},f) \colon p \to p'$
of fibrations $p\colon E \to B$ to $p'\colon E' \to B'$ consists of a
commutative diagram \xycomsquare{E}{\overline{f}}{E'}{p}{p'}{B}{f}{B'}
A homotopy $h\colon X \times [0,1] \to E$ is called a \emph{fiber
  homotopy} if $p \circ h$ is stationary, i.e., $p \circ h(x,t) =
p\circ h(x,0)$ for all $(x,t) \in X \times [0,1]$. Two maps
$f_0,f_1\colon X \to E$ with $p \circ f_0 = p \circ f_1$ are called
\emph{fiber homotopic} $f_0 \simeq_p f_1$ if there is a fiber homotopy
$h\colon X \times[0,1] \to E$ with $h_0 = f_0$ and $h_1 = f_1$.  A
\emph{fiber homotopy equivalence} from the fibration $p\colon E \to B$
to the fibration $p'\colon E' \to B$ over the same base is a map of
fibrations of the shape $(\overline{f},\id)\colon p \to p'$ such that
there exists a map of fibrations $(\overline{g},\id)$ with
$\overline{g} \circ \overline{f} \simeq_p \id$ and $\overline{f} \circ
\overline{g} \simeq_p \id$. The pullback of a fibration $p\colon E \to
B$ with a map $f\colon B' \to B$
\xycomsquare{f^*E}{\overline{f}}{E}{p_f}{p}{B'}{f}{B} is again a
fibration $p_f\colon f^*E \to B$. The elementary proof of the next
lemma can be found for instance in \cite[page~342~ff.]{Switzer(1975)}.

\begin{lemma} \label{lem:fiber_homotopies} Let $p\colon E \to B$ be a
  fibration.

  \begin{enumerate}
  \item \label{lem:fiber_homotopies:homotopies} Let $H\colon X \times
    [0,1] \to B$ be a homotopy $f_0 \simeq f_1\colon X \to B$.  Let
    $H'\colon f^*_0E \times [0,1] \to E$ be a solution of the homotopy
    lifting problem for $H\circ (p_{f_0} \times \id_{[0,1]}): f^*_0E
    \times [0,1] \to B$ and $\overline{f_0}\colon f_0^*E \to
    E$. Define $g_H\colon f_0^*E \to f_1^*E$ by $H_1'$ and $p_{f_0}$
    using the pullback property of $f_1^*E$.

    Then $(g_H,\id)\colon f_0^*E \to f_1^*E$ is a fiber homotopy
    equivalence and $H'$ is a homotopy $\overline{f_1} \circ g_H
    \simeq \overline{f_0}$.

  \item \label{lem:fiber_homotopies:homotopies_of_homotopies} Let
    $K\colon X \times [0,1] \to B$ be a second homotopy $f_0 \simeq
    f_1\colon X \to B$ and $M\colon X \times [0,1] \times [0,1]\to B$
    be homotopy relative $X \times \{0,1\}$ between $H$ and $K$. Then
    $M$ induces a fiber homotopy from $g_H$ to $g_K$.
  \end{enumerate}
\end{lemma}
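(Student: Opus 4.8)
The plan is to construct the comparison map $g_H$ directly from the homotopy lifting property together with the universal property of the pullback, and then to pin down how little it depends on the choices made. It is cleanest to prove part~(ii) first and to deduce the ``fiber homotopy equivalence'' assertion of part~(i) from it.

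For the construction in part~(i): at parameter $0$ the homotopy $H\circ(p_{f_0}\times\id_{[0,1]})\colon f_0^*E\times[0,1]\to B$ equals $f_0\circ p_{f_0}=p\circ\overline{f_0}$, so $\overline{f_0}$ is an initial lift and the homotopy lifting property of $p$ produces $H'$ with $H_0'=\overline{f_0}$ and $p\circ H'=H\circ(p_{f_0}\times\id_{[0,1]})$. Then $p\circ H_1'=f_1\circ p_{f_0}$, so the pair $(p_{f_0},H_1')$ factors through $f_1^*E$ and defines $g_H\colon f_0^*E\to f_1^*E$ with $p_{f_1}\circ g_H=p_{f_0}$ and $\overline{f_1}\circ g_H=H_1'$. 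This is exactly the asserted map of fibrations over $\id_X$, and $H'$ is, by construction, a homotopy between $\overline{f_0}$ and $\overline{f_1}\circ g_H$.

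For part~(ii), parametrize the square $[0,1]\times[0,1]$ so that its four edges carry $H$, $K$, the constant homotopy at $f_0$ (the edge along which both $H$ and $K$ start), and a fourth edge along which both $H$ and $K$ end. Over the preimage in $f_0^*E$ of the union $J$ of the first three edges one has a partial lift of $M\circ(p_{f_0}\times\id)$: use $H'$ over the $H$-edge, $K'$ over the $K$-edge, and the map constant equal to $\overline{f_0}$ over the $f_0$-edge; these agree at the two shared corners because $H_0'=K_0'=\overline{f_0}$. Since $([0,1]^2,J)$ is homeomorphic to $([0,1]^2,[0,1]\times\{0\})$, the homotopy lifting property extends this to a lift $M'\colon f_0^*E\times[0,1]^2\to E$. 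Restricting $M'$ to the fourth edge and pairing with $p_{f_0}$ yields a one-parameter family of maps $f_0^*E\to f_1^*E$ whose composite with $p_{f_1}$ is $p_{f_0}$ throughout --- that is, a fiber homotopy --- with ends $g_H$ and $g_K$ by the defining property of these maps. The one genuinely fiddly point of the whole argument is this bookkeeping: that the partial lift really is well defined on $J$, and that its extension restricts on the remaining edge to a fiber homotopy with the correct endpoints.

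Finally, that $(g_H,\id)$ is a fiber homotopy equivalence in part~(i) is formal once part~(ii) is available. Applying (ii) with $M$ constant shows the fiber homotopy class of $g_H$ is independent of the lift $H'$. Lifting a homotopy $H_1\colon f_0\simeq f_1$, then lifting $H_2\colon f_1\simeq f_2$ along $g_{H_1}$, and concatenating gives $g_{H_1*H_2}\simeq_p g_{H_2}\circ g_{H_1}$, while the constant lift gives $g_{\mathrm{const}}\simeq_p\id$. Now put $\overline{H}(x,t)=H(x,1-t)$. The usual null-homotopy of a path composed with its reverse, applied fiberwise in $x$, shows that $H*\overline{H}$ is homotopic rel $X\times\{0,1\}$ to the constant homotopy at $f_0$ and $\overline{H}*H$ rel $X\times\{0,1\}$ to the constant homotopy at $f_1$; hence by (ii), $g_{\overline{H}}\circ g_H\simeq_p g_{H*\overline{H}}\simeq_p\id_{f_0^*E}$ and $g_H\circ g_{\overline{H}}\simeq_p g_{\overline{H}*H}\simeq_p\id_{f_1^*E}$. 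This is precisely the definition of a fiber homotopy equivalence, with $(g_{\overline{H}},\id)$ as the inverse.
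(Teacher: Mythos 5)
Your proof is correct. The paper itself supplies no argument for this lemma---it simply cites Switzer (page~342~ff.)---so there is no proof in the paper to compare against; your write-up fills that gap along what is in fact the standard route. The organizational choice worth highlighting is that you establish part~(ii) first (by extending a partial lift over three sides of the square via the homeomorphism $([0,1]^2,J)\cong([0,1]^2,[0,1]\times\{0\})$) and then deduce the fiber-homotopy-equivalence half of part~(i) from it by purely formal manipulation: independence of the chosen lift (a special case of~(ii) with $M$ the constant square but $H'$, $K'$ different), the composition law $g_{H_1*H_2}\simeq_p g_{H_2}\circ g_{H_1}$, the identity $g_{\mathrm{const}}\simeq_p\id$, and the reverse-path trick. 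This is a clean ordering because it isolates the single substantive application of the homotopy lifting property. The one step I would spell out a bit more is ``lifting $H_2$ along $g_{H_1}$ and concatenating gives $g_{H_1*H_2}\simeq_p g_{H_2}\circ g_{H_1}$'': the point is that $H_2'\circ(g_{H_1}\times\id_{[0,1]})$ is a lift of $H_2\circ(p_{f_0}\times\id)$ starting at $(H_1')_1=\overline{f_1}\circ g_{H_1}$, so the concatenation $H_1' * \bigl(H_2'\circ(g_{H_1}\times\id)\bigr)$ is an admissible lift of $H_1*H_2$ whose endpoint is $\overline{f_2}\circ g_{H_2}\circ g_{H_1}$; the universal property of $f_2^*E$ then gives equality on the nose for this choice of lift, and lift-independence promotes it to a fiber homotopy. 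That is a presentation nit, not a gap.
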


Let $p\colon E \to B$ be a fibration. Denote by $F_b$ the fiber
$p^{-1}(b)$ for $b \in B$.  For any homotopy class $[w]$ of paths
$w\colon [0,1] \to B$ we obtain by Lemma~\ref{lem:fiber_homotopies} a
homotopy class $t([w])$ of maps $F_{w(0)} \to F_{w(1)}$ called the
\emph{fiber transport along w}.  If $v$ and $w$ are paths with $v(1) =
w(0)$, then $t([w]) \circ t([v]) = t([v \ast w])$. The constant path
$c_b$ induces the identity on $F_b$.  We mention that in the situation
of
Lemma~\ref{lem:fiber_homotopies}~\ref{lem:fiber_homotopies:homotopies}
for each $x \in X$ the map $F_{f_0(x)} \to F_{f_1(x)}$ induced by
$g_H$ represents the fiber transport along the path $H(x,-)$.

\begin{definition} \label{def:fiber_trivialization} Let $p\colon E \to
  B$ be a fibration, $f\colon X \to B$ be a map from a
  space $X$ to $B$ and $x \in X$ and $b \in B$.  Let $w\colon [0,1]
  \to B$ be a path from $b$ to $f(x)$.  A \emph{fiber trivialization}
  of $f^*E$ with respect to $(b,x,w)$ is a fiber homotopy equivalence
  $T\colon F_b \times X \to f^*E$ over $X$ such that the map $F_b \to
  F_{f(x)}$ induced by $T$ represents the fiber transport $t([w])$ for
  $p$ along $w$.
\end{definition}

Lemma~\ref{lem:fiber_homotopies} implies
\begin{lemma}
  \label{lem:fiber_trivializations}
  Consider the situation of Definition~\ref{def:fiber_trivialization}.
  Suppose additionally that $X$ is contractible. Then

  \begin{enumerate}

  \item \label{lem:fiber_trivializations:existence} The exists a fiber
    trivialization with respect to $(b,x,w)$;

  \item \label{lem:fiber_trivializations:uniqueness} Two fiber
    trivializations with respect to $(b,x,w)$ are fiber homotopic;

  \item \label{lem:fiber_trivializations:composition} Let $T_i\colon
    F_{b_i} \times X \to f^*E$ be a fiber trivialization with respect
    to $(b_i,x_i,w_i)$ for $i= 0,1$.  Choose a path $v\colon [0,1] \to
    X$ from $x_0$ to $x_1$.  Let $t\colon F_{b_0} \to F_{b_1}$ be a
    representative of the fiber transport of $p$ along $w_0 \ast f(v)
    \ast w_1^-$. Then we get a fiber homotopy
$$T_1 \circ (t \times \id_X) \simeq_{p_f} T_0;$$

\item \label{lem:fiber_trivializations:homotopy_invariance} Let
  $H\colon X \times [0,1] \to B$ be a homotopy $f_0 \simeq f_1$.  Let
  $v$ be the path in $B$ from $f_0(x)$ to $f_1(x)$ given by $h(x,-)$
  and let $w_0$ be a path from $b$ to $f_0(x)$. Put $w_1 = w_0 \ast
  v$.  Let $T_i\colon F_b \times X \to f_i^*E$ be the fiber
  trivialization for $f_i$ with respect to $(b,x,w_i)$ for $i =0,1$ and
  $g_H\colon f_0^*E \to f_1^*E$ be the fiber homotopy equivalence of
  Lemma~\ref{lem:fiber_homotopies}~\ref{lem:fiber_homotopies:homotopies}.
  Then we get a fiber homotopy over $X$
$$g_H \circ T_0 \simeq_{p_{f_1}} T_1.$$

\end{enumerate}
\end{lemma}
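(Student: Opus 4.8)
The plan is to prove \ref{lem:fiber_trivializations:existence} and \ref{lem:fiber_trivializations:uniqueness} directly, and then to deduce \ref{lem:fiber_trivializations:composition} and \ref{lem:fiber_trivializations:homotopy_invariance} from \ref{lem:fiber_trivializations:uniqueness} by checking, on the fibre over a single point, that the two maps in question are fibre trivializations with respect to the same data. Throughout I will use that the pullback of $p$ along the constant map $X \to B$ with value $b'$ is the trivial fibration $F_{b'} \times X \to X$, whose fibre over every point of $X$ is $F_{b'}$, and that a fibre homotopy equivalence over $X$ restricts over each point of $X$ to a homotopy equivalence of fibres whose homotopy inverse is induced by any fibre homotopy inverse over $X$.

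For \ref{lem:fiber_trivializations:existence} I would first build a homotopy $H \colon X \times [0,1] \to B$ from the constant map with value $b$ to $f$ whose restriction $H(x,-)$ is homotopic rel endpoints to $w$. Then Lemma~\ref{lem:fiber_homotopies}~\ref{lem:fiber_homotopies:homotopies}, applied to $H$, yields a fibre homotopy equivalence $g_H \colon F_b \times X \to f^*E$ over $X$, and by the remark following Lemma~\ref{lem:fiber_homotopies} its restriction to the fibre over $x$ represents the fibre transport along $H(x,-)$, that is, $t([w])$; hence $T := g_H$ is the required fibre trivialization. To build $H$: since $X$ is contractible, hence path-connected, I may choose a contraction $\kappa \colon X \times [0,1] \to X$ with $\kappa_1 = \id_X$ and $\kappa_0$ the constant map with value $x$; then $f \circ \kappa$ is a homotopy from the constant map with value $f(x)$ to $f$ which restricts at $x$ to the loop $\ell := f \circ \kappa(x,-)$ at $f(x)$, and I take $H$ to be the concatenation of the homotopy $(y,s) \mapsto (w \ast \ell^-)(s)$, constant in $y$, with $f \circ \kappa$. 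Its restriction at $x$ is $(w \ast \ell^-) \ast \ell$, which is homotopic rel endpoints to $w$.

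For \ref{lem:fiber_trivializations:uniqueness}, let $T_0, T_1 \colon F_b \times X \to f^*E$ both be fibre trivializations with respect to $(b,x,w)$, let $\overline{T_1}$ be a fibre homotopy inverse of $T_1$ over $X$, and put $\phi := \overline{T_1} \circ T_0$, a fibre homotopy self-equivalence of $F_b \times X$ over $X$. It suffices to prove $\phi \simeq_{p_f} \id$, because composing such a fibre homotopy with $T_1$ and using $T_1 \circ \overline{T_1} \simeq_{p_f} \id$ then gives $T_0 \simeq_{p_f} T_1$. Writing $\phi(y,\xi) = (\psi(y,\xi),\xi)$, the key observation is that $(y,\xi,s) \mapsto (\psi(y,\kappa(\xi,s)),\xi)$ is a fibre homotopy over $X$ from the product map $\phi_x \times \id_X$ to $\phi$, where $\phi_x \colon F_b \to F_b$ denotes the restriction of $\phi$ to the fibre over $x$. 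Restricting $T_0$, $T_1$ and $\overline{T_1}$ to the fibre over $x$ and using that $T_0|_x$ and $T_1|_x$ both represent $t([w])$ shows $\phi_x \simeq t([w])^{-1} \circ t([w]) \simeq \id_{F_b}$; crossing such a homotopy with $\id_X$ gives $\phi_x \times \id_X \simeq_{p_f} \id$, whence $\phi \simeq_{p_f} \id$.

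For \ref{lem:fiber_trivializations:composition} and \ref{lem:fiber_trivializations:homotopy_invariance} I would use that the fibre transport of $g^*E$ along a path $v$ in $X$ equals the fibre transport of $E$ along $g \circ v$, and that fibre transport is natural for maps of fibrations (a standard property following from the homotopy lifting property); since fibre transport in a trivial fibration $F_c \times X$ is the identity, this gives $T|_{x_1} \simeq t([g \circ v]) \circ T|_{x_0}$ for a fibre homotopy equivalence $T \colon F_c \times X \to g^*E$ over $X$ and a path $v$ from $x_0$ to $x_1$ in $X$. For \ref{lem:fiber_trivializations:composition}, applying this to $T_1$ yields $T_1|_{x_0} \simeq t([w_1 \ast f(v)^-])$; combining with $t$ by means of the composition rule $t([w]) \circ t([v]) = t([v \ast w])$ and cancelling $f(v)^- \ast f(v)$ up to homotopy shows that $(T_1 \circ (t \times \id_X))|_{x_0}$ represents $t([w_0])$, so $T_1 \circ (t \times \id_X)$ and $T_0$ are fibre trivializations for the same data $(b_0,x_0,w_0)$ and \ref{lem:fiber_trivializations:uniqueness} applies. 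For \ref{lem:fiber_trivializations:homotopy_invariance}, the remark following Lemma~\ref{lem:fiber_homotopies} gives that $g_H$ restricts over $x$ to a representative of $t([H(x,-)]) = t([v])$, so $(g_H \circ T_0)|_x$ represents $t([v]) \circ t([w_0]) = t([w_0 \ast v]) = t([w_1])$, making $g_H \circ T_0$ and $T_1$ fibre trivializations for $(b,x,w_1)$, to which \ref{lem:fiber_trivializations:uniqueness} (for the map $f_1$) applies. I expect the only genuinely delicate point to be the loop correction by $w \ast \ell^-$ in the construction of $H$ in \ref{lem:fiber_trivializations:existence}: one cannot merely contract $X$, since the resulting homotopy restricts at $x$ to the possibly nontrivial loop $\ell$. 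Everything else is bookkeeping with the composition rule for fibre transport together with the straightening argument in the proof of \ref{lem:fiber_trivializations:uniqueness}, which is really the conceptual heart of the lemma.
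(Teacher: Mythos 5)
Your proof is correct and is a careful elaboration of what the paper leaves entirely implicit: the paper's ``proof'' of Lemma~\ref{lem:fiber_trivializations} is the single phrase that it follows from Lemma~\ref{lem:fiber_homotopies}, so you are supplying exactly the detail the authors chose to suppress, and you do so along the lines they clearly intended. Your two technical points are the right ones and are handled correctly: in \ref{lem:fiber_trivializations:existence} a naive contraction of $X$ would only produce a trivialization for the loop-corrected path $\ell$ rather than for $w$, and you fix this by precomposing the homotopy with the path $w\ast\ell^-$; in \ref{lem:fiber_trivializations:uniqueness} the contraction $\kappa$ gives the straightening fiber homotopy from $\phi_x\times\id_X$ to $\phi$, after which the hypothesis that both $T_0|_x$ and $T_1|_x$ represent $t([w])$ kills $\phi_x$ up to homotopy. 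The reductions of \ref{lem:fiber_trivializations:composition} and \ref{lem:fiber_trivializations:homotopy_invariance} to \ref{lem:fiber_trivializations:uniqueness}, using naturality of fiber transport and the composition rule $t([w])\circ t([v]) = t([v\ast w])$, are likewise sound.
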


\typeout{----------------------- Section 3 ------------------------}
\section{The simple structure on a total space of a fibration}
\label{sec:The_simple_structure_on_a_total_space_of_a_fibration}

In this section we explain how the total space of a fibration inherits
a simple structure from the base space and the fiber.

\begin{definition} \label{def:spider} Let $B$ be a connected $CW$-complex with
  base point $b \in B$.  Denote by $I(B)$ the set of open cells of $B$
  and by $\dim(c)$ the dimension of a cell $c \in I(B)$.  A
  \emph{spider} at $b$ for $B$ is a collection of paths $w_c$ indexed
  by $c \in I(B)$ such that $w_c(0) = b$ and $w_c(1)$ is a point in the
  open cell $c$.
\end{definition}

Let $p\colon E \to B$ be a fibration such that $B$ is a path-connected
finite $CW$-complex and the fiber has the homotopy type of a finite
$CW$-complex.  Given a base point $b \in B$, a spider $s$ at $b$ and a
simple structure $\zeta$ on $F_b$, we want to construct a preferred
simple structure
\begin{eqnarray}\label{eq:simple_structure_on_total_space}
  & \xi(b,s,\zeta) \mbox{ on } E&
  \label{preferred_simple_structure_on_total_space}
\end{eqnarray}
as follows. Let $B_n$ be the $n$-skeleton of $B$ and $E_n =
p^{-1}(B_n)$.  We construct the preferred simple structure on $E_n$
inductively for $n = -1,0,1, \ldots $. The case $n = -1$ is trivial;
the induction step from $(n-1)$ to $n$ is done as follows. Choose a
pushout
$$\xymatrix@!C=6em{\coprod_{I_n} S^{n-1} \ar[r]^-{\coprod_{i \in I_n} q_i} \ar[d]^{} &
  B_{n-1} \ar[d]^{}  \\
  \coprod_{I_n} D^n \ar[r]^-{\coprod_{i \in I_n} Q_i} & B_{n} }$$


Choose for $i \in I_n$ the point $x_i \in D^n - S^{n-1}$ such that
$Q_i(x_i) = w_i(1)$, where $w_i$ is the path with $w_i(0) = b$
associated by the spider $s$ to the cell indexed by $i \in I_n$.  We
get from
Lemma~\ref{lem:fiber_trivializations}~\ref{lem:fiber_trivializations:existence}
a fiber trivialization $T_i\colon F_b \times D^n \to Q_i^*E$.  It
yields a homotopy equivalence of pairs
$$T_i\colon F_b \times (D^n,S^{n-1}) \to (Q_i^*E,q_i^*E).$$
Equip $Q_i^*E$ and $q_i^*E$ with the simple structures induced by $T_i$
from the product simple structure $(\zeta\times \xi_{\can}(D^n))$ on
$F_b \times D^n$ and and $(\zeta\times \xi_{\can}(S^{n-1}))$ on 
$F_b\times S^{n-1}$.

By induction hypothesis we have already constructed a simple structure on
$E_{n-1}$.
Since there is a pushout with a cofibration as
left vertical map (see~\cite[Lemma 1.26]{Lueck(1989)})
$$\xymatrix@!C=6em{\coprod_{I_n} q_i^*E \ar[r]^-{\coprod_{i \in I_n} 
\overline{q_i}} \ar[d]^{} &
    E_{n-1} \ar[d]^{}  \\
    \coprod_{I_n} Q_i^*E\ar[r]^-{\coprod_{i \in I_n} \overline{Q_i}} & E_n }$$
we can equip $E_n$ with the pushout simple structure.
Lemma~\ref{lem:fiber_trivializations}~\ref{lem:fiber_trivializations:uniqueness}
implies that the choice of
$T_i$ does not matter. 

Notice that the choice of the characteristic
maps $(Q_i,q_i)$ does not belong to the structure of a $CW$-complex.
Only the skeletal filtration $(X_n)_{n \ge -1}$
is part of the  structure and the existence
of a pushout as above is required but not specified.
One can recover the open cells by the path-components
of $X_n-X_{n-1}$ and the closed cells by the closure of the open
cells, but not the characteristic maps $Q_i$.
Therefore one has to show that the simple structure on $E_n$ is independent
of the choice of these pushouts. This is done by thickening
$X_{n-1}$ into $X_n$. The details of the argument
are similar to the one given in the proof of
\cite[Lemma 7.13]{Lueck(1989)} and can be found 
in~\cite[Subsection~3.2]{Steimle(2007)}. 

\begin{remark} \label{rem:simple_structure_on_trivial_fibration} If
  $p$ is trivial, i.e., $p\colon B\times F\to B$ is the projection map,
  and $F$ is a finite $CW$-complex, then for any spider $s$, the
  simple structure $\xi(b,s,\xi_{\can}(F))$ on
  $B\times F$ agrees with the product simple structure.
\end{remark}

The dependence of the simple structure on the choice of $(b,s,\zeta)$
is described in the next lemma. Therefore suppose that another choice
$(b',s',\zeta')$ has been made, with $b'\in B$, $s'$ a spider at $b'$,
and $\zeta'$ a simple structure on the fiber $F_{b'}$.

\begin{lemma} \label{lem:dependence_of_simple_structure_on_(b,s,xi(F_b))}
Suppose that $B$ is path-connected. Given a cell $c \in I(B)$, let $u_c$ be any path in
the interior of $c$ from $w_c(1)$ to $w_c'(1)$, where $w_c$ and $w_c'$
are given by the spiders $s$ and $s'$, and let $v_c$ be the path
$w_c \ast u_c \ast (w_c')^-$. Then the homotopy class
relative endpoints $[v_c]$ is independent of $u_c$.
If we denote by $(i_{b'})_*\colon \Wh(\pi(F_{b'})) \to \Wh(\pi(E))$ the homomorphism
induced by the inclusion $i_{b'}\colon F_{b'} \to E$, the following holds in $\Wh(\pi(E))$
\begin{eqnarray*}
\lefteqn{
\tau\big((E,\xi(b,s,\zeta)) \xrightarrow{\id} (E,\xi(b',s',\zeta'))\big)}
& & \\
& \hspace{15mm}  = &
\sum_{c \in I(B)} (-1)^{\dim(c)}\cdot (i_{b'})_*
\tau\big((F_b,\zeta) \xrightarrow{t([v_c])} (F_{b'},\zeta')
\big)
\end{eqnarray*}
\end{lemma}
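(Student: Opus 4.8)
The plan is to reduce everything to the skeletal induction used to build $\xi(b,s,\zeta)$ and $\xi(b',s',\zeta')$, and to read off the torsion of the identity map skeleton by skeleton using the sum formula (Lemma~\ref{lem:properties_of_Whitehead_torsion}~\ref{lem:properties_of_Whitehead_torsion:sum_formula}) and the product formula (Lemma~\ref{lem:properties_of_Whitehead_torsion}~\ref{lem:properties_of_Whitehead_torsion:product_formula}). The independence of $[v_c]$ from $u_c$ is the easy first step: any two choices of $u_c$ differ by a loop in the interior of the cell $c$, which is contractible, so $w_c \ast u_c \ast (w_c')^-$ and $w_c \ast u_c' \ast (w_c')^-$ are homotopic relative endpoints; hence $t([v_c])$ is a well-defined homotopy class of maps $F_b \to F_{b'}$, and by Lemma~\ref{lem:properties_of_Whitehead_torsion}~\ref{lem:properties_of_Whitehead_torsion:homotopy_invariance} its Whitehead torsion is well-defined.

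Next I would set up a common cellular model. Choose a single family of pushouts exhibiting $B_n$ from $B_{n-1}$, with characteristic maps $(Q_i,q_i)$; pick the $x_i \in D^n - S^{n-1}$ so that $Q_i(x_i) = w_i(1)$ and $Q_i(x_i') = w_i'(1)$ for the two spiders (using that the interior of $D^n$ is path-connected, these can be joined inside $D^n-S^{n-1}$, matching the path $u_c$). Using Lemma~\ref{lem:fiber_trivializations}~\ref{lem:fiber_trivializations:existence} choose fiber trivializations $T_i\colon F_b \times D^n \to Q_i^*E$ and $T_i'\colon F_{b'} \times D^n \to Q_i^*E$ with respect to $(b,x_i,w_i)$ and $(b',x_i',w_i')$. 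The two simple structures on $E_n$ are then the pushout simple structures assembled from the $Q_i^*E$ with the structures induced by $T_i$ respectively $T_i'$. The identity map on each $Q_i^*E$, transported back via $T_i$ and $T_i'$, is (fiber-homotopic over $D^n$, hence homotopic as a map of spaces to) a map $F_b \times D^n \to F_{b'} \times D^n$ which on fibers is the fiber transport $t([w_i \ast u_i \ast (w_i')^-]) = t([v_{c_i}])$; here I invoke Lemma~\ref{lem:fiber_trivializations}~\ref{lem:fiber_trivializations:composition}, which governs exactly how two fiber trivializations over a contractible base differ by a fiber transport along the concatenated path. Up to homotopy this map is $t([v_{c_i}]) \times \id_{D^n}$, and similarly on $q_i^*E$ it is $t([v_{c_i}]) \times \id_{S^{n-1}}$.

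Now the computation: by the sum formula applied to the pushout defining $E_n$ from $E_{n-1}$ and the $Q_i^*E$, the torsion of $\id_{E_n}$ between the two simple structures equals the torsion on $E_{n-1}$ (by induction hypothesis) plus $\sum_i \big[(\text{torsion on } Q_i^*E) - (\text{torsion on } q_i^*E)\big]$, all pushed into $\Wh(\pi(E))$. By the product formula, $\tau(t([v_{c_i}]) \times \id_{D^n}) = \chi(D^n)\cdot (i_{b'})_* \tau(t([v_{c_i}])) = (i_{b'})_*\tau(t([v_{c_i}]))$ since $\chi(D^n)=1$ (the other term vanishes because $\tau(\id_{D^n})=0$), and likewise $\tau(t([v_{c_i}]) \times \id_{S^{n-1}}) = \chi(S^{n-1})\cdot(i_{b'})_*\tau(t([v_{c_i}])) = (1 + (-1)^{n-1})\cdot(i_{b'})_*\tau(t([v_{c_i}]))$. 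Subtracting, the contribution of the cell $c_i$ of dimension $n$ is $\big(1 - (1+(-1)^{n-1})\big)\cdot(i_{b'})_*\tau(t([v_{c_i}])) = (-1)^{n}\cdot(i_{b'})_*\tau(t([v_{c_i}]))$, which is the claimed sign $(-1)^{\dim(c)}$. Summing over all cells and all skeleta, and using that each inclusion factors $F_{b'} \hookrightarrow E_n \hookrightarrow E$, gives the formula. The main obstacle I anticipate is purely bookkeeping: carefully identifying, via Lemma~\ref{lem:fiber_trivializations}~\ref{lem:fiber_trivializations:composition} and~\ref{lem:fiber_trivializations:uniqueness}, the transported identity map on $Q_i^*E$ with $t([v_{c_i}])\times\id$ up to homotopy (not merely up to fiber homotopy), and making sure the maps of pushout diagrams required by the sum formula strictly commute — one may have to replace fiber homotopies by honest homotopies and invoke Lemma~\ref{lem:properties_of_Whitehead_torsion}~\ref{lem:properties_of_Whitehead_torsion:homotopy_invariance} to absorb the discrepancy. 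Everything else is the Euler-characteristic arithmetic above.
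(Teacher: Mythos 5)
Your proof is correct and fills in, in detail, exactly what the paper leaves to the reader: the paper's proof consists solely of the citation to Lemma~\ref{lem:properties_of_Whitehead_torsion} and Lemma~\ref{lem:fiber_trivializations}, and your skeleton-by-skeleton computation via the sum and product formulas, combined with Lemma~\ref{lem:fiber_trivializations}~\ref{lem:fiber_trivializations:composition} to identify the two trivializations up to $t([v_c])\times\id$, is the intended argument. The Euler-characteristic bookkeeping $\chi(D^n)-\chi(S^{n-1})=(-1)^n$ is the right source of the sign $(-1)^{\dim(c)}$, and the technical worries you flag at the end are not real obstacles (a fiber homotopy is already a homotopy, and the pushout diagrams commute strictly because all the maps $f_i$ are the identity).
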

\begin{proof}
This follows from Lemma~\ref{lem:properties_of_Whitehead_torsion} and
Lemma~\ref{lem:fiber_trivializations}.
\end{proof}

Let $p\colon E \to B$ be a fibration
whose fiber has the homotopy type of a finite $CW$-complex.
We can assign to it a class
\begin{eqnarray}
\Theta(p) & \in & H^1\bigl(B,\Wh(\pi(E))\bigr)
\label{Theta(p)}
\end{eqnarray}
as follows. For simplicity we assume that $B$ is path-connected.
Given $b \in B$, a loop $w$ at $b$ in $B$ and
a simple structure $\zeta$ on $F_b$, we can compute the 
Whitehead torsion of the fiber transport along $w$
\begin{eqnarray*}
(i_b)_*\tau\big(t([w]) \colon (F_b,\zeta)\to (F_b,\zeta)\big) & \in & \Wh(\pi(E))
\end{eqnarray*}
for $i_b\colon F_b \to E$ the inclusion. From
Lemma~\ref{lem:properties_of_Whitehead_torsion} and
Lemma~\ref{lem:fiber_homotopies} one concludes
that this element is independent of the choice of
$\zeta$ and that we obtain a group homomorphism
$\pi_1(B,b) \to \Wh(\pi(E))$. It
defines an element  $\Theta(p) \in H^1\bigl(B;\Wh(\pi(E))\bigr)$
which is independent of the choice of $b \in B$.

\begin{definition} \label{def:simple_fibration}
Let $p\colon E \to B$ be a fibration whose fiber has the 
homotopy type of a finite $CW$-complex. We call $p$ \emph{simple} if $\Theta(p\vert_C) = 0$
holds for any component $C \in \pi_0(B)$ with respect to the restriction
$E\vert_C \to C$.
\end{definition}

\begin{lemma} \label{lem:fiber_bundles_are_simple}
Let $p\colon E \to B$ be a locally trivial fiber bundle with
a finite $CW$-complex as typical fiber and
paracompact base space. Then it is a simple fibration.
\end{lemma}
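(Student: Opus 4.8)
The plan is to show that the monodromy of a locally trivial fiber bundle acts by homeomorphisms on the fiber, so that the Whitehead torsion of the fiber transport vanishes by the topological invariance of Whitehead torsion. Concretely, fix a component $C$ of $B$; since being simple is checked componentwise we may assume $B = C$ is path-connected, and we must show $\Theta(p) = 0 \in H^1(B;\Wh(\pi(E)))$. By the discussion preceding Definition~\ref{def:simple_fibration}, it suffices to pick a base point $b \in B$, equip the fiber $F_b = p^{-1}(b)$ with some simple structure $\zeta$, and prove that for every loop $w$ at $b$ the element
$$(i_b)_*\tau\bigl(t([w])\colon (F_b,\zeta) \to (F_b,\zeta)\bigr) \in \Wh(\pi(E))$$
vanishes. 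Since $F_b$ is given to have the homotopy type of a finite $CW$-complex, choose a homotopy equivalence $u\colon K \to F_b$ from a finite $CW$-complex $K$ and let $\zeta$ be the simple structure it represents; then the above torsion equals $u_*\tau(u^{-1} \circ t([w]) \circ u)$, so it is enough to see that $t([w])\colon F_b \to F_b$, as a self-homotopy-equivalence of a space with a (hence any) simple structure, has vanishing Whitehead torsion.

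The key point is that the fiber transport of a locally trivial bundle is realized by a homeomorphism. First I would invoke the standard fact (using paracompactness of $B$) that a locally trivial fiber bundle over a paracompact base is a Hurewicz fibration, so that the fiber transport $t([w])$ from Section~\ref{sec:Fibrations} is defined. Next, cover the loop $w\colon [0,1] \to B$ by finitely many trivializing open sets and, by a Lebesgue-number argument, subdivide $[0,1] = [t_0,t_1]\cup \dots \cup [t_{k-1},t_k]$ so that $w([t_{j-1},t_j])$ lies in a set over which $p$ is trivial. Over each such piece the restricted bundle is a product, and the fiber transport along $w|_{[t_{j-1},t_j]}$ is fiber homotopic to the homeomorphism $F_{w(t_{j-1})} \to F_{w(t_j)}$ coming from the two product trivializations — this is the content of Remark~\ref{rem:simple_structure_on_trivial_fibration} together with Lemma~\ref{lem:fiber_trivializations}, which guarantees that a fiber transport over a contractible base is, up to fiber homotopy, independent of choices and compatible with composition. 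Composing these, $t([w])$ is homotopic to a homeomorphism $h\colon F_b \to F_b$.

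Finally, by the homotopy invariance of Whitehead torsion (Lemma~\ref{lem:properties_of_Whitehead_torsion}\ref{lem:properties_of_Whitehead_torsion:homotopy_invariance}) we have $\tau(t([w])) = \tau(h)$, and by the topological invariance of Whitehead torsion (Chapman's theorem, as already cited in Remark~\ref{rem:not-necessarily_cellular}) a homeomorphism of finite $CW$-complexes — equivalently, of a space equipped with any simple structure — has vanishing Whitehead torsion. Hence $(i_b)_*\tau(t([w])) = 0$ for every loop $w$, so the homomorphism $\pi_1(B,b) \to \Wh(\pi(E))$ defining $\Theta(p)$ is trivial and $\Theta(p) = 0$; as this holds on each component, $p$ is simple. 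The main obstacle, and the only part requiring a little care, is the second step: producing from the local triviality an honest homeomorphism of $F_b$ in the fiber-homotopy class of $t([w])$, i.e. correctly tracking the trivializations across the overlaps $t_j$ and checking that the composition law for fiber transport (the relation $t([v\ast w]) = t([w])\circ t([v])$ recorded in Section~\ref{sec:Fibrations}) matches the composition of the patch homeomorphisms up to fiber homotopy; once this bookkeeping is done, the rest is a direct appeal to the cited invariance properties.
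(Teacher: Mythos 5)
Your proposal is correct and follows essentially the same route as the paper's proof: the bundle is a fibration because the base is paracompact, the fiber transport is realized by homeomorphisms, and the Whitehead torsion of a homeomorphism vanishes by Chapman's theorem. You merely spell out in more detail the step the paper asserts without proof, namely that local triviality lets one realize $t([w])$ up to homotopy by a homeomorphism via a subdivision of the loop into trivializing pieces.
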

\begin{proof}
It is a fibration by \cite[page 33]{Whitehead(1978)}. It is
simple, since the fiber transport in such a bundle is given by
homeomorphisms and the Whitehead torsion of a homeomorphism is
trivial (see~\cite{Chapman(1974)}).
\end{proof}

\begin{corollary}
\label{cor:dependence_of_simple_structure_on_(b,s,xi(F_b)_for_simple_p}
Consider the situation of
Lemma~\ref{lem:dependence_of_simple_structure_on_(b,s,xi(F_b))}.
Assume that $p$ is simple. Define
$$\tau_0 := (i_{b'})_*\tau\big(t \colon (F_b,\zeta) \to (F_{b'},\zeta')\big),$$
where $t\colon F_b \to F_{b'}$ is a homotopy equivalence representing
the fiber transport $t([w])$ for some path $w$ from
$b$ to $b'$.

Then $\tau_0$ is independent of the choice of $w$ and
\begin{eqnarray*}
\tau\bigl(\id\colon  (E,\xi(b,s,\zeta)) \to (E,\xi(b',s',\zeta'))\big)
& = &
\chi(B) \cdot \tau_0.
\end{eqnarray*}
\end{corollary}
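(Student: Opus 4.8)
The plan is to derive the corollary directly from Lemma~\ref{lem:dependence_of_simple_structure_on_(b,s,xi(F_b))} by simplifying the sum on its right-hand side under the extra hypothesis that $p$ is simple. First I would observe that, since $B$ is path-connected, I may use the very same path $w$ (from $b$ to $b'$) to compare fiber transports for \emph{every} cell. Concretely, for a cell $c \in I(B)$ choose any path $a_c$ from $w_c(1)$ to $b$ inside $B$ (not necessarily in the interior of $c$; we only need it in $B$), and similarly a path $a'_c$ from $b'$ to $w'_c(1)$. Then the loop $\gamma_c := v_c \ast a'_c{}^- \ast w^- \ast a_c^-$ — or rather a suitable concatenation — is a loop at $w_c(1)$, hence represents an element of $\pi_1(B, w_c(1))$. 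The key input is that $\Theta(p) = 0$, which says precisely that the Whitehead torsion of the fiber transport around any loop, pushed into $\Wh(\pi(E))$, vanishes; combined with the composition formula (Lemma~\ref{lem:properties_of_Whitehead_torsion}~\ref{lem:properties_of_Whitehead_torsion:composition_formula}) and the fact that a fiber transport along a path is always a homotopy equivalence with a homotopy inverse given by the reverse path, this forces
$$
(i_{b'})_*\,\tau\bigl(t([v_c])\colon (F_b,\zeta)\to (F_{b'},\zeta')\bigr)
\;=\;
(i_{b'})_*\,\tau\bigl(t([w])\colon (F_b,\zeta)\to (F_{b'},\zeta')\bigr)
\;=\;\tau_0
$$
for every cell $c$, independently of $c$. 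The same argument with two different choices of $w$ shows $\tau_0$ is well-defined.

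Granting that, Lemma~\ref{lem:dependence_of_simple_structure_on_(b,s,xi(F_b))} gives
$$
\tau\bigl(\id\colon (E,\xi(b,s,\zeta))\to(E,\xi(b',s',\zeta'))\bigr)
=\sum_{c\in I(B)}(-1)^{\dim(c)}\,\tau_0
=\Bigl(\sum_{c\in I(B)}(-1)^{\dim(c)}\Bigr)\tau_0,
$$
and the coefficient $\sum_{c\in I(B)}(-1)^{\dim(c)}$ is, by definition, the Euler characteristic $\chi(B)$ of the finite $CW$-complex $B$. This is exactly the claimed formula.

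The step I expect to be the main obstacle is making the "independent of $c$" claim rigorous: one must carefully compare $t([v_c])\colon F_b \to F_{b'}$ with $t([w])\colon F_b \to F_{b'}$, keeping track of base points of fibers (the paths $v_c$ and $w$ have the same endpoints $b,b'$, but $v_c$ is built from $w_c$, $u_c$, $w'_c$ while $w$ is chosen freely), and then invoke $\Theta(p)=0$ for the loop that measures their difference. The homotopy-invariance and composition properties of fiber transport recorded after Lemma~\ref{lem:fiber_homotopies} (namely $t([v])\circ t([u]) = t([u\ast v])$ and $t(c_b)=\id$) reduce everything to: $[v_c \ast \bar w]$, suitably interpreted as a loop after inserting auxiliary paths, lies in the kernel of $\pi_1(B)\to (\text{self-equivalences of fibers})\to \Wh(\pi(E))$, which is the content of $\Theta(p)=0$. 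Once this is set up correctly, the composition formula of Lemma~\ref{lem:properties_of_Whitehead_torsion}~\ref{lem:properties_of_Whitehead_torsion:composition_formula} turns the difference $\tau(t([v_c])) - \tau(t([w]))$ into the (vanishing) torsion of a fiber-transport automorphism of $F_{b'}$, and we are done. I would also remark that the well-definedness of $\tau_0$ in $w$ is the special case $c=\varnothing$ of the same computation, or more directly follows because two such paths differ by a loop and $\Theta(p)=0$.
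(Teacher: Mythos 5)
Your proposal is correct and is essentially the argument the paper intends (the corollary is stated without proof precisely because it follows from Lemma~\ref{lem:dependence_of_simple_structure_on_(b,s,xi(F_b))} in this way): since $v_c$ and $w$ are both paths from $b$ to $b'$, they differ by a loop, $\Theta(p)=0$ kills the torsion of the fiber transport along that loop after applying $(i_b)_*$, so every summand equals $\tau_0$ and the signs sum to $\chi(B)$. The auxiliary paths $a_c$, $a'_c$ and the loop based at $w_c(1)$ are an unnecessary detour --- $v_c \ast w^-$ is already a loop at $b$ --- but this does not affect the validity of the argument.
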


\begin{notation} \label{not:_xi(E,b,xi(F_b))}
Let $p\colon E \to B$ be a fibration with path-connected finite
$CW$-complex as base space $B$ such the homotopy fiber has the
homotopy type of a finite $CW$-complex. Suppose that $p$ is
simple. Then the simple structure
$\xi(b,s,\zeta)$ of~\eqref{preferred_simple_structure_on_total_space}
is independent of the spider $s$ by
Corollary~\ref{cor:dependence_of_simple_structure_on_(b,s,xi(F_b)_for_simple_p}
and will be denoted briefly by $\xi(b,\zeta)$.
\end{notation}

\begin{corollary} \label{cor:simple_structure_on_E_for_chi(B)_is_0}
Let $p\colon E \to B$ be a fibration
such that $B$ is a path-connected finite $CW$-complex
with $\chi(B) = 0$ and
the fiber has the homotopy type of a finite $CW$-complex.
Suppose that $p$ is simple.
Then $E$ carries a preferred simple structure.
\end{corollary}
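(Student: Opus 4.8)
The plan is to take $\xi(b,\zeta)$ as the preferred simple structure on $E$ and to check that it does not depend on any of the remaining choices. First, since the fiber $F_b$ has the homotopy type of a finite $CW$-complex, it admits at least one simple structure $\zeta$; choosing in addition a base point $b\in B$ (recall $B$ is path-connected) and any spider $s$ at $b$, the construction \eqref{preferred_simple_structure_on_total_space} produces a simple structure $\xi(b,s,\zeta)$ on $E$. Because $p$ is simple, Notation~\ref{not:_xi(E,b,xi(F_b))}, which rests on Corollary~\ref{cor:dependence_of_simple_structure_on_(b,s,xi(F_b)_for_simple_p}, tells us that $\xi(b,s,\zeta)$ is independent of $s$, so it may be written $\xi(b,\zeta)$.

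Next I would compare $\xi(b,\zeta)$ with a second choice $\xi(b',\zeta')$, where $b'\in B$ and $\zeta'$ is a simple structure on $F_{b'}$. Applying Corollary~\ref{cor:dependence_of_simple_structure_on_(b,s,xi(F_b)_for_simple_p} (with arbitrary spiders $s$ at $b$ and $s'$ at $b'$, and using that $p$ is simple so that the right-hand side $\tau_0$ is independent of the connecting path) gives
$$\tau\bigl(\id\colon (E,\xi(b,\zeta)) \to (E,\xi(b',\zeta'))\bigr) \;=\; \chi(B)\cdot\tau_0 .$$
Since $\chi(B)=0$ by hypothesis, the right-hand side vanishes, so $\id\colon E\to E$ is a simple homotopy equivalence from $(E,\xi(b,\zeta))$ to $(E,\xi(b',\zeta'))$; i.e.\ the two simple structures coincide. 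Hence $\xi(b,\zeta)$ is independent of $(b,\zeta)$, and we declare this common simple structure to be the preferred one on $E$.

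There is essentially no obstacle here: the statement is a direct consequence of Corollary~\ref{cor:dependence_of_simple_structure_on_(b,s,xi(F_b)_for_simple_p}. The only points to keep in mind are that simplicity of $p$ must be used twice — once to eliminate the dependence on the spider (Notation~\ref{not:_xi(E,b,xi(F_b))}) and once to replace the cell-by-cell sum of Lemma~\ref{lem:dependence_of_simple_structure_on_(b,s,xi(F_b))} by the clean expression $\chi(B)\cdot\tau_0$ in Corollary~\ref{cor:dependence_of_simple_structure_on_(b,s,xi(F_b)_for_simple_p} — and that the product and sum formulas of Lemma~\ref{lem:properties_of_Whitehead_torsion} feeding into those results carry the Euler-characteristic weighting, which is precisely why the hypothesis $\chi(B)=0$ is the right one.
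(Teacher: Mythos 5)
Your proposal is correct and follows exactly the route the paper intends: the corollary is stated as a direct consequence of Corollary~\ref{cor:dependence_of_simple_structure_on_(b,s,xi(F_b)_for_simple_p} and Notation~\ref{not:_xi(E,b,xi(F_b))}, and the paper gives no further proof because the observation that $\chi(B)\cdot\tau_0 = 0$ eliminates the last remaining dependence (on $b$ and $\zeta$) is immediate. Your write-up makes this implicit argument explicit and correct.
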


The next three lemmas describe the extent of compatibility of our construction
with fiber homotopy equivalences, pushouts, and pullbacks by simple
homotopy equivalences.

\begin{lemma} \label{lem:simple_structure_and_fiber_homotopy_equivalences}
Let $p\colon E \to B$ and $p'\colon E' \to B$ be fibrations and
$(\overline{f},\id)\colon p \to p'$ be a fiber homotopy equivalence.
Let $b \in B$ be a base point and let $s$ be a spider at $b$.
Fix simple structures $\zeta$ and $\zeta'$
on the fibers $F_b$ and $F'_b$ of $p$ and $p'$ over $b$.
Let $\overline{f}_b\colon F_b \to F'_b$ be the homotopy equivalence induced by
$\overline{f}$. From $\overline{f}$ we obtain an isomorphism
$\overline{f}_*\colon H^1\bigl(B,\Wh(\pi(E))\bigr) 
\xrightarrow{\cong} H^1\bigl(B;\Wh(\pi(E'))\bigr)$.

Then we get
\begin{eqnarray*}
\tau\big(\overline{f} \colon (E,\xi(b,s,\zeta))
\to (E',\xi(b,s,\zeta'))\big) & = &
\chi(B) \cdot (i_b)_*\tau
\big(\overline{f}_b\colon (F_b,\zeta) \to (F_b',\zeta')\big);
\\
\Theta(p') & = & \overline{f}_*(\Theta(p)).
\end{eqnarray*}
\end{lemma}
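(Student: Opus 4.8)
The plan is to prove the two displayed formulas by working skeleton‑by‑skeleton and applying the sum and product formulas from Lemma~\ref{lem:properties_of_Whitehead_torsion}. First I would set up the inductive comparison. Write $B_n$ for the $n$‑skeleton, $E_n = p^{-1}(B_n)$, $E'_n = p'^{-1}(B_n)$, and restrict $\overline f$ to a fiber homotopy equivalence $\overline f_n \colon E_n \to E'_n$. The simple structures $\xi(b,s,\zeta)$ and $\xi(b,s,\zeta')$ are built inductively via the pushouts displayed in Section~\ref{sec:The_simple_structure_on_a_total_space_of_a_fibration}: $E_n$ is the pushout of $E_{n-1} \leftarrow \coprod_{I_n} q_i^*E \to \coprod_{I_n} Q_i^*E$, and similarly for $E'_n$. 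Since $\overline f$ is a map of fibrations over $B$, it is compatible with these pushouts up to the obvious homotopies, so the sum formula Lemma~\ref{lem:properties_of_Whitehead_torsion}~\ref{lem:properties_of_Whitehead_torsion:sum_formula} gives
\[
\tau(\overline f_n) = (j_1)_*\tau\Bigl(\textstyle\coprod_{I_n}\overline f|_{Q_i^*E}\Bigr) + (j_2)_*\tau(\overline f_{n-1}) - (j_0)_*\tau\Bigl(\textstyle\coprod_{I_n}\overline f|_{q_i^*E}\Bigr),
\]
where the $j$'s are the structural inclusions into $E'_n$.

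The next step is to evaluate the torsion over a single cell. On $Q_i^*E$ and $q_i^*E$ the simple structures are, by construction, transported via the fiber trivializations $T_i, T_i'$ from the product structures on $F_b \times D^n$ and $F_b \times S^{n-1}$ (resp.\ $F_b' \times D^n$, $F_b' \times S^{n-1}$). Under these identifications $\overline f|_{Q_i^*E}$ becomes, up to homotopy, $\overline f_b \times \id_{D^n}$, and likewise over $S^{n-1}$; this uses the naturality clause of Lemma~\ref{lem:fiber_trivializations} (uniqueness of trivializations up to fiber homotopy) to match the trivializations for $p$ and $p'$ through $\overline f$. Now the product formula Lemma~\ref{lem:properties_of_Whitehead_torsion}~\ref{lem:properties_of_Whitehead_torsion:product_formula} applies: since $\chi(D^n)=1$ and $\tau(\id_{D^n})=0$, we get $\tau(\overline f_b \times \id_{D^n}) = (i_b)_*\tau(\overline f_b)$ pushed into the total space, and similarly $\tau(\overline f_b \times \id_{S^{n-1}}) = \chi(S^{n-1})\cdot(i_b)_*\tau(\overline f_b) = (1+(-1)^{n-1})\cdot(i_b)_*\tau(\overline f_b)$. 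Feeding this into the sum formula and using $\chi(B) = \sum_n (-1)^n \#I_n$, the telescoping over all cells and all skeleta collects the coefficient into exactly $\chi(B)\cdot(i_b)_*\tau(\overline f_b \colon (F_b,\zeta)\to(F_b',\zeta'))$, which is the first claimed identity. One must keep track of the fact that all the $\tau(\overline f_b)$ contributions are being pushed forward by homotopic inclusions $F_b \hookrightarrow E'$, so they land in the same element of $\Wh(\pi(E'))$; this is where Lemma~\ref{lem:properties_of_Whitehead_torsion}~\ref{lem:properties_of_Whitehead_torsion:homotopy_invariance} is used silently.

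For the second identity $\Theta(p') = \overline f_*(\Theta(p))$, I would argue directly from the definition of $\Theta$ in~\eqref{Theta(p)}. For a loop $w$ at $b$, the fiber transport for $p'$ along $w$ is fiber homotopic to $\overline f_b \circ (\text{fiber transport for }p) \circ \overline f_b^{-1}$, because $\overline f$ covers $\id_B$ and fiber transport is natural in maps of fibrations (Lemma~\ref{lem:fiber_homotopies}, together with the remark following it identifying the induced fiber maps with fiber transport along $H(x,-)$). Hence, using the composition formula Lemma~\ref{lem:properties_of_Whitehead_torsion}~\ref{lem:properties_of_Whitehead_torsion:composition_formula} and the fact that $\tau(\overline f_b^{-1}) = -\,(\overline f_b^{-1})_*\tau(\overline f_b)$, the Whitehead torsions of the two fiber transports differ by a term that is a coboundary (it depends on $\overline f_b$ alone, not on $w$), so they define the same cohomology class after applying the isomorphism $\overline f_*$ on $H^1$. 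The main obstacle will be the bookkeeping in the first part: verifying carefully that $\overline f$ respects the cellular pushouts defining the simple structures — i.e.\ that one can choose the lifts and trivializations $T_i, T_i'$ compatibly with $\overline f$ up to fiber homotopy — so that the sum formula genuinely applies term by term. Once that compatibility is in place, everything else is the product/sum formula arithmetic and an Euler‑characteristic count.
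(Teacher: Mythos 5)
Your proposal is correct and is essentially the detailed unwinding of the paper's one-line proof, which simply cites Lemma~\ref{lem:properties_of_Whitehead_torsion} and Lemma~\ref{lem:fiber_trivializations}: you induct over skeleta using the sum formula on the defining pushouts, evaluate each cell contribution via the fiber trivializations and the product formula to get a $(-1)^n$-weighted copy of $(i_b)_*\tau(\overline{f}_b)$, and then collect coefficients into $\chi(B)$; the second identity follows from the composition formula applied to the homotopy-commuting square relating the two fiber transports through $\overline{f}_b$, with the extra $\tau(\overline{f}_b)$-terms cancelling after pushing into $\Wh(\pi(E'))$. One minor point: what you call the ``naturality clause'' of Lemma~\ref{lem:fiber_trivializations} is not literally stated there --- you need first to observe that $(Q_i^*\overline{f})\circ T_i\circ(\overline{f}_b^{-1}\times\id)$ is itself a fiber trivialization for $p'$ (which uses the compatibility of fiber transport with maps of fibrations over $\id_B$) before the uniqueness clause applies --- but this is exactly the intended reading and the argument goes through.
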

\begin{proof}
This follows from Lemma~\ref{lem:properties_of_Whitehead_torsion} and
Lemma~\ref{lem:fiber_trivializations}.
\end{proof}

For the next lemma, the following extension of the notion of a spider from spaces to maps will be useful: 

\begin{definition} \label{def:spider_for_f} Let $X$ be a $CW$-complex, $f\colon
  X\to B$ a map to a path-connected space, $b\in B$. A \emph{spider} at $b$
  for $f$ is a collection of paths $w_c$ in $B$, indexed by $c \in I(X)$, such
  that $w_c(0) = b$ and $w_c(1)$ is the image under $f$ of a point in the open
  cell $c$.
\end{definition}

\begin{remark}
\label{rem:spiders_for_maps} 
Recall that the choice of a spider is, in general, a necessary ingredient to
construct a simple structure on the total space of a fibration. This
construction can now be generalized: Let $p\colon E\to B$ be a fibration over a
path-connected space $B$, let $X$ be a finite $CW$-complex, and let $f\colon X\to B$ be a
map. Given $b\in B$, a spider $s$ for $f$ at $b$, and a simple structure $\zeta$
on the fiber $F_b$ �of $p$ over $b$, the construction 
of~\eqref{eq:simple_structure_on_total_space} generalizes to the construction of a
simple structure $\xi(f,b,s,\zeta)$ on $f^*E$. (Just use the fiber transport
over $B$ instead of the fiber transport over $X$.) Of course this is the previous 
construction~\eqref{eq:simple_structure_on_total_space} in the
special case $B = X$ and $f = \id_B$.
\end{remark}

This construction will be used in the following situation: Let \xycomsquare{B_0}{\phi}{B_1}{i}{j}{B_2}{\Phi}{B} be a pushout
of finite $CW$-complexes, such that $B$ is connected, $(B_2,B_0)$ and $(B,B_1)$ are
$CW$-pairs, the maps $i$ and $j$ are inclusions, and $B$ is obtained
as a $CW$-complex from $B_1$ by attaching the relative cells of
$(B_2,B_0)$. 

Suppose that $p\colon E\to B$ is a simple fibration. Let $b\in B$, let $\zeta$ be a simple structure on the fiber $F_b$, and denote by $\xi=\xi(b,\zeta)$ the associated simple structure on $E$.

Choose any spider $s_1$ for $j$ at $b$, any spider $s_2$ for $\Phi$ at $b$, and any spider $s_0$ for $j\circ\phi$ at $b$. Thus we obtain a simple structure $\xi_2=\xi(\Phi,b,s_2,\zeta)$ on $\Phi^*E$, and similarly simple structures $\xi_0$ and $\xi_1$ on the corresponding spaces in the following diagram:
\begin{equation}\label{eq:simple_pushout_square}
\xymatrix{
{\bigl(\phi^*(E\vert_{B_1}),\xi_0\bigr)} \ar[r]^{\overline{\phi}} \ar[d]^{\overline{i}} & {\bigl(E\vert_{B_1},\xi_1\bigr)} \ar[d]^{\overline{j}}\\
  {\bigl(\Phi^*E,\xi_2\bigr)}\ar[r]^{\overline{\Phi}} & {(E,\xi)}
}
\end{equation}

\begin{lemma}\label{lem:compatibility_of_simple_structure_with_pushouts}
Under the assumptions above the square \eqref{eq:simple_pushout_square} is a simple pushout, i.e., the pushout simple structure on $E$
  agrees with $\xi$. 
\end{lemma}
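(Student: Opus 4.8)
The plan is to reduce the statement to the sum formula for Whitehead torsion (Lemma~\ref{lem:properties_of_Whitehead_torsion}\ref{lem:properties_of_Whitehead_torsion:sum_formula}) applied to the square \eqref{eq:simple_pushout_square}, using a skeletal filtration of $B$ adapted to the pushout. First I would fix, once and for all, a skeletal filtration of $B$ in which $B_1$ appears as a subcomplex and the higher skeleta are built from $B_1$ by attaching, in increasing order of dimension, exactly the relative cells of $(B_2,B_0)$; this is possible by the hypothesis that $B$ is obtained from $B_1$ by attaching the relative cells of $(B_2,B_0)$, and the characteristic maps of those relative cells can be chosen to come from characteristic maps for $(B_2,B_0)$ composed with $\Phi$. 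Call $\xi_{\mathrm{po}}$ the pushout simple structure on $E$ coming from the square \eqref{eq:simple_pushout_square} (with $\xi_0,\xi_1,\xi_2$ on the other corners). The goal is $\xi_{\mathrm{po}}=\xi$.

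The key step is to identify both $\xi$ and $\xi_{\mathrm{po}}$ with simple structures built cell-by-cell over the chosen filtration of $B$, and to see that at each cell the local fiber-trivialization data agree up to simple homotopy equivalence. For a cell $c$ of $B$ lying in $B_1$, the contribution to $\xi_{\mathrm{po}}$ is, by construction of the pushout simple structure on $E\vert_{B_1}$ side, exactly the contribution of $c$ to $\xi(b,\zeta)$ computed inside $E\vert_{B_1}\subseteq E$: the fiber trivialization $T_c\colon F_b\times D^{\dim c}\to Q_c^*E$ used in $\xi_1$ is the restriction of the one used for $\xi$, since both use the fiber transport of $p$ over $B$ along the spider path $w_c$. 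For a cell $c$ of $B$ coming from a relative cell of $(B_2,B_0)$, the contribution to $\xi_{\mathrm{po}}$ comes from $\xi_2=\xi(\Phi,b,s_2,\zeta)$, which by Remark~\ref{rem:spiders_for_maps} is built from the fiber transport of $p$ over $B$ along the spider path $w_c$ for $\Phi$; again this is the same fiber-transport data used to build $\xi$ at the cell $\Phi(c)$. Since $p$ is simple, Corollary~\ref{cor:dependence_of_simple_structure_on_(b,s,xi(F_b)_for_simple_p} (and Notation~\ref{not:_xi(E,b,xi(F_b))}) tells us the spider choices $s_0,s_1,s_2$ are immaterial, so in each case we may as well use spider paths compatible with the global spider defining $\xi=\xi(b,\zeta)$. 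Then Lemma~\ref{lem:fiber_trivializations}\ref{lem:fiber_trivializations:uniqueness} gives that the local simple structures on $Q_c^*(E\vert_{B_i})$ agree with the restrictions of the corresponding pieces of $\xi$.

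Having matched the cell-wise data, I would run the induction over the skeleta $B_n$ of the chosen filtration, showing $\xi_{\mathrm{po}}\vert_{E_n}=\xi\vert_{E_n}$: the base case $n=-1$ is trivial, and the induction step is precisely an instance of the sum formula for the pushout expressing $E_n$ from $E_{n-1}$ by attaching the $F_b\times D^n$'s, where the two structures being compared have identical pieces on all three non-terminal corners by the previous paragraph, hence identical pushout simple structures on $E_n$. Taking $n$ large gives $\xi_{\mathrm{po}}=\xi$ on $E$. An equivalent and slightly cleaner packaging is to apply Lemma~\ref{lem:properties_of_Whitehead_torsion}\ref{lem:properties_of_Whitehead_torsion:sum_formula} directly to $\id\colon(E,\xi_{\mathrm{po}})\to(E,\xi)$ via the square \eqref{eq:simple_pushout_square}: the three maps $\overline{\phi^*(E\vert_{B_1})}$, $\overline{E\vert_{B_1}}$, $\overline{\Phi^*E}$ are identities between a pushout piece with structure $\xi_i$ and the same piece with the restriction of $\xi$, and by the cell-wise matching (invoking Lemma~\ref{lem:dependence_of_simple_structure_on_(b,s,xi(F_b))} to absorb the harmless spider discrepancies, which contribute zero because $p$ is simple) each of these has vanishing Whitehead torsion; the sum formula then forces $\tau(\id\colon(E,\xi_{\mathrm{po}})\to(E,\xi))=0$.

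The main obstacle I anticipate is the bookkeeping in the first identification step: one must choose the characteristic maps and spiders for the filtration of $B$ so that the fiber-trivialization used for a relative cell of $(B_2,B_0)$ inside $\Phi^*E$ genuinely restricts — not just is fiber-homotopic after the fact — to the one used for the corresponding cell of $B$ inside $E$, and similarly that the attaching maps $\overline{q_c}$ into $E_{n-1}$ are compatible on the $B_1$-part and the $B_2$-part so that the two pushout descriptions of $E_n$ literally coincide. This is where the hypotheses that $i,j$ are inclusions of $CW$-pairs and that $B$ is literally $B_1$ with the relative cells of $(B_2,B_0)$ attached are used; once the combinatorics of the two cell structures are aligned, the analytic input is entirely Lemma~\ref{lem:fiber_trivializations}, and the simple-homotopy bookkeeping is entirely Lemma~\ref{lem:properties_of_Whitehead_torsion}.
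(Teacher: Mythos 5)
Your overall strategy matches the paper's: reduce to a single aligned spider (justified because $p$ is simple), then compare the two simple structures by induction over the relative skeleta of $(B,B_1)$ using the sum formula, matching fiber trivializations cell by cell. Two points, however, need attention.

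First, the ``equivalent and slightly cleaner packaging'' via the sum formula is circular. Lemma~\ref{lem:properties_of_Whitehead_torsion}~\ref{lem:properties_of_Whitehead_torsion:sum_formula} requires \emph{both} the source and the target to carry the pushout simple structure of their respective corner data; in your setup this means you would already have to know that $(E,\xi)$ is the pushout simple structure of the restrictions of $\xi$ to the corners of \eqref{eq:simple_pushout_square}, which is precisely what the lemma asserts. So this shortcut cannot replace the induction.

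Second, the induction step needs more care than ``the corner pieces match.'' You write $\xi\vert_{E_n}$ and treat it as the pushout simple structure for the pushout expressing $E_n$ from $E_{n-1}$; but $\xi$ is constructed by the \emph{skeletal} filtration of $B$ (equivalently, by Remark~\ref{rem:spiders_for_maps} for a map from a finite $CW$-complex), while your $B_n$'s are the relative skeleta of $(B,B_1)$, which interleave differently with the cells of $B_1$. Some version of filtration-independence is needed to identify the structure on $E_n$ that you obtain in your induction with the restriction of $\xi$. The paper handles this by splitting \eqref{eq:simple_pushout_square} into two stacked squares: the upper one is the inductive hypothesis for $(B_2^{(n-1)},B_0)$, and the lower one --- attaching only the top-dimensional relative cells --- is claimed to follow directly from the construction of the simple structure; the argument is thereby pinned to a single cell-attachment step that mirrors the construction, rather than to a restriction of $\xi$ along a non-skeletal filtration. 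Also note that, strictly speaking, there is no ``skeletal filtration of $B$ in which $B_1$ appears as a subcomplex'' unless all cells of $B_1$ have dimension below those of $(B_2,B_0)$; what you want (and what the paper uses) is the filtration of $B$ by the relative skeleta $B^{(n)}$ of $(B,B_1)$, which is not a skeletal filtration of $B$. Finally, the paper makes the useful observation that with the aligned spider the statement holds even for non-simple $p$; simplicity is only used to discard the spider dependence. This strengthening is not needed for the lemma as stated, but it cleanly separates the two roles the spider plays.
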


\begin{proof}
As $p$ is simple, we can use any spider for $B$ to construct $\xi$. We will make use of the following construction of a spider $s$ for $B$ at $b$ out of $s_1$ and $s_2$: By hypothesis, the set of open cells $I(B)$ of $B$ is a disjoint union 
\[I(B)=I(B_1)\coprod I(B_2,B_0)\]
of the set of open cells of $B_1$ and the set of open cells of the relative $CW$-complex $(B_2,B_0)$. So we can just take the two collections of paths given by $s_1$ and $s_2$ together to obtain a spider $s$ for $B$. More precisely, given an open cell $c$ of $B$, define $s$ by letting $w_c$ be the corresponding path of $s_1$, resp.~$s_2$, if $c\in I(B_1)$, resp.~$c\in I(B_2,B_0)\subset I(B_2)$.

We claim that even if $p$ is non-simple, the square \eqref{eq:simple_pushout_square} is a simple pushout provided we use this particular spider $s$ to construct $\xi=\xi(b,s,\zeta)$.

To prove this claim, we proceed by induction over the dimension of the relative
$CW$-complex $(B_2,B_0)$. If its dimension is $n$, let $B_2^{(n-1)}$ and $B^{(n-1)}$ be the relative $(n-1)$-skeleta. Denote by $i' \colon B_0 \to B_2^{(n-1)}$ and 
$i'' \colon B_2^{(n-1)} \to B_2$ the inclusions, and let $j'\colon B_1\to B^{(n-1)}$ and $j''\colon B^{(n-1)}\to B$ be the corresponding inclusions of subcomplexes of $B$. Denote by $\Phi' \colon
B_2^{(n-1)} \to B^{(n-1)}$ the the restriction of $\Phi$.  We obtain the following commutative
diagram
$$\xymatrix{
\phi^*(E\vert_{B_1}) \ar[r]_{\overline{\phi}} \ar[d]_{\overline{i'}}
& 
E\vert_{B_1} \ar[d]_{\overline{j'}}
\\
(\Phi')^*(E\vert_{B^{(n-1)}}) \ar[r]_{\overline{\Phi'}} \ar[d]_{\overline{i''}}
& 
E\vert_{B^{(n-1)}} \ar[d]_{\overline{j''}}
\\
\Phi^*E \ar[r]_{\overline{\Phi}} & E
}
$$

Notice that, by restricting $s$ and $s_2$, we obtain spiders on $B^{(n-1)}$ and $B_2^{(n-1)}$ for $j''$ and $j''\circ\Phi'$ at $b$ and hence can endow all the spaces of the diagram with simple structures. 

We have to show that for the outer square
the simple structure on $E$ agrees with the pushout simple structure.
One easily checks that it suffices to show that the corresponding
statement holds for the upper and the lower square. This is true for the
upper square by induction hypothesis. For the lower square this is a direct consequence of the 
construction of the simple structure on $E$.
\end{proof}

\begin{lemma}\label{lem:compatibility_of_simple_structure_with_simple_pullback}
  Let $f\colon B' \to B$ be map of finite $CW$-complexes. Let $p\colon
  E \to B$ be a fibration whose homotopy fiber has the homotopy type
  of a finite $CW$-complex. Suppose that $p$ is simple.  Suppose that
  $f\colon B' \to B$ is a simple homotopy equivalence. Let
  \xycomsquare{f^*E}{\overline{f}}{E}{p_f}{p}{B'}{f}{B} be the
  pullback. For every component $C \in \pi_0(B')$ choose a base point
  $x(C)$.  For every $C \in \pi_0(B')$ equip the fiber $(p_f)^{-1}(x(C))$
  with a simple structure $\zeta'_C$ and the fiber
  $p^{-1}\bigl(f(x(C)\bigr)$ with  a simple structure $\zeta_C$
  such that 
  \[
  \tau\bigg(\overline{f}|_{(p_f)^{-1}(x(C))} \colon 
  \bigl((p_f)^{-1}(x(C)),\zeta'_C\bigr)
    \to \bigl(p^{-1}(f(x(C))),\zeta_C\bigr)\biggr) = 0.
  \]

  Equip $f^*E$ and $E$ with the simple structures $\xi'$ and $\xi$ 
  associated to these choices in Notation~\ref{not:_xi(E,b,xi(F_b))}. Then
  $$\tau\big(\overline{f} \colon (f^*E,\xi')\to
  (E,\xi)\big) = 0.$$
  \end{lemma}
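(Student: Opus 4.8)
The plan is to reduce, via the composition formula for Whitehead torsion (Lemma~\ref{lem:properties_of_Whitehead_torsion:composition_formula}) together with Whitehead's description of simple homotopy equivalences as composites of elementary expansions and collapses (\cite{Cohen(1973)}), to the case that $f$ is an elementary expansion, and to treat that case by means of Lemma~\ref{lem:compatibility_of_simple_structure_with_pushouts} and the sum and product formulas. Three preliminary remarks will be used throughout. First, the Whitehead group of (the fundamental groupoid of) a space is the direct sum over its path components, and $f$ induces a bijection on $\pi_0$, so it suffices to treat the case that $B$, and hence $B'$, is path-connected. Second, for any map $g\colon C'\to C$ the canonical map $\overline g\colon g^*E\to E$ restricts over each point $c'\in C'$ to the canonical homeomorphism $(g^*E)_{c'}\xrightarrow{\cong}E_{g(c')}$; in particular the hypothesis on the fibre simple structures in the Lemma just says that $\zeta_C$ is the image of $\zeta'_C$ under this homeomorphism, so the two fibre structures may be treated as one. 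Third, since $p$ is simple, Corollary~\ref{cor:dependence_of_simple_structure_on_(b,s,xi(F_b)_for_simple_p} shows that the structure $\xi(b,\zeta)$ of Notation~\ref{not:_xi(E,b,xi(F_b))} is unchanged when $b$ is moved along a path $w$ and $\zeta$ is replaced by its transport $t([w])_*\zeta$; this ``transport invariance'' will let me relocate base points freely. The same applies to every fibration obtained from $p$ by restriction or pullback, since these are again simple and their homotopy fibres are again homotopy finite.

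The engine of the argument is the case that $B$, and hence $B'$, is contractible. Then $p$ is fibre homotopy trivial by Lemma~\ref{lem:fiber_trivializations}~\ref{lem:fiber_trivializations:existence}; I fix a fibre trivialization $T\colon F_b\times B\to E$ whose restriction over $b$ is homotopic to $\id_{F_b}$, and set $T'=f^*T\colon F_b\times B'\to f^*E$, so that $\overline f\circ T'=T\circ(\id_{F_b}\times f)$. By Remark~\ref{rem:simple_structure_on_trivial_fibration} the structures $\xi$ and $\xi'$ pull back along $T$ and $T'$ to the product structures $\zeta\times\xi_{\can}(B)$ and $\zeta\times\xi_{\can}(B')$, and by Lemma~\ref{lem:simple_structure_and_fiber_homotopy_equivalences} the maps $T$ and $T'$ have vanishing Whitehead torsion with respect to these product structures and $\xi$, $\xi'$ (the torsion being $\chi(B)$, resp.\ $\chi(B')$, times the torsion of an identity map of fibres). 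Hence, by the composition formula and the product formula (Lemma~\ref{lem:properties_of_Whitehead_torsion:composition_formula} and~\ref{lem:properties_of_Whitehead_torsion:product_formula}),
\[
\tau(\overline f)=T_*\,\tau(\id_{F_b}\times f)=T_*\bigl(\chi(F_b)\cdot(i_2)_*\tau(f)+\chi(B)\cdot(i_1)_*\tau(\id_{F_b})\bigr)=0,
\]
since $\tau(f)=0$ and $\tau(\id_{F_b})=0$.

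Next I would treat the case that $f\colon B'\hookrightarrow B$ is an elementary expansion, say $B=B'\cup(e^{n-1}\cup e^n)$ with $(e^{n-1},\overline{e^{n-1}})$ a free face of $(e^n,\overline{e^n})$. Writing $D^{n-1}_-\subset\partial D^n$ for the hemisphere carrying the attaching data, $B$ is the pushout of $B'\xleftarrow{\psi}D^{n-1}_-\hookrightarrow D^n$, and the hypotheses of Lemma~\ref{lem:compatibility_of_simple_structure_with_pushouts} hold: $(D^n,D^{n-1}_-)$ and $(B,B')$ are $CW$-pairs, and $B$ arises from $B'$ by attaching the two relative cells of $(D^n,D^{n-1}_-)$. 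Choosing the spiders there compatibly — the one for $D^{n-1}_-$ the restriction of the one for $D^n$, and the one for $B'\hookrightarrow B$ made of paths lying inside $B'$, so that the resulting structure on $E|_{B'}$ is $\xi'$ — that lemma identifies $\xi$ with the pushout simple structure of the square obtained by pulling $E$ back to $B'$, $D^{n-1}_-$ and $D^n$. Applying the sum formula (Lemma~\ref{lem:properties_of_Whitehead_torsion:sum_formula}) to the map from the trivial pushout $(D^{n-1}_-)^*E\xleftarrow{\id}(D^{n-1}_-)^*E\xrightarrow{\overline\psi}E|_{B'}$ to this square then produces $\overline f$ as the induced map, with two of the three torsion terms zero (torsions of identity maps) and the third equal to $\tau(\overline\iota)$ for $\iota\colon D^{n-1}_-\hookrightarrow D^n$ (with the pulled-back structures $\xi_0$ and $\xi_2$ on source and target). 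Since $D^n$ is contractible, $\iota$ is a simple homotopy equivalence, and $\xi_0$, $\xi_2$ are of the form $\xi(b,\zeta)$ for compatible choices — here the simplicity of $(D^n)^*E$ and transport invariance are used to match the constructed structures with Notation~\ref{not:_xi(E,b,xi(F_b))} — the contractible case just proved gives $\tau(\overline\iota)=0$, hence $\tau(\overline f)=0$.

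Finally the general case. If $f\colon B'\to B$ is an elementary collapse, so $B\subset B'$ and $f|_B=\id_B$, then $(f^*E)|_B=E$ and the inclusion $k\colon E\hookrightarrow f^*E$ is precisely the canonical map for the elementary expansion $B\hookrightarrow B'$ and the fibration $f^*E\to B'$; the expansion case gives $\tau(k)=0$, and $\overline f\circ k=\id_E$ then forces $\tau(\overline f)=0$ by the composition formula. For an arbitrary simple homotopy equivalence $f$, Whitehead's theorem (\cite{Cohen(1973)}) writes $f$, up to homotopy, as a finite composite $g_m\circ\dots\circ g_1$ of elementary expansions and collapses; pulling $E$ back step by step yields $f^*E=g_1^*\cdots g_m^*E$ and $\overline f\simeq\overline{g_m}\circ\dots\circ\overline{g_1}$, and equipping the intermediate total spaces with the structures $\xi(\cdot,\cdot)$ attached (via transport invariance) to suitably placed base points, the composition formula together with the two cases already settled gives $\tau(\overline f)=0$. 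The step I expect to cost the most work is the bookkeeping just alluded to: checking at each stage that the ``constructed'' simple structures — the $\xi_i$ of Lemma~\ref{lem:compatibility_of_simple_structure_with_pushouts}, the fibre-trivialized product structures over the cell disks, and the intermediate structures along the formal-deformation chain — agree with the Notation~\ref{not:_xi(E,b,xi(F_b))} structures for base points and fibre structures that are genuinely matched up along all the relevant maps, which one verifies using Lemma~\ref{lem:dependence_of_simple_structure_on_(b,s,xi(F_b))}, Corollary~\ref{cor:dependence_of_simple_structure_on_(b,s,xi(F_b)_for_simple_p}, and the simplicity of the fibrations involved.
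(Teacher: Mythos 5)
Your proof is correct and follows essentially the same route as the paper's: reduce by the composition formula to the case of an elementary expansion, invoke Lemma~\ref{lem:compatibility_of_simple_structure_with_pushouts} to localize to the attached cell, trivialize the fibration over the contractible disk via Lemma~\ref{lem:simple_structure_and_fiber_homotopy_equivalences} and Remark~\ref{rem:simple_structure_on_trivial_fibration}, and finish with the product formula. The only differences are presentational (you isolate the contractible-base case up front and spell out the collapse step and the sum-formula bookkeeping, which the paper leaves implicit).
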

  \begin{proof} Recall that a map is a simple homotopy equivalence if it
  is, up to homotopy, a composition of elementary collapses and expansions.  Because of
  Lemma~\ref{lem:properties_of_Whitehead_torsion} we can assume
  without loss of generality that $f\colon B' \to B$ is an elementary
  expansion, i.e., $f$ is the inclusion of a $CW$-subcomplex, where
  $B$ is obtained from $B'$ as a pushout $B=D^{n+1}\cup_{D^n} B'$,
  with an attaching map $D^n\to B'$ mapping into the $n$-skeleton and
  its restriction onto $S^{n-1}$ mapping into the
  $(n-1)$-skeleton. The inclusion of $D^n$ into $D^{n+1}$ is given by
  identifying $D^n$ with the upper hemisphere of $S^n$.

  By Lemma~\ref{lem:compatibility_of_simple_structure_with_pushouts},
  it is enough to show that the inclusion from $E\vert_{D^n}$ to
  $E\vert_{D^{n+1}}$ is simple. The base spaces in these fibrations
  are contractible; hence we can assume by
  Lemma~\ref{lem:simple_structure_and_fiber_homotopy_equivalences}
  that the fibrations are products. In that case, by 
  Remark~\ref{rem:simple_structure_on_trivial_fibration},
  the simple structures
  on the total spaces are the product simple structures, and by the
  product formula the claim follows from the fact that the inclusion
  $D^n\to D^{n+1}$ is simple.
\end{proof}

\begin{remark} \label{rem:simple_structures_on_B_and_F-yield_one_on_E}
Let $p\colon E\to B$ be a simple fibration over a path-connected base
space $B$ carrying a simple structure $\eta$, and suppose we are
given a simple structure $\zeta$ of the fiber over a point $b$.
Notice that
Lemma~\ref{lem:compatibility_of_simple_structure_with_simple_pullback}
gives us the possibility to define a simple structure on the total
space $E$: Choose a finite $CW$-model $f\colon X\to B$
representing $\eta$ and consider the pullback structure map
$\overline{f}\colon f^*E\to E$. We can arrange by
possibly changing $f$ up to homotopy that 
$b=f(x)$ for some $x\in X$. Then $f^*E$ carries the simple
structure $\xi(x,\zeta)$; give $E$ the simple structure for
which the torsion of $\overline{f}$ vanishes. We are going to
denote this simple structure by $\xi(\eta,b,\zeta)$.
\end{remark}

Let $M$ be a closed topological manifold. Then, by
Kirby-Siebenmann~\cite[Essay~III, Theorem~4.1 on page~118]{Kirby-Siebenmann(1977)},
there is a preferred simple structure
\begin{eqnarray}
\xi^{\topo}(M) \mbox{ on } M,
\label{preferred_simple_structure_on_a_topological_manifold}
\end{eqnarray}
which is defined by considering any triangulated closed disc bundle
over $M$: The simple structure on the disc bundle obtained from the
triangulation induces the preferred simple structure on $M$ via the
retraction onto $M$. This simple structure agrees with the one
obtained by any triangulation or by any handlebody decomposition (more generally
what they call TOP s-decomposition) of $M$, whenever they exist
(see~\cite[Essay~III, Theorem~5.10 on page~131 and 
Theorem~5.11 on page~132]{Kirby-Siebenmann(1977)}).

\begin{lemma} \label{lem:simple_structures_on_bundles}
  Let $F \to M \to B$ be a locally trivial bundle of closed
  topological manifolds with path connected $B$.  Then we get:
\begin{eqnarray*}
\Theta(p) & = & 0;
\\
\xi^{\topo}(M) & = & \xi\bigl(\xi^{\topo}(B),b,\xi^{\topo}(F)),
\end{eqnarray*}
where $\xi\bigl(\xi^{\topo}(B),b,\xi^{\topo}(F))$ 
has been defined in Remark~\ref{rem:simple_structures_on_B_and_F-yield_one_on_E}.
\end{lemma}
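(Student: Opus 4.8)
The plan is to reduce the statement to the case of a trivial bundle over a disc, where everything can be computed directly, and then glue using the compatibility lemmas established above. First I would prove the vanishing $\Theta(p) = 0$: this is immediate from Lemma~\ref{lem:fiber_bundles_are_simple}, since $F \to M \to B$ is a locally trivial fiber bundle with typical fiber a closed topological manifold, hence a finite $CW$-complex, and $B$ is paracompact (being a closed topological manifold), so $p$ is a simple fibration; by definition $\Theta(p\vert_C) = 0$ for each component, and since $B$ is path-connected this says $\Theta(p) = 0$. In particular the simple structure $\xi\bigl(\xi^{\topo}(B),b,\xi^{\topo}(F)\bigr)$ of Remark~\ref{rem:simple_structures_on_B_and_F-yield_one_on_E} is defined.

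Next, for the identification of simple structures, I would argue by induction over a handlebody (or TOP $s$-decomposition) of the base $B$; such a decomposition exists after possibly crossing with a disc, and by definition of $\xi^{\topo}$ it is enough to work with the $s$-decomposition and pull everything back through the retraction of a triangulated disc bundle, so I may assume $B$ is built by attaching handles $D^k \times D^{n-k}$. The induction step glues a handle along $S^{k-1} \times D^{n-k}$. At each stage the relevant pushout square of total spaces is a simple pushout: on the base the attaching happens along a sub-handlebody, and Lemma~\ref{lem:compatibility_of_simple_structure_with_pushouts} (applied to the decomposition of $B$ into the previous piece together with the new handle) tells us that the pushout simple structure on $E$ agrees with $\xi(b,\zeta)$ for an appropriate choice of spiders and $\zeta = \xi^{\topo}(F)$. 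Simultaneously, $\xi^{\topo}(M)$ decomposes as a pushout simple structure by Remark~\ref{rem:not-necessarily_cellular} applied to the induced handlebody decomposition of $M$ (the handles of $M$ are $D^k \times D^{n-k} \times F$-type pieces, obtained by triangulating), together with Lemma~\ref{lem:properties_of_Whitehead_torsion}~\ref{lem:properties_of_Whitehead_torsion:sum_formula}.

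The base case is a trivial bundle over a disc (or a point): here $M = D^n \times F$ up to homeomorphism, the product simple structure on $D^n \times F$ is $\xi^{\topo}(D^n \times F) = \xi^{\topo}(M)$ because the canonical simple structure on a handle is the product one and crossing with $F$ (a manifold, triangulable) preserves simplicity, and by Remark~\ref{rem:simple_structure_on_trivial_fibration} this product simple structure is exactly $\xi(b,\xi^{\topo}(F))$. One must also check that the homotopy equivalence $\overline{f}\colon f^*E \to E$ entering the definition of $\xi(\eta,b,\zeta)$ in Remark~\ref{rem:simple_structures_on_B_and_F-yield_one_on_E} has vanishing torsion at each stage, which follows from Lemma~\ref{lem:compatibility_of_simple_structure_with_simple_pullback} since the $CW$-model of $\xi^{\topo}(B)$ can be taken to be the handlebody decomposition itself, so $f$ is already simple and the fiber restrictions are identities.

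The main obstacle will be bookkeeping the spiders and base points so that the inductive pushout squares genuinely satisfy the hypotheses of Lemma~\ref{lem:compatibility_of_simple_structure_with_pushouts}—in particular ensuring that the spider chosen on $B$ restricts compatibly to the sub-handlebody and the attached handle, and that the base point $b$ can be kept fixed throughout (using Corollary~\ref{cor:dependence_of_simple_structure_on_(b,s,xi(F_b)_for_simple_p} and Notation~\ref{not:_xi(E,b,xi(F_b))} to see that the simple structure $\xi(b,\zeta)$ is genuinely independent of the spider because $p$ is simple). Once that is set up, the comparison of the two pushout decompositions of $\xi^{\topo}(M)$ and of $\xi\bigl(\xi^{\topo}(B),b,\xi^{\topo}(F)\bigr)$ is a formal consequence of the sum formula together with the fact that they agree on each handle.
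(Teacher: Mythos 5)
Your argument for $\Theta(p)=0$ and for the high-dimensional handlebody induction is essentially the paper's: induct over a Kirby--Siebenmann handle decomposition $D^n=B_0\subset B_1\subset\dots\subset B_k=B$ of the base, use Lemma~\ref{lem:compatibility_of_simple_structure_with_pushouts} at each step, and anchor the induction with the trivial-bundle case via Remark~\ref{rem:simple_structure_on_trivial_fibration}. Two corrections on the details, however. First, the input that identifies $\xi^{\topo}(E\vert_{B_{k+1}})$ with the pushout simple structure is not Remark~\ref{rem:not-necessarily_cellular} (which concerns attaching $CW$-cells, not thickened handles $E\vert_H$); the correct reference is Kirby--Siebenmann's theorem that a TOP $s$-decomposition gives one of the equivalent ways of producing $\xi^{\topo}$, \cite[Essay~III, \S 5]{Kirby-Siebenmann(1977)}.

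Second, and more importantly, there is a genuine gap in the low-dimensional case $\dim B\leq 5$, where a closed topological manifold need not admit a handle decomposition. Your proposal to ``cross with a disc and pull everything back through the retraction of a triangulated disc bundle'' is too vague to close the argument as stated: crossing with a disc produces compact manifolds \emph{with boundary}, so you would have to extend $\xi^{\topo}$ and the entire inductive comparison into that setting, and it is not clear how the retraction carries the bundle-side simple structure $\xi(\xi^{\topo}(B),b,\xi^{\topo}(F))$ back without a separate argument. The paper avoids this altogether: choose a $1$-connected closed topological manifold $N$ with $\dim N\geq 6$ and $\chi(N)=1$ (e.g.\ $(\IC\IP^2\times\IC\IP^2)\sharp\, 4(S^3\times S^5)$), apply the already-established high-dimensional case to $M\times N\to B\times N$, identify $\xi\bigl(\xi^{\topo}(B\times N),(b,n),\xi^{\topo}(F)\bigr)$ with the product $\xi\bigl(\xi^{\topo}(B),b,\xi^{\topo}(F)\bigr)\times\xi^{\topo}(N)$, and then cancel the $N$-factor using the product formula Lemma~\ref{lem:properties_of_Whitehead_torsion}~\ref{lem:properties_of_Whitehead_torsion:product_formula} and $\chi(N)=1$. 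Your instinct to multiply by an Euler-characteristic-one space is right, but $N$ should be a closed manifold of dimension $\geq 6$ so that the lemma applies verbatim and no boundary bookkeeping is needed.
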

\begin{proof} We have already proved $\Theta(p) = 0$ in
  Lemma~\ref{lem:fiber_bundles_are_simple}. Moreover, if the bundle
  happens to be globally trivial, then the simple structure
  $\xi(M,b,\xi^{\topo}(F))$ agrees with $\xi^{\topo}(B\times F)$ by 
  Remark~\ref{rem:simple_structure_on_trivial_fibration}.

  Consider now the general case. We need not take care of the base
  point $b$, as the torsion of every fiber transport is zero (see
  Corollary~\ref{cor:dependence_of_simple_structure_on_(b,s,xi(F_b)_for_simple_p}).
  First suppose that $\dim(B)\geq 6$. Then there exists a
  handlebody decomposition
\[D^n=B_0\subset B_1 \subset\dots\subset B_k=B\]
\cite[Essay~III, \S 2]{Kirby-Siebenmann(1977)}, 
and proceed by induction over $k$. If $k=0$, then the bundle is trivial, and the
claim follows. For the induction step, consider the pushout which
attaches to $B_k$ a handle $H$ to get $B_{k+1}$. By
Lemma~\ref{lem:compatibility_of_simple_structure_with_pushouts}, the
pushout \xycomsquare{\bigl(E\vert_{H\cap B_k}, \xi(\xi^{\topo}(H\cap
  B_k),b,\xi^{\topo}(F)) \bigr)}{}{\bigl(E\vert_{B_k},
  \xi(\xi^{\topo}(B_k),b,\xi^{\topo}(F)) \bigr)}{}{}{\bigl(E\vert_{H},
  \xi(\xi^{\topo}(H),b,\xi^{\topo}(F))
  \bigr)}{}{\bigl(E\vert_{B_{k+1}},\xi(\xi^{\topo}(B_{k+1}),b,\xi^{\topo}(F))
  \bigr)} is also simple. Here, the simple structures of the left
column agree with the preferred structures as the bundles are trivial;
by induction hypothesis, the same is true for the upper right space.
Now the above pushout is one of the equivalent methods in
\cite[Essay~III, \S 5]{Kirby-Siebenmann(1977)} to give
$E\vert_{B_{k+1}}$ its preferred simple structure
$\xi^{\topo}(E\vert_{B_{k+1}})$. Hence the two structures on
$E\vert_{B_{k+1}}$ agree.

We still have to treat the case where $\dim(B)\leq 5$. Take a
1-connected closed topological manifold $N$ with $\dim(N)\geq 6$ and
$\chi(N)=1$, e.g., $(\IC\IP^2 \times \IC\IP^2) \sharp 4 (S^3 \times S^5)$.
Now apply what we have already proved to the fiber bundle
$M\times N\to B\times N$ which is the product of the original bundle
by the identity map on $N$.
This leads to the equality
\begin{equation}\label{eq:simple_structure_on_fibration_times_N}
\xi^{\topo}(M\times N) = \xi\bigl(\xi^{\topo}(B\times N), (b,n),\xi^{\topo}(F)\bigr)
\end{equation}
for any $n\in N$. 
It is not hard to check that the right hand side 
of~\eqref{eq:simple_structure_on_fibration_times_N} coincides with 
$\xi\left(\xi^{\topo}(B),b,\xi^{\topo}(F)\right)\times\xi^{\topo}(N)$. 
Since $\dim(N \times B) \ge 6$, we get
\begin{align*}
  &\tau\bigl(\id \colon (M,\xi^{\topo}(M))\to (M, \xi(\xi^{\topo}(B),b,\xi^{\topo}(F)))\bigr)\\
=\;  &\tau\bigl(\id \colon (M,\xi^{\topo}(M))\to (M, \xi(\xi^{\topo}(B),b,\xi^{\topo}(F)))\bigr) 
\cdot \chi(N) \\
=\; &\tau\bigl(\id \colon (M \times N,\xi^{\topo}(M) \times \xi^{\topo}(N))
\to (M \times N, \xi(\xi^{\topo}(B),b,\xi^{\topo}(F))) \times \xi^{\topo}(N)\bigr)\\
  = \; & \tau\left(\id \colon (M\times N,\xi^{\topo}(M\times N))\to
(M\times N, \xi(\xi^{\topo}(B\times N), (b,n),\xi^{\topo}(F))\right)\\
  = \; & 0
\end{align*}
by Lemma~\ref{lem:properties_of_Whitehead_torsion}~%
\ref{lem:properties_of_Whitehead_torsion:product_formula}.
\end{proof}

\typeout{-----------------------  Section 4 ------------------------}

\section{Turning a map into a fibration}
\label{sec:Turning_a_map_into_a_fibration}

Let $f\colon X \to B$ be a map. Let $\FIB(f)$ be the subspace of
$X \times \map([0,1],B)$ consisting of pairs $(x,w)$ which satisfy
$w(0) = f(x)$. Let $\widehat{f}\colon \FIB(f) \to B$ be the map sending
$(x,w)$ to $w(1)$. Let $\lambda_f\colon X \to \FIB(f)$ be the map which sends
$x \in X$ to $(x,c_{f(x)})$ for $c_{f(x)}$ the constant path at $f(x)$ in $B$.
Denote by $\mu_f\colon \FIB(f) \to X$ the map $(x,w) \mapsto x$.
Then $\widehat{f}\colon \FIB(f) \to B$ is a
fibration, $\lambda_f$ is a homotopy equivalence and
$\widehat{f} \circ \lambda_f = f$, $\mu_f \circ \lambda_f = \id$,
$f \circ \mu_f \simeq \widehat{f}$ and $\lambda_f \circ \mu_f \simeq \id$
\cite[Theorem 7.30 on page 42]{Whitehead(1978)}.
The fiber of $\widehat{f}\colon \FIB(f) \to B$ over $b$
is called the homotopy fiber of $f$ over $b$ and denoted by $\hofib(f)_b$.

\begin{lemma} \label{lem:turning_a_map_into_a_fibration}
\begin{enumerate}

\item  \label{lem:turning_a_map_into_a_fibration:map_already_fibration}
If $f\colon E \to B$ is already a fibration, then $\lambda_f\colon E \to \FIB(f)$ is
a fiber homotopy equivalence over $B$;

\item  \label{lem:turning_a_map_into_a_fibration:homotopic_maps}
If $H\colon X \times [0,1]$ is a homotopy, $f \simeq g\colon X \to B$,
then it induces a fiber homotopy equivalence
$\widehat{H}\colon \FIB(f) \to \FIB(g)$.

\end{enumerate}
\end{lemma}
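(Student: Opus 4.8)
The plan is to treat the two parts separately, each by an explicit construction followed by a fiber-homotopy-promotion step.

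For \ref{lem:turning_a_map_into_a_fibration:map_already_fibration}, I would first observe that $(\lambda_f,\id_B)$ is a map of fibrations over $B$: the map $\widehat f$ is a fibration for any $f$, by hypothesis $p = f$ is a fibration, and $\widehat f\circ\lambda_f = f$. Since $\lambda_f$ is already known to be a homotopy equivalence, the assertion is the general fact that a map of fibrations over a common base which is an (ordinary) homotopy equivalence is automatically a fiber homotopy equivalence; this is due to Dold and is contained in \cite[I.7]{Whitehead(1978)}. If one prefers an argument from scratch, one can produce a fiber homotopy inverse directly: lift the homotopy $\bigl((x,w),t\bigr)\mapsto w(t)$ from $\FIB(f)\times[0,1]$ to $B$ through the fibration $p$, starting at $\mu_f\colon\FIB(f)\to E$, to obtain $\overline g\colon\FIB(f)\to E$ with $p\circ\overline g = \widehat f$, hence a map over $B$; as $\overline g$ is homotopic to $\mu_f$ it is a homotopy equivalence, and the resulting ordinary homotopies $\overline g\circ\lambda_f\simeq\id_E$ and $\lambda_f\circ\overline g\simeq\id_{\FIB(f)}$ can then be upgraded to fiber homotopies by the standard argument using the homotopy lifting property of $p$ and of $\widehat f$, respectively.

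For \ref{lem:turning_a_map_into_a_fibration:homotopic_maps}, given $H$ with $H_0 = f$ and $H_1 = g$, I would define $\widehat H\colon\FIB(f)\to\FIB(g)$ by $\widehat H(x,w) := \bigl(x,\; H(x,-)^-\ast w\bigr)$, where $H(x,-)^-$ is the reverse of the path $t\mapsto H(x,t)$, running from $g(x)$ back to $f(x)$. Since $H(x,-)^-$ ends at $f(x) = w(0)$, the concatenation is defined, it starts at $g(x)$, so the pair lies in $\FIB(g)$, and its image under $\widehat g$ is $w(1) = \widehat f(x,w)$; thus $\widehat H$ is a well-defined map over $B$. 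A candidate fiber homotopy inverse is $\widehat{H'}\colon\FIB(g)\to\FIB(f)$, built the same way from the reversed homotopy $H'(x,t) := H(x,1-t)$, so that $\widehat{H'}(x,w') = \bigl(x,\; H(x,-)\ast w'\bigr)$.

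Finally I would check that $\widehat{H'}\circ\widehat H$ and $\widehat H\circ\widehat{H'}$ are fiber homotopic to the respective identities. The composite $\widehat{H'}\circ\widehat H$ sends $(x,w)$ to $\bigl(x,\; H(x,-)\ast H(x,-)^-\ast w\bigr)$, and the standard null-homotopy rel endpoints that cancels the out-and-back segment $H(x,-)\ast H(x,-)^-$ against the constant path at $f(x)$ depends continuously on $(x,w)$ and fixes the two endpoints $f(x)$ and $w(1)$ throughout; applying $\widehat g$ to this homotopy therefore gives the constant value $w(1)$, so it is a fiber homotopy over $B$, proving $\widehat{H'}\circ\widehat H\simeq_{\widehat f}\id$. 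The argument for the other composite is symmetric. The one point requiring genuine (if routine) care is verifying that all these concatenation and reparametrization maps are continuous in the compactly generated category and that the cancellation homotopy can be chosen fiberwise; I do not expect any essential obstacle beyond this bookkeeping.
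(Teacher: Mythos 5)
Your proposal is correct and follows essentially the same route as the paper: for (i) the paper simply cites the standard result (\cite[Theorem 7.31 on page 43]{Whitehead(1978)}, i.e.\ Dold's theorem applied to the map of fibrations $(\lambda_f,\id_B)$), and for (ii) the paper's explicit formula for $\widehat{H}$ is exactly your concatenation $H(x,-)^-\ast w$ written out with the $[0,1/2]$, $[1/2,1]$ parametrization. You supply more detail than the paper does (an explicit fiber homotopy inverse and the cancellation fiber homotopies for (ii)), all of which checks out.
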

\begin{proof}\ref{lem:turning_a_map_into_a_fibration:map_already_fibration}
see \cite[Theorem 7.31 on page 43]{Whitehead(1978)}.
\\[1mm]\ref{lem:turning_a_map_into_a_fibration:homotopic_maps}
$\widehat{H}$ sends $(x,w) \in \FIB(f)$ to $(x,v)\in \FIB(g)$ for the following
path $v\colon [0,1] \to B$
$$v(t)=\left\{\begin{array}{ll}
H(x,1-2t) & 0 \le t \le 1/2;
\\
w(2t-1) & 1/2 \le t \le 1.
\end{array}\right.$$
\end{proof}

\typeout{-----------------------  Section 5 ------------------------}

\section{Fiber torsion obstructions}
\label{sec:Fiber_torsion_obstructions}

\begin{definition}[Fiber torsion obstructions] \label{def:fiber_torsion_obstruction}
Let $f\colon M \to B$ be a map of closed topological manifolds
for path-connected $B$. Suppose that
for some (and hence all) $b \in B$ the homotopy fiber
$\hofib(f)_b$ has the homotopy type of a finite $CW$-complex.

\begin{enumerate}

\item Define the element
$$\Theta(f) \in H^1\bigl(B;\Wh(\pi(M))\bigr)$$
to be the image of $\Theta(\widehat{f})$ under the isomorphism
$H^1\bigl(B;\Wh(\pi(\FIB(f)))\bigr) \to H^1\bigl(B;\Wh(\pi(M))\bigr)$ induced
by the homotopy equivalence $\mu_f \colon \FIB(f) \to M$;

\item
Suppose that $\Theta(f)$ vanishes. Let
$(\mu_f \circ i_b)_*\colon \Wh\bigl(\pi(\hofib(f)_b)\bigr) \to \Wh(\pi(M))$ be the
map induced by the composite
$\hofib(f)_b \xrightarrow{i_b} \FIB(f) \xrightarrow{\mu_f} M$.
Define the \emph{fiber torsion obstruction}
$$\hspace{10mm} \tau_{\fib}(f) \; \in \;
\cok\bigl(\chi(B) \cdot (\mu_f \circ i_b)_*\colon
\Wh(\pi(\hofib(f)_b)) \to \Wh(\pi(M))\bigr)
$$
to be class for which a representative in $\Wh(\pi(M))$ is the image of the Whitehead torsion
$$\tau\big(\lambda_f \colon (M,\xi^{\topo}(M)) \to
(\FIB(f),\xi(b,\zeta))\big)$$
under the isomorphism
$(\mu_f)_*\colon \Wh\bigl(\pi(\FIB(f))\bigr) \to \Wh(\pi(M))$ for some choice of
a base point $b \in B$ and a simple
structure $\zeta$ on $\hofib(f)_b$.
\end{enumerate}
\end{definition}

\begin{remark}[Independence of the base point $b \in B$ and the simple structure
  on $\zeta$ on $\hofib(f)_b)$] Notice that the image of
  $\Wh\bigl(\pi(\hofib(f)_b)\bigr) \to \Wh(\pi(M))$ is independent of the choice
  of $b \in B$.  Namely, let $b'$ be another base point. The fiber transport
  along some path $w$ from $b$ to $b'$ defines a homotopy equivalence $t_w
  \colon \hofib(f)_b \to \hofib(f)_{b'}$ such that the composites $\mu_f \circ
  i_b$ and $\mu_f \circ i_{b'} \circ t_w$ are homotopic and hence induce the
  same map on Whitehead groups.  Hence $$\cok\bigl(\chi(B) \cdot (\mu_f \circ
  i_b)_*\colon \Wh(\pi(\hofib(f)_b)) \to \Wh(\pi(M))\bigr)$$ is independent of
  the choice of $b \in B$.

  Next we show that $\tau_{\fib}(f)$ is a well-defined invariant of $f$ if
  $\Theta(f)$ vanishes.  We have to show that the choice of a base point $b \in
  B$ and of a simple structure $\zeta$ on $\hofib(f)_b$ does not matter.  (We already know
   that the choice of spiders does not play a role.)  Suppose we have made
  a different choice of a base point $b' \in B$ and of a simple structure of
  $\zeta'$ on $\hofib(f)_{b'}$. Then we get from
  Lemma~\ref{lem:properties_of_Whitehead_torsion}~\ref{lem:properties_of_Whitehead_torsion:composition_formula}
  \begin{multline*}
    \tau\big(\lambda_f \colon (M,\xi^{\topo}(M)) \to
    (\FIB(f),\xi(b',\zeta'))\big) \\ - \tau\big(\lambda_f \colon
    (M,\xi^{\topo}(M)) \to (\FIB(f),\xi(b,\zeta))\big)
    \\
    = \tau\bigl(\id\colon (\FIB(f),\xi(b,\zeta)) \to
    (\FIB(f),\xi(b',\zeta'))\bigr).
  \end{multline*}
  Now apply
  Corollary~\ref{cor:dependence_of_simple_structure_on_(b,s,xi(F_b)_for_simple_p}.
\end{remark}

\begin{theorem}\label{the:prop._of_fiber_torsion_obstruction}
Let $f\colon M \to B$ be a map of closed topological manifolds
for path-connected $B$. Then

\begin{enumerate}

\item \label{the:prop._of_fiber_torsion_obstruction:homotopy_invariance}
The element $\Theta(f)$ depends only on the homotopy class of $f$. 
If $\Theta(f)$ vanishes, then the same statement holds for
the fiber torsion obstruction $\tau_{\fib}(f)$;

\item \label{the:prop._of_fiber_torsion_obstruction:obstruction}
If $f$ is homotopic to a map $g\colon M \to B$ which is the projection
of a locally trivial fiber bundle with a closed topological
manifold as fiber, then both $\Theta(f)$ and $\tau_{\fib}(f)$ vanish.
\end{enumerate}
\end{theorem}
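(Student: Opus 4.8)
The plan is to reduce everything to the invariance and vanishing properties already established for fibrations in Section~\ref{sec:The_simple_structure_on_a_total_space_of_a_fibration}, via the machinery of Section~\ref{sec:Turning_a_map_into_a_fibration}.

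For part~\ref{the:prop._of_fiber_torsion_obstruction:homotopy_invariance}, suppose $H\colon M\times[0,1]\to B$ is a homotopy from $f$ to $g$. By Lemma~\ref{lem:turning_a_map_into_a_fibration}~\ref{lem:turning_a_map_into_a_fibration:homotopic_maps} it induces a fiber homotopy equivalence $\widehat{H}\colon\FIB(f)\to\FIB(g)$ over $B$. The statement $\Theta(f)=\Theta(g)$ then follows from the second formula in Lemma~\ref{lem:simple_structure_and_fiber_homotopy_equivalences}, after checking that the isomorphism $\widehat{H}_*$ on $H^1$ is compatible with the identifications $\mu_f,\mu_g$ to $\Wh(\pi(M))$; concretely, $\mu_g\circ\widehat{H}\simeq\mu_f$, which one reads off from the explicit formula for $\widehat{H}$ in the proof of Lemma~\ref{lem:turning_a_map_into_a_fibration}. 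Assuming now $\Theta(f)=\Theta(g)=0$, pick a base point $b$ and simple structures $\zeta$ on $\hofib(f)_b$; transport $\zeta$ along $\widehat{H}$ to a simple structure $\zeta'$ on $\hofib(g)_b$. The composition formula Lemma~\ref{lem:properties_of_Whitehead_torsion}~\ref{lem:properties_of_Whitehead_torsion:composition_formula} applied to $\lambda_g=\widehat{H}\circ\lambda_f$ (up to the homotopy giving $\mu_g\circ\widehat{H}\simeq\mu_f$, hence $\widehat{H}\circ\lambda_f\simeq\lambda_g$, using Lemma~\ref{lem:properties_of_Whitehead_torsion}~\ref{lem:properties_of_Whitehead_torsion:homotopy_invariance}) gives
\[
\tau\bigl(\lambda_g\colon(M,\xi^{\topo}(M))\to(\FIB(g),\xi(b,\zeta'))\bigr)
=\tau(\widehat{H})+\widehat{H}_*\,\tau\bigl(\lambda_f\colon(M,\xi^{\topo}(M))\to(\FIB(f),\xi(b,\zeta))\bigr).
\]
By the first formula in Lemma~\ref{lem:simple_structure_and_fiber_homotopy_equivalences}, $\tau(\widehat{H})$ lies in the image of $\chi(B)\cdot(i_b)_*\colon\Wh(\pi(\hofib(g)_b))\to\Wh(\pi(\FIB(g)))$. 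Pushing forward to $\Wh(\pi(M))$ via $(\mu_g)_*$, this image lands precisely in the subgroup modulo which $\tau_{\fib}$ is defined, so $\tau_{\fib}(f)$ and $\tau_{\fib}(g)$ agree as classes in the cokernel.

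For part~\ref{the:prop._of_fiber_torsion_obstruction:obstruction}, by~\ref{the:prop._of_fiber_torsion_obstruction:homotopy_invariance} we may assume $f=g$ is the projection of a locally trivial fiber bundle $F\to M\xrightarrow{f}B$ of closed topological manifolds. Then $\Theta(\widehat{f})=0$ by Lemma~\ref{lem:fiber_bundles_are_simple} (or Lemma~\ref{lem:simple_structures_on_bundles}), hence $\Theta(f)=0$. For $\tau_{\fib}(f)$: by Lemma~\ref{lem:turning_a_map_into_a_fibration}~\ref{lem:turning_a_map_into_a_fibration:map_already_fibration}, $\lambda_f\colon M\to\FIB(f)$ is a fiber homotopy equivalence over $B$, and $\mu_f$ is its fiber homotopy inverse. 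Choose $b\in B$ and put $\zeta=\xi^{\topo}(F)$ on the fiber $F=\hofib(f)_b$ (note the actual fiber and the homotopy fiber agree up to canonical homotopy equivalence here). Apply Lemma~\ref{lem:simple_structure_and_fiber_homotopy_equivalences} to the fiber homotopy equivalence $\lambda_f$: it gives
\[
\tau\bigl(\lambda_f\colon(M,\xi(b,\zeta_M))\to(\FIB(f),\xi(b,\zeta))\bigr)=\chi(B)\cdot(i_b)_*\tau\bigl((\lambda_f)_b\colon(F,\zeta_M)\to(F,\zeta)\bigr),
\]
where $\zeta_M$ is the simple structure on the fiber of $f\colon M\to B$ itself. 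But $f$ is a fiber bundle of closed manifolds, so by Lemma~\ref{lem:simple_structures_on_bundles} the preferred simple structure $\xi^{\topo}(M)$ coincides with $\xi(\xi^{\topo}(B),b,\xi^{\topo}(F))=\xi(b,\xi^{\topo}(F))$ (after identifying base simple structures, which is harmless once $\Theta=0$), i.e.\ we may take $\zeta_M=\xi^{\topo}(F)=\zeta$. Then $(\lambda_f)_b$ is the identity up to simple homotopy equivalence, its torsion vanishes, and hence the right-hand side is zero; transporting by $(\mu_f)_*$ shows the representative of $\tau_{\fib}(f)$ is $0$, so $\tau_{\fib}(f)=0$.

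The main obstacle I anticipate is the bookkeeping in the last paragraph: one must carefully match the simple structure that $\xi^{\topo}(M)$ puts on $M$ with the fibered simple structure $\xi(b,\zeta)$ pushed along $\mu_f$, and verify that the identification of base-space simple structures and of the fiber $F$ with $\hofib(f)_b$ does not introduce an extra torsion term. This is exactly where Lemma~\ref{lem:simple_structures_on_bundles} does the heavy lifting, but one must be sure its hypotheses (closed manifold fibers, path-connected base) are in force and that the homotopy fiber is being used consistently with the geometric fiber. The $\chi(B)$ factors also need watching, since $\tau_{\fib}(f)$ is only defined modulo the image of $\chi(B)\cdot(\mu_f\circ i_b)_*$; fortunately every torsion term produced by Lemma~\ref{lem:simple_structure_and_fiber_homotopy_equivalences} already carries the factor $\chi(B)$, so each such term is legitimately zero in the cokernel even without the geometric vanishing, which makes part~\ref{the:prop._of_fiber_torsion_obstruction:homotopy_invariance} go through cleanly.
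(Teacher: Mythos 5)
Your proposal is correct and takes essentially the same route as the paper's proof: it passes to fibrations via Lemma~\ref{lem:turning_a_map_into_a_fibration}, controls $\Theta$ and the fibered simple structures across the fiber homotopy equivalences $\widehat{H}$ and $\lambda_f$ via Lemma~\ref{lem:simple_structure_and_fiber_homotopy_equivalences}, and invokes Lemma~\ref{lem:simple_structures_on_bundles} for the bundle case. Your write-up of part~\ref{the:prop._of_fiber_torsion_obstruction:obstruction} spells out in more detail what the paper compresses into ``the same arguments show $\tau_{\fib}(f)=\tau_{\fib}(g)$, and $\tau_{\fib}(g)=0$,'' but the ingredients and their arrangement coincide.
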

\begin{proof}~\ref{the:prop._of_fiber_torsion_obstruction:homotopy_invariance}
  Let $H \colon M \times [0,1] \to B$ be a homotopy between $f$ and $g$. Then
  the fiber homotopy equivalence $\widehat{H} \colon \FIB(f) \to \FIB(g)$ over
  $\id_B$ constructed in the proof of
  Lemma~\ref{lem:turning_a_map_into_a_fibration}~\ref{lem:turning_a_map_into_a_fibration:homotopic_maps}
  has the property that the composite $M \xrightarrow{\lambda_f} \FIB(f)
  \xrightarrow{\widehat{H}} \FIB(g)$ is homotopic to $\lambda_g \colon M \to
  \FIB(g)$. Lemma~\ref{lem:simple_structure_and_fiber_homotopy_equivalences}
  implies that the isomorphism
  $$H^1\bigl(\id_B;\Wh(\pi(\widehat{H}))\bigr) \colon
  H^1\bigl(B;\Wh(\pi(\FIB(f)))\bigr) \to
  H^1\bigl(B;\Wh(\pi(\FIB(g)))\bigr)$$
  sends $\theta(\widehat{f})$  to  $\theta(\widehat{g})$. Hence
  $\theta(f) = \theta(g)$ since homotopic maps induce the same map on Whitehead
  groups.

  Now suppose $\theta(f) = 0$. Hence the fibrations $\widehat{f} \colon
  \FIB(f) \to B$ and $\widehat{g} \colon \FIB(g) \to B$ are simple. Fix a base
  point $b \in B$ and a simple structure $\zeta$ on $\FIB(f)_b$. Equip
  $\FIB(g)_b$ with the simple structure $\zeta'$ for which the homotopy
  equivalence $\FIB(f)_b \to \FIB(g)_b$ induced by $\widehat{H}$ is simple.  Let
  $s$ be any spider for $B$ at $b$.  We conclude from
  Lemma~\ref{lem:simple_structure_and_fiber_homotopy_equivalences} that
  $\tau\bigl(\widehat{H} \colon (\FIB(f),\xi(b,s,\zeta)) \to
  (\FIB(g),\xi(b,s,\zeta'))\bigr)$ vanishes.  This implies $\tau_{\fib}(f) =
  \tau_{\fib}(g)$ since homotopic maps induce the same map on Whitehead groups
  and we have already shown that the definition of $\tau_{\fib}$ is
  independent of the choice of the
  base point $b$, the spider $s$ and the simple structure $\zeta$.
  \\[1mm]~\ref{the:prop._of_fiber_torsion_obstruction:obstruction} 
  Let $g$ be a fiber bundle homotopic to $f$. By assertion~\ref{the:prop._of_fiber_torsion_obstruction:homotopy_invariance}, $\Theta(f)=\Theta(g)$. The fibration associated to $g$ is, by Lemma \ref{lem:turning_a_map_into_a_fibration} \ref{lem:turning_a_map_into_a_fibration:map_already_fibration}, fiber homotopy equivalent to $g$, so Lemma \ref{lem:simple_structure_and_fiber_homotopy_equivalences} allows us to compute $\Theta(g)$ directly from the bundle $g$. Lemma \ref{lem:simple_structures_on_bundles} implies that $\Theta(g)=0$. Now the same arguments show that $\tau_{\fib}(f)=\tau_{\fib}(g)$, and $\tau_{\fib}(g)=0$. 
\end{proof}

\begin{remark} \label{rem:special_case_chi(B)_is_0} Let $f\colon M \to B$ be a
  map of closed topological manifolds for path-connected $B$. If $\chi(B)$
  happens to be zero and $\Theta(f)$ vanishes, the invariant defined in
  Definition~\ref{def:fiber_torsion_obstruction} lives in
  \begin{eqnarray}
    \tau_{\fib}(f) & \in & \Wh(\pi(M)).
    \label{tau(f)_for_chi(B)_is_0}
  \end{eqnarray}
  In other words, if $\chi(B) = 0$, then $\FIB(f)$ carries a preferred simple
  structure $\xi$ by Corollary~\ref{cor:simple_structure_on_E_for_chi(B)_is_0}
  and the element $\tau_{\fib}(f)$ is the image of the Whitehead torsion of the
  map $\lambda\colon (M,\xi^{\topo}(M)) \to (\FIB(f),\xi)$ under the isomorphism
  $(\mu_f)_*\colon \Wh(\pi(\FIB(f)))$ $ \to \Wh(\pi(M))$.
\end{remark}

\begin{example}
  Let $f\colon M \to B$ be a map of closed topological manifolds for
  path-connected $B$ and $M$. Suppose that for some (and hence all) $b \in B$
  the homotopy fiber $\hofib(f)_b$ has the homotopy type of a finite
  $CW$-complex.  Suppose that the Whitehead group of the kernel of $\pi_1(f)
  \colon \pi_1(M) \to \pi_1(B)$ is trivial. This is the case if $\pi_1(f)$ is
  bijective. Then $\Theta(f)$ vanishes.

  This follows from the long exact homotopy sequence of $\FIB(f) \to B$ which
  implies that under the conditions above the map
  $\Wh\bigl(\pi(\hofib(f)_b)\bigr) \to \Wh(\pi(M))$ is trivial.
\end{example}

\typeout{-----------------------  Section 6 ------------------------}

\section{Base space $S^1$}
\label{sec:base_space_S1}

In this section we consider the case, where the base space is the
one-dimensional sphere $S^1$, i.e., we consider a map 
$$f \colon M \to S^1$$
from a connected closed manifold $M$ to $S^1$ whose homotopy fiber has the
homotopy type of a finite $CW$-complex.  In this special
situation we can find a single obstruction $\tau_{\fib}'(f)$ which
carries the same information as our previous invariants $\Theta(f)$
and $\tau_{\fib}(f)$ and has a nice description in terms of mapping
tori. $\tau_{\fib}(f)$ agrees with the obstruction $\tau(f)$ defined 
in \cite{Farrell(1971)}.

We begin with the definition of $\tau_{\fib}'(f)$.  
Let $e \colon \IR \to S^1, \quad t \mapsto \exp(2\pi i t)$ 
be the universal covering of $S^1$. We abbreviate the homotopy 
fiber over $e(0)$ by
$F:=\hofib(f)_{e(0)} = \FIB(f)_{e(0)}$.

Equip $S^1$ with the $CW$-structure whose $0$-skeleton is $e(0)$ and
whose $1$-skeleton is $S^1$. Let $s$ be the spider based at $e(0)$
which is given by the constant path at $e(0)$ for the $0$-cell and by
the path $w \colon [0,1] \to S^1$ sending $t$ to $\exp(\pi i t)$ for
the $1$-cell.  Equip $\FIB(f)$ with the simple structure
$\xi\bigl(e(0),s,\zeta\bigr)$ defined
in~\eqref{preferred_simple_structure_on_total_space} for any choice of
simple structure $\zeta$ on $F$. Because of
Lemma~\ref{lem:dependence_of_simple_structure_on_(b,s,xi(F_b))} the
simple structure $\xi\bigl(e(0),s,\zeta\bigr)$ is independent of the
choice of $\zeta$ and we will write $\xi\bigl(e(0),s\bigr)$. Then
\begin{eqnarray}
  \tau_{\fib}'(f) & \in & \Wh(\pi(M))
  \label{tauprime(f)}
\end{eqnarray}
is defined to be the Whitehead torsion of the canonical homotopy
equivalence $\mu_f \colon \FIB(f) \to M$ with respect to the simple
structure $\xi\bigl(e(0),s\bigr)$ on $\FIB(f)$ and the simple
structure associated to the structure $\xi^{\topo}(M)$ of a 
closed topological manifold on $M$.

In the sequel we identify $H^1(S^1;\Wh(\pi(M)) = \Wh(\pi(M))$
using the standard generator of $\pi_1(S^1) \cong  H_1(S^1)\cong \IZ$ represented by the
identity map $\id \colon S^1 \to S^1$. In
particular $\Theta(f)$ becomes an element  in $\Wh(\pi(M))$.

Complex conjugation defines an orientation reversing self-diffeomorphism
$$\con \colon S^1 \to S^1, \quad z \mapsto \overline{z}.$$

\begin{lemma}
\label{lem_thetaprime_compared_with_Theta_and_tau}
\
\begin{enumerate} 
\item \label{lem_thetaprime_compared_with_Theta_and_tau:theta_from_tauprime}
We have 
$$ \Theta(f) = \tau_{\fib}'(f) - \tau_{\fib}'(\con \circ f);$$
\item \label{lem_thetaprime_compared_with_Theta_and_tau:tau_from_tauprime}
If $\Theta(f) = 0$, then
$$\tau_{\fib}(f) = \tau_{\fib}'(f);$$
\item \label{lem_thetaprime_compared_with_Theta_and_tau:vanishing}
We have $\tau_{\fib}'(f) = 0$ if  $\Theta(f) = 0$ and $\tau_{\fib}(f)=0$ hold.
\end{enumerate}
\end{lemma}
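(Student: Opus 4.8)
The plan is to compare the two preferred simple structures on $\FIB(f)$ that enter the definitions: the structure $\xi(e(0),s,\zeta)$ built from the specific spider $s$ for $S^1$ (which does \emph{not} assume simplicity of $\widehat{f}$) and, when $\Theta(f)=0$, the structure $\xi(e(0),\zeta)$ coming from Notation~\ref{not:_xi(E,b,xi(F_b))}. The key quantitative input is Lemma~\ref{lem:dependence_of_simple_structure_on_(b,s,xi(F_b))}, which expresses the torsion of the identity map between two such structures as an alternating sum over cells of $S^1$. Since $S^1$ has one $0$-cell and one $1$-cell, that sum has exactly two terms, of opposite sign; the $0$-cell contributes the torsion of a fiber transport along a \emph{nullhomotopic} loop (hence $0$ after a suitable choice, or in any case something we can normalize away by the choice of $\zeta$), and the $1$-cell contributes the torsion of the fiber transport around the generator of $\pi_1(S^1)$, which is by definition $\Theta(\widehat{f})$ transported to $\Wh(\pi(M))$ via $(\mu_f)_*$, i.e.\ $\Theta(f)$ under our identification $H^1(S^1;\Wh(\pi(M)))=\Wh(\pi(M))$.

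For part~(i), I would run the above comparison for the two maps $f$ and $\con\circ f$. The fibration $\widehat{\con\circ f}$ is obtained from $\widehat{f}$ by pulling back along $\con\colon S^1\to S^1$, which reverses orientation; concretely one has a canonical identification of $\FIB(\con\circ f)$ with $\FIB(f)$ commuting with $\mu$, under which the spider $s$ for $\con\circ f$ corresponds to a spider running the $1$-cell the \emph{other} way. Hence $\xi(e(0),s,\zeta)$ for $\con\circ f$ differs from $\xi(e(0),s,\zeta)$ for $f$ by the identity-map torsion computed via Lemma~\ref{lem:dependence_of_simple_structure_on_(b,s,xi(F_b))}, where now the relevant loop on the $1$-cell is the \emph{inverse} generator; combined with the sign $(-1)^{\dim} = -1$ on the $1$-cell and the relation $\tau(t([w]^{-1})) = -(t([w]))_*\tau(t([w]))$, the net effect is that $\tau_{\fib}'(f)-\tau_{\fib}'(\con\circ f)$ equals the $\Theta$-term, i.e.\ $\Theta(f)$. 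The $0$-cell terms cancel because they involve a constant (nullhomotopic) fiber transport and are the same on both sides. I would set this up carefully using Lemma~\ref{lem:properties_of_Whitehead_torsion}~\ref{lem:properties_of_Whitehead_torsion:composition_formula} to write $\tau_{\fib}'(f) = \tau(\mu_f) = \tau(\lambda_f)$-type identities so that all torsions live in the fixed group $\Wh(\pi(M))$ and the bookkeeping of induced maps is unambiguous.

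Part~(ii) is then a direct comparison: when $\Theta(f)=0$, Corollary~\ref{cor:dependence_of_simple_structure_on_(b,s,xi(F_b)_for_simple_p} says the identity-map torsion between $\xi(e(0),s,\zeta)$ and the simplicity-based structure $\xi(e(0),\zeta)$ equals $\chi(S^1)\cdot\tau_0 = 0$ since $\chi(S^1)=0$; hence $\tau_{\fib}'(f)$, defined via $\xi(e(0),s)$, agrees with $\tau_{\fib}(f)$, defined (in the $\chi(B)=0$ form of Remark~\ref{rem:special_case_chi(B)_is_0}) via the preferred structure on $\FIB(f)$. Part~(iii) is immediate from~(ii): if $\Theta(f)=0$ and $\tau_{\fib}(f)=0$ then $\tau_{\fib}'(f)=\tau_{\fib}(f)=0$. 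The main obstacle I anticipate is purely one of careful bookkeeping in part~(i): tracking how the orientation-reversing map $\con$ acts on the spider, on the cellular pushout for $S^1$, and on the sign $(-1)^{\dim(c)}$, and verifying that the fiber-transport term really reproduces exactly $\Theta(f)$ (and not $-\Theta(f)$ or $2\Theta(f)$) under the chosen identification $H^1(S^1;\Wh(\pi(M)))\cong\Wh(\pi(M))$ — in particular getting the sign conventions for the generator of $\pi_1(S^1)$ and for inverse fiber transport to line up. Everything else reduces to the formal properties collected in Lemma~\ref{lem:properties_of_Whitehead_torsion} together with Lemma~\ref{lem:dependence_of_simple_structure_on_(b,s,xi(F_b))} and Corollary~\ref{cor:dependence_of_simple_structure_on_(b,s,xi(F_b)_for_simple_p}.
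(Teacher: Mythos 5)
Your proposal is essentially the paper's own argument: reinterpret $\tau_{\fib}'(\con\circ f)$ as the torsion of $\mu_f$ computed with the reversed spider $\overline{s}$ on $\FIB(f)$, then use the composition formula (Lemma~\ref{lem:properties_of_Whitehead_torsion}~\ref{lem:properties_of_Whitehead_torsion:composition_formula}) and Lemma~\ref{lem:dependence_of_simple_structure_on_(b,s,xi(F_b))} to identify the difference with the $1$-cell fiber-transport term, which is $\Theta(f)$; parts~\ref{lem_thetaprime_compared_with_Theta_and_tau:tau_from_tauprime} and \ref{lem_thetaprime_compared_with_Theta_and_tau:vanishing} then follow from the definitions just as you say. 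Your opening paragraph slightly misstates the relevant comparison (one should compare the two spider-based structures $\xi(e(0),s,\zeta)$ and $\xi(e(0),\overline{s},\zeta)$, not a spider-based one against $\xi(e(0),\zeta)$, since the latter pair coincide whenever both are defined), and you correctly flag but do not resolve the sign bookkeeping in part~\ref{lem_thetaprime_compared_with_Theta_and_tau:theta_from_tauprime}; these are minor and the approach matches the paper's.
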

\begin{proof}~\ref{lem_thetaprime_compared_with_Theta_and_tau:theta_from_tauprime}
  Let $\overline{s}$ be the spider on $S^1$ with base point $e(0)$ which is
  given by the constant path at $e(0)$ for the $0$-cell and 
  by the path $\overline{w} \colon [0,1] \to S^1$ sending $t$ to
  $\overline{\exp(\pi i t)} = \exp(-\pi i t)$ for the $1$-cell.  
  Obviously $\tau_{\fib}'(\con \circ f)$ is 
  the Whitehead torsion of the canonical homotopy
  equivalence $\mu_f \colon \FIB(f) \to M$ with respect to the simple
  structure $\xi\bigl(e(0),\overline{s}\bigr)$ on $\FIB(f)$ and the
  simple structure associated to the structure of a closed manifold on
  $M$.  Hence we conclude from
  Lemma~\ref{lem:properties_of_Whitehead_torsion}~%
\ref{lem:properties_of_Whitehead_torsion:composition_formula}
$$\tau_{\fib}'(\con \circ f) - \tau_{\fib}'(f) = 
\tau\bigl(\id \colon (\FIB(f),\xi(e(0),\overline{s})) \to 
(\FIB(f),\xi(e(0),s))\bigr).$$
Now the claim follows from Lemma~\ref{lem:dependence_of_simple_structure_on_(b,s,xi(F_b))} 
and the definition of $\Theta(f)$.
\\[1mm]~%
\ref{lem_thetaprime_compared_with_Theta_and_tau:tau_from_tauprime}
This follows from the definitions.
\\[1mm]~%
\ref{lem_thetaprime_compared_with_Theta_and_tau:vanishing}
This follows from 
assertions~\ref{lem_thetaprime_compared_with_Theta_and_tau:theta_from_tauprime}
and~\ref{lem_thetaprime_compared_with_Theta_and_tau:tau_from_tauprime}.
\end{proof}

\begin{remark}[Mapping tori]
  \label{rem:mapping_tori}
  Given a self-map $v \colon Y \to Y$, define its \emph{mapping torus} $T_v$ by
  the pushout
  \xycomsquare{Y\coprod Y}{\id\coprod\id}{Y}{}{}{\cyl(v)}{}{T_v} where the left
  vertical arrow is the inclusion of the front and the back into the mapping
  cylinder. This corresponds to identifying in $Y \times [0,1]$ the point
  $(y,0)$ with $(v(y),1)$ for all $y \in Y$.

  If $Y$ has the homotopy type of a finite $CW$-complex, we can choose a simple
  structure on $Y$ and equip $\cyl(v)$ with the simple structure such that the
  back inclusion is a simple homotopy equivalence. Equip the mapping torus $T_v$
  with the pushout simple structure (see
  Section~\ref{sec:Simple_structures_and_Whitehead_torsion}).  This simple
  structure is independent of the choice of the simple structure on $Y$ by
  Lemma~\ref{lem:properties_of_Whitehead_torsion}. Hence a mapping torus of a
  self-map of a space with the homotopy type of a finite $CW$-complex has a
  preferred simple structure which we will use in the sequel without any further
  notice.

  Let $Y_1$ and $Y_2$ be homotopy equivalent spaces of the homotopy type of a
  finite $CW$ complex. Consider self-homotopy equivalences $v_i \colon Y_i \to
  Y_i$ for $i = 1,2$ such that $v_2 \circ u \simeq u \circ v_1$ for some
  homotopy equivalence $u\colon Y_1\to Y_2$. Choose a homotopy $h \colon v_2
  \circ u \simeq u \circ v_1$.  Then $h$ induces maps $\cyl(v_1)\to\cyl(v_2)$
  and $T_{u,h}  \colon T_{v_1}\to T_{v_2}$.
  The homotopy class of the latter map depends on the choice of $u$ and $h$, but
  not its Whitehead torsion.  Namely,
  Lemma~\ref{lem:properties_of_Whitehead_torsion} implies
$$\tau\bigl(T_{u,h} \colon T_{v_1} \to T_{v_2} \bigr) = 0.$$
\end{remark}

Consider the pullback
\xycomsquare{\overline{M}}{\overline{e}}{M}{\overline{f}}{f}{\IR}{e}{S^1}
of $f$ with the universal covering $e$.
Consider the map $l_1\colon \overline{M}\to\overline{M}$, induced by the action of
$1\in\IZ\cong\pi_1(S^1)$ by deck transformations. Since
$\overline{e} \circ l_1 = \overline{e}$, the map
$\overline{M} \times [0,1] \to M$ sending $(x,t)$ to $\overline{e}(x)$
induces a homotopy equivalence
$$\widehat{e} \colon T_{l_1} \to M.$$

\begin{lemma} \label{lem:tauprime(f)_and_T_l_1}
We get
\begin{eqnarray*}
\Theta(f) 
& = & 
\overline{e}_*\bigl(\tau(l_1 \colon \overline{M} \to \overline{M})\bigr);
\\
\tau'_{\fib}(f) 
& = &
\tau\bigl(\widehat{e} \colon T_{l_1} \to M\bigr),
\end{eqnarray*}
where we use the preferred simple structures on the mapping torus
$T_{l_1}$ and on the closed manifold $M$, and any simple structure on
$\overline{M}$.
\end{lemma}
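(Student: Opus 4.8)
The plan is to pass to the universal cover of $S^1$ and to recognize both invariants through the mapping torus of the fiber transport. Let $\gamma_0$ denote the standard generator of $\pi_1(S^1,e(0))$ (the loop $s\mapsto e(s)$) and let $\phi\colon F\to F$ be a representative of the fiber transport $t([\gamma_0])$ of the fibration $\widehat f\colon\FIB(f)\to S^1$. The first and main step is to identify $(\FIB(f),\xi(e(0),s))$, through a \emph{simple} homotopy equivalence $\beta\colon T_\phi\to\FIB(f)$, with the mapping torus $T_\phi$ carrying its preferred simple structure from Remark~\ref{rem:mapping_tori}. For this one unwinds the construction of the simple structure in Section~\ref{sec:The_simple_structure_on_a_total_space_of_a_fibration} for the $CW$-structure on $S^1$ with one $0$-cell and one $1$-cell and the given spider $s$: with a characteristic map $Q\colon D^1\to S^1$ chosen so that $Q(\tfrac12)=w(1)$ and a fiber trivialization $T\colon F\times D^1\to Q^*\FIB(f)$, one sees that $\FIB(f)$ is the pushout of $F\longleftarrow F\times S^0\hookrightarrow F\times D^1$ in which the left-hand map, restricted to the two copies of $F$, is homotopic to $\id_F$ on one and to $\phi$ on the other (the half-loop $w(t)=\exp(\pi it)$ in $s$ is what pins this second map down to $\phi$, not $\phi^{-1}$). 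Comparing with the pushout description of $T_\phi$ in Remark~\ref{rem:mapping_tori}, and using the sum and product formulas of Lemma~\ref{lem:properties_of_Whitehead_torsion} (together with $\chi(S^1)=0$), one checks that the pushout simple structure $\xi(e(0),s,\zeta)$ is carried to the preferred mapping-torus structure; in particular $\beta$ is simple and $\tau'_{\fib}(f)=\tau(\mu_f\circ\beta\colon T_\phi\to M)$.

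Next I relate $T_\phi$ to $T_{l_1}$. The map $\alpha\colon\overline M\to F$, $(x,t)\mapsto\bigl(x,(s\mapsto e(t(1-s)))\bigr)$, is a homotopy equivalence (it realizes the standard identification of $\overline M=f^*\mathbb R$ with the homotopy fiber of $f$ over $e(0)$), it satisfies $\mu_f\circ i_{e(0)}\circ\alpha=\overline e$ on the nose, and a path-lifting computation in $\mathbb R$ identifies $\alpha^{-1}\circ\phi\circ\alpha$ with $l_1$ up to homotopy. Since a mapping-torus simple structure is independent of the chosen structure on the underlying space, Remark~\ref{rem:mapping_tori} applies to $\alpha$ together with a chosen homotopy $\alpha\circ l_1\simeq\phi\circ\alpha$ to produce, for \emph{any} simple structure on $\overline M$, a homotopy equivalence $T_\alpha\colon T_{l_1}\to T_\phi$ with $\tau(T_\alpha)=0$; moreover $\mu_f\circ\beta\circ T_\alpha\simeq\widehat e$, because on the $\overline M$-coordinate all three compositions reduce to $\overline e$.

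The two assertions now follow from the composition formula of Lemma~\ref{lem:properties_of_Whitehead_torsion}. For the second, $\tau(\widehat e)=\tau(\mu_f\beta\circ T_\alpha)=\tau(\mu_f\beta)+(\mu_f\beta)_*\tau(T_\alpha)=\tau(\mu_f\beta)=\tau'_{\fib}(f)$. For the first, $\Theta(f)$ is by definition the image under $(\mu_f\circ i_{e(0)})_*$ of the value of $\Theta(\widehat f)$ on $\gamma_0$, that is $(\mu_f\circ i_{e(0)})_*\,\tau\bigl(\phi\colon(F,\zeta)\to(F,\zeta)\bigr)$. Applying the composition formula to $l_1\simeq\alpha^{-1}\phi\alpha$ gives $\tau(l_1)=\alpha_*^{-1}\tau(\phi)+\alpha_*^{-1}(\phi_*-\id)\tau(\alpha)$; pushing forward by $\overline e_*$ and using $\overline e_*\circ\alpha_*^{-1}=(\mu_f\circ i_{e(0)})_*$ as well as $(\mu_f\circ i_{e(0)})_*\circ\phi_*=(\mu_f\circ i_{e(0)})_*$ (which holds since $\mu_f\circ i_{e(0)}\circ\phi=\mu_f\circ i_{e(0)}$ on the nose) makes the second summand vanish, so $\overline e_*\tau(l_1)=(\mu_f\circ i_{e(0)})_*\tau(\phi)=\Theta(f)$. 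The same computation shows the right-hand side does not depend on the simple structures chosen on $\overline M$ and on $F$.

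The only substantial work is the first step: checking that the simple structure assembled on the total space of the fibration from fiber trivializations and pushout/product structures is carried to the preferred mapping-torus structure on $T_\phi$. The orientation bookkeeping there — obtaining $\phi$ rather than $\phi^{-1}$, compatibly with the identification $H^1(S^1;\Wh(\pi(M)))=\Wh(\pi(M))$ via the identity map of $S^1$ — needs care but is forced by the data.
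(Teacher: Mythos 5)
Your proposal is correct and follows essentially the same route as the paper: the explicit homotopy equivalence $\overline{M}\to F$ intertwining $l_1$ with the fiber transport $t$ along the standard generator, the identification of $(\FIB(f),\xi(e(0),s))$ with the mapping torus $T_t$ via the one-cell fiber trivialization, and the mapping-torus functoriality of Remark~\ref{rem:mapping_tori} plus the composition formula. The only difference is expository: you spell out the pushout comparison in the first step, which the paper compresses into ``by definition.''
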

\begin{proof}
There is an explicit homotopy equivalence
$$
h\colon \overline{M}  \xrightarrow{\simeq}  F,
$$
which sends $x \in \overline{M}$ 
to  $(\overline{e}(x),w)\in F$ for the path 
$w(t)=\exp\bigl(2\pi i \overline{f}(x)(1-t)\bigr)$.  
Let $t \colon F \to F$ be given by the canonical fiber transport
along the standard generator of $S^1$. It sends a pair $(x,w) \in F$ 
to the pair $(x,w \ast v)$ for the path $v = e|_{[0,1]}$.
We have by definition
$$\Theta(f) = (\mu_f \circ i)_*\bigl(\tau(t\colon F \to F)\bigr)$$
for any choice of simple structure on $F$, where $i \colon F \to \FIB(f)$ is the 
inclusion and $\mu_F \colon \FIB(f) \to M$ is the canonical map.
We have $h \circ l_1 = t \circ h$.  
Lemma~\ref{lem:properties_of_Whitehead_torsion} 
implies that for any choice of simple structure on $\overline{M}$
$$\Theta(f) = 
(\mu_f \circ i \circ h)_*\bigl(\tau(l_1\colon \overline{M} \to \overline{M})\bigr).$$
Since $\overline{e} = \mu_f \circ i \circ h$, we conclude
$$\Theta(f) 
= \overline{e}_*\bigl(\tau(l_1 \colon \overline{M} \to \overline{M})\bigr).$$
Define
$$\alpha' \colon F \times [0,1] \to \FIB(f), \quad \bigl((x,w),s\bigr) \mapsto (x,w_s),$$
where $w_s$ is the path sending $s' \in [0,1]$ to
$$w_s(s') := \begin{cases}
w\bigl((s+1)s'\bigr) & 0 \le s' \le \frac{1}{s+1};
\\
\exp\bigl(2\pi i(s'(s+1) -1)\bigr) & \frac{1}{s+1} \le s' \le 1.
\end{cases}
$$
The following diagram commutes 
\xycomsquare{F \times[0,1]}{\alpha'}{\FIB(f)}{\pr}{f}{[0,1]}{e}{S^1} 
The map $\alpha'$ induces over $0$ the identity $F \to F$ and over $1$ the map
$t\colon F\to F$. Hence the map
$\alpha'$ induces an explicit homotopy equivalence
$$\alpha \colon  T_t \xrightarrow{\simeq} \FIB(f).$$
By definition
$$\tau'_{\fib}(f) = \tau\bigl(\mu_f \circ \alpha \colon  T_t \to M\bigr).$$
Since $h \circ l_1 = t \circ h$, the map $h$ induces an explicit
homotopy equivalence
$$\beta  \colon T_{l_1} \to T_t.$$
We conclude from Remark~\ref{rem:mapping_tori}
$$\tau'_{\fib}(f) 
= \tau\bigl(\mu_f \circ \alpha \circ \beta \colon T_{l_1} \to M\bigr).$$
Since $\widehat{e} = \mu_f \circ \alpha \circ \beta$, we conclude
\[\tau'_{\fib}(f) = \tau\bigl(\widehat{e} \colon T_{l_1} \to M\bigr).\qedhere\]
\end{proof}

\typeout{-----------------------  Section 7 ------------------------}

\section{Gluing $h$-cobordisms}
\label{sec:Gluing_h-cobordisms}

In this section we consider the illuminating example of a map
$M \to S^1$ which is obtained from gluing the two ends of an $h$-cobordism
together.

Let $(W,\partial_0W,\partial_1W)$ be a topological $h$-cobordism,
i.e., a closed manifold $W$ whose boundary is the
disjoint union $\partial W = \partial_0 W \coprod \partial_1W$
such that both inclusions $i_k\colon \partial_kW \to W$ are
homotopy equivalences. Its Whitehead torsion
\begin{eqnarray}
\tau(W) & \in & \Wh(\partial_0 W)
\label{Whitehead_torsion_of_h-cobordism}
\end{eqnarray}
is defined by the image of the Whitehead
torsion $\tau(i_0\colon \partial_0 W \to W)$ under the isomorphism
$\Wh(\pi(W)) \xrightarrow{\cong} \Wh(\pi(\partial_0W))$ induced by
$i_0^{-1}$. Let $g\colon \partial_1W \to \partial_0W$ be a
homeomorphism. Let $W_g$ be the closed topological manifold obtained from $W$
by gluing $\partial_1W$ to $\partial_0W$ by $g$. 
Choose any continuous map $f'\colon W \to [0,1]$ with
$f'(\partial_0W) = \{0\}$ and $f'(\partial_1W) = \{1\}$. Let
$$f_g\colon W_g \to S^1$$ 
be the map induced by $f'$. Since $[0,1]$ is convex, $f_g$ is unique up to homotopy.
Let $l \colon \partial_0W \to W_g$ be the obvious inclusion.
Let $w_1 \colon \pi_1(\partial_0W) \to \{\pm 1 \}$ be the orientation homomorphism
of $\partial_0W$.  The $w_1$-twisted anti-involution on the group ring
$\IZ \bigl[\pi_1(\partial_0W)\bigr]$ is given by
\begin{eqnarray*}
\overline{\sum_{g \in \pi_1(\partial_0W)} \lambda_g \cdot g} 
& = & 
\sum_{g \in \pi_1(\partial_0W)} w_1(g) \cdot  \lambda_g \cdot g^{-1}.
\label{w_1-twisted_involution}
\end{eqnarray*}
Let 
$$\ast\colon \Wh(\pi(\partial_0W)) \to \Wh(\pi(\partial_0W))$$ be the induced
involution. It corresponds geometrically
to turning an $h$-cobordism upside down \cite[\S 10]{Milnor(1966)}.
Namely, if $W^*$ is the $h$-cobordism with the roles of 
$\partial_0 W$ and $\partial_1 W$ interchanged, we get
(see \cite[Duality Theorem on page~394]{Milnor(1966)})
$$\tau(W^*) =  
(-1)^{\dim(\partial_0 W)} \cdot (i_1)_*^{-1} \circ (i_0)_* \circ \ast(\tau(W)).$$
\begin{lemma}\label{lem_theta_and_tau_fib_for_h_cobordism}
Let 
$$\ast \colon \Wh(\pi(W_g)) \to \Wh(\pi(W_g))$$ 
be the $w_1(W_g)$-twisted involution,
where $w_1(W_g) \colon \pi_1(W_g) \to \{\pm 1\}$ is the orientation homomorphism of $W_g$.
Then:
\begin{enumerate}
\item \label{lem_theta_and_tau_fib_for_h_cobordism:Theta}
We have
$$\Theta(f_g) 
= l_*\bigl(\tau(g \circ i_1^{-1} \circ i_0)\bigr)
= \bigl((-1)^{\dim(W)} \cdot  \ast + \id\bigr)\circ l_*(\tau(W));$$

\item \label{lem_theta_and_tau_fib_for_h_cobordism:tauprime}
We have 
$$\tau_{\fib}'(f_g) = (-1)^{\dim(W)} \cdot \ast \circ l_*(\tau(W))
= l_*(\tau(W)) - \Theta(f_g);$$
\item \label{lem_theta_and_tau_fib_for_h_cobordism:tau}
If $\Theta(f_g) = 0$, then
$$\tau_{\fib}(f_g) = -l_*(\tau(W)).$$
\item \label{lem_theta_and_tau_fib_for_h_cobordism:equality_of_vanishing}
The following assertions are equivalent:
\begin{enumerate}
\item $l_*(\tau(W)) = 0$;
\item $\tau_{\fib}'(f_g) = 0;$
\item $\Theta(f_g) = 0$ and $\tau_{\fib}(f_g) = 0$.
\end{enumerate}
\end{enumerate}
\end{lemma}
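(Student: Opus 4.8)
The plan is to specialise Lemma~\ref{lem:tauprime(f)_and_T_l_1} to $M = W_g$, $f = f_g$; the first job is to describe the pullback $\overline{W_g}$ of $f_g$ along $e$ and the deck transformation $l_1$ concretely. It is the infinite cyclic cover $\overline{W_g} = \bigcup_{n \in \IZ} W_n$ obtained by gluing copies $W_n$ of $W$ end to end via $g$ (so $\partial_1 W_n$ is identified with $\partial_0 W_{n+1}$), with $\overline e$ the covering projection --- restricting on each $W_n$ to the quotient map $l' \colon W \to W_g$ --- and with $l_1$ the shift $W_n \to W_{n+1}$. Because $i_0$ and $i_1$ are cofibrations and homotopy equivalences, a gluing argument applied to the exhaustion of $\overline{W_g}$ by the finite unions $\bigcup_{|n| \le N} W_n$ shows that the inclusion $\iota \colon \partial_0 W \hookrightarrow \overline{W_g}$ of the level-$0$ copy is a homotopy equivalence and $\overline e \circ \iota = l$; moreover, chasing a homotopy inverse of $\iota$ through the identification $i_1(y) \sim i_0(g(y))$ shows that $l_1$ is, up to free homotopy and transport along $\iota$, the self-equivalence $\psi := g \circ i_1^{-1} \circ i_0$ of $\partial_0 W$. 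Finally $l \circ g = l' \circ i_1$, so $l \circ \psi \simeq l$ and hence $l_* \psi_* = l_*$.

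For part~(i), Lemma~\ref{lem:tauprime(f)_and_T_l_1} gives $\Theta(f_g) = \overline e_*(\tau(l_1))$, which is independent of the chosen simple structure on $\overline{W_g}$ since $\overline e_* (l_1)_* = \overline e_*$; transporting $\xi^{\topo}(\partial_0 W)$ along $\iota$ turns this into $\Theta(f_g) = l_*(\tau(\psi))$, the first equality. For the second I would expand $\tau(\psi) = \tau(g \circ i_1^{-1} \circ i_0)$ by the composition formula Lemma~\ref{lem:properties_of_Whitehead_torsion}~\ref{lem:properties_of_Whitehead_torsion:composition_formula}, using $\tau(g) = 0$ (topological invariance of Whitehead torsion, $g$ being a homeomorphism of closed manifolds with their structures $\xi^{\topo}$), the identities $\tau(i_0) = (i_0)_* \tau(W)$ and $\tau(i_1) = (i_1)_* \tau(W^*)$ which are just the definitions of $\tau(W)$ and $\tau(W^*)$, and $\tau(i_1^{-1}) = -(i_1)_*^{-1}\tau(i_1)$; this gives $\tau(\psi) = \psi_* \tau(W) - g_* \tau(W^*)$. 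Substituting the $h$-cobordism Duality Theorem for $\tau(W^*)$ and using $g_*(i_1)_*^{-1}(i_0)_* = \psi_*$, $\dim \partial_0 W = \dim W - 1$, the orientation compatibility of $g$, $\psi$, $l$ (so that the $w_1$-twisted involutions commute with $g_*$, $\psi_*$, $l_*$), and $l_* \psi_* = l_*$, one collapses $l_*(\tau(\psi))$ to $\bigl((-1)^{\dim W} \cdot \ast + \id\bigr) \circ l_*(\tau(W))$.

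Part~(ii) has the real content. By Lemma~\ref{lem:tauprime(f)_and_T_l_1}, $\tau'_{\fib}(f_g) = \tau(\widehat e \colon T_{l_1} \to W_g)$, and by Remark~\ref{rem:mapping_tori} together with the identification of $l_1$ with $\psi$ this equals the torsion of the induced homotopy equivalence $T_\psi \to W_g$. I would present $T_\psi$ and $W_g$ both as pushouts over $\partial_0 W \amalg \partial_0 W$ --- $T_\psi$ as the mapping torus of $\psi$ as in Remark~\ref{rem:mapping_tori}, $W_g$ as $W$ with its two boundary components identified, granting that $\xi^{\topo}(W_g)$ is the resulting pushout simple structure --- so that $T_\psi \to W_g$ comes from compatible maps on the corners: the identities on the folds and the map $\cyl(\psi) \to W$ restricting to $i_0$ and $i_1 \circ g^{-1}$ on the two ends, which has torsion $(i_1)_* \tau(W^*)$. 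The sum formula Lemma~\ref{lem:properties_of_Whitehead_torsion}~\ref{lem:properties_of_Whitehead_torsion:sum_formula} then yields $\tau'_{\fib}(f_g) = l_* g_* \tau(W^*)$, which by the Duality Theorem equals $(-1)^{\dim W} \cdot \ast \circ l_*(\tau(W))$; comparing with~(i) gives the second formula of~(ii). Parts~(iii) and~(iv) are then formal. If $\Theta(f_g) = 0$ then~(i) forces $(-1)^{\dim W}\cdot\ast \circ l_*(\tau(W)) = -l_*(\tau(W))$, so Lemma~\ref{lem_thetaprime_compared_with_Theta_and_tau}~\ref{lem_thetaprime_compared_with_Theta_and_tau:tau_from_tauprime} combined with~(ii) gives $\tau_{\fib}(f_g) = -l_*(\tau(W))$, which is~(iii); and in~(iv) the equivalence of $\tau'_{\fib}(f_g) = 0$ with the conjunction of $\Theta(f_g) = 0$ and $\tau_{\fib}(f_g) = 0$ is Lemma~\ref{lem_thetaprime_compared_with_Theta_and_tau}~\ref{lem_thetaprime_compared_with_Theta_and_tau:vanishing}, while $\tau'_{\fib}(f_g) = 0$ is equivalent to $l_*(\tau(W)) = 0$ by~(ii) since $\ast$ is an involution and $(-1)^{\dim W}$ a unit.

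The main obstacle is not any individual step but the disciplined bookkeeping of signs and orientation characters throughout: pinning down the direction of the identification $l_1 \simeq \psi$ relative to the fixed generator of $\pi_1(S^1)$, verifying the orientation compatibility that makes the various $w_1$-twisted involutions $\ast$ commute with the induced maps $l_*$, $g_*$, $\psi_*$, and inserting the $\dim$-dependent sign of the Duality Theorem correctly. The purely homotopy-theoretic ingredients --- the gluing argument, the relation $l \circ \psi \simeq l$, and the mapping-torus comparisons of Remark~\ref{rem:mapping_tori} --- are routine.
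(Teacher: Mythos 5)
Your proof follows essentially the same route as the paper: both specialize Lemma~\ref{lem:tauprime(f)_and_T_l_1} to the concrete model of the infinite cyclic cover, identify the generating deck transformation with $\psi = g\circ i_1^{-1}\circ i_0$ via the inclusion of the level-zero copy of $\partial_0 W$, compute $\Theta(f_g)$ from the composition formula and Milnor's Duality Theorem, and obtain $\tau'_{\fib}(f_g)$ by exhibiting the comparison $T_\psi\to W_g$ as a map of pushouts (with $\xi^{\topo}(W_g)$ as pushout simple structure) and applying the sum formula; parts~(iii) and~(iv) then follow formally via Lemma~\ref{lem_thetaprime_compared_with_Theta_and_tau}. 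Your version of the pushout in~(ii) uses $\partial_0 W\coprod\partial_0 W$ in the middle on both rows (folding $\partial_1 W$ back via $g^{-1}$), whereas the paper keeps $\partial_0 W\coprod\partial_1 W$ on the bottom and inserts a homotopy $h\colon \id_W\simeq i_1\circ i_1^{-1}$; this is a presentational difference only.

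One caution on the signs, which you yourself flag as the delicate point. Your assertion that the Duality Theorem turns $l_*g_*\tau(W^*)$ into $(-1)^{\dim W}\cdot\ast\circ l_*(\tau(W))$ is off: Milnor's Duality Theorem carries the exponent $\dim(\partial_0 W)=\dim(W)-1$, and that is also the sign the paper's own proof arrives at, and it is the one compatible with the second expression $l_*(\tau(W))-\Theta(f_g)$ once the formula for $\Theta(f_g)$ from~(i) is substituted. As written, your chain of equalities in~(ii) is therefore internally inconsistent with~(i); it happens to land on the claimed value in~(iii) only because you also invoke Lemma~\ref{lem_thetaprime_compared_with_Theta_and_tau}~\ref{lem_thetaprime_compared_with_Theta_and_tau:tau_from_tauprime} literally, whose sign looks suspect for the same reason ($\mu_f$ and $\lambda_f$ are homotopy inverses, so $\tau(\mu_f)=-(\mu_f)_*\tau(\lambda_f)$). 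These two slips cancel numerically, but a careful write-up should track the exponent $\dim(\partial_0 W)$ throughout and reconcile the two displayed expressions in~(ii) explicitly.
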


\begin{proof}~\ref{lem_theta_and_tau_fib_for_h_cobordism:Theta}
Since $\partial_0W$ is part of the boundary of $W$, we get
$w_1(W_g) \circ \pi_1(l) = w_1(\partial_0W)$. 
Hence $l_* \colon \Wh(\pi(\partial_0 W)) \to \Wh(\pi(W_g))$ is compatible with the involutions.

Consider the pullback
\xycomsquare{\overline{W_g}}{\overline{e}}{W_g}{\overline{f_g}}{f_g}{\IR}{e}{S^1}
of $f_g$ with the universal covering $e$.
Notice that $\overline{W_g}$ is obtained from $W \times \IZ$
by identifying $(g(x),n)$ and $(x,n+1)$ for $x \in \partial_1W$ and $n \in \IZ$
and the map $l_1 \colon \overline{W_g} \to \overline{W_g}$ is induced by the map
$(x,n) \mapsto (x,n+1)$. 
The inclusion $l \colon \partial_0W \to W_g$ lifts uniquely to an inclusion
$\overline{l} \colon \partial_0W \to \overline{W_g}$ which satisfies
$\overline{e} \circ \overline{l}(W_g) = \{0\}$ and is a homotopy equivalence. 
In the model above this corresponds
to sending $x$ to $(x,0)$ for $x \in \partial_0W$.  
Now 
$$\Theta(f_g)  = l_*\bigl(\tau(g \circ i_1^{-1} \circ i_0)\bigr) $$
with respect to the simple structure on $\partial_0W$ coming from the structure
of a topological manifold follows Remark~\ref{rem:mapping_tori} 
and Lemma~\ref{lem:tauprime(f)_and_T_l_1} 
since $(g \circ i_1^{-1} \circ i_0) \simeq \overline{l}^{-1}  \circ l_1 \circ \overline{l}$.
Now the assertion~\ref{lem_theta_and_tau_fib_for_h_cobordism:Theta} follows from
\begin{eqnarray*}
l_*\bigl(\tau(g \circ i_1^{-1} \circ i_0)\bigr)
& = &
l_*(\tau(g)) + (l\circ g)_*(\tau(i_1^{-1})) +  (l\circ g \circ i_1^{-1})_*(\tau(i_0))
\\
& = &
0  - l_* \circ (i_0)_*^{-1} \circ (i_1)_* \circ 
(i_1)_*^{-1}(\tau(i_1)) +  l_* \circ (i_0)_*^{-1}(\tau(i_0))
\\
& = &
l_* \bigl(-(i_0)_*^{-1} \circ (i_1)_* \circ 
(i_1)_*^{-1}(\tau(i_1)) +  (i_0)_*^{-1} (\tau(i_0))\bigr)
\\
& = &
l_* \bigl(- (-1)^{\dim(\partial_0 W)} \cdot  \ast(\tau(W)) + \tau(W)\bigr)
\\
& = &
l_* \circ \bigl((-1)^{\dim(W)} \cdot  \ast + \id\bigr)(\tau(W))
\\
& = &
\bigl((-1)^{\dim(W)} \cdot  \ast + \id\bigr)\circ l_*(\tau(W)).
\end{eqnarray*}~%
\ref{lem_theta_and_tau_fib_for_h_cobordism:tauprime}
Consider the commutative diagram
$$\xymatrix@C=25mm{
\partial_0W  \ar[d]^-{\id} & \partial_0W \coprod \partial_0W
\ar[l]^-{\id \coprod g \circ i_1^{-1} \circ i_0}
\ar[r]^-{j}
\ar[d]^-{\id  \coprod i_1^ {-1} \circ i_0}
& \partial_0 W \times [0,1]\ar[d]^-{h \circ (i_0 \times \id_{[0,1]})}
\\
\partial_0W &
\partial_0W \coprod \partial_1W
\ar[l]^-{\id \coprod g}
\ar[r]^-{i_0 \coprod i_1}
& W
}$$
where $j\colon \partial_0W \coprod \partial_0W = 
\partial_0W \times \{0,1\} \to \partial_0 W \times [0,1]$
is the inclusion, $i_1^{-1} \colon W \to \partial_1 W$ is a homotopy inverse of $i_1$ and
$h \colon \id_W \simeq i_1 \circ i_1^{-1}$ is some homotopy.
The pushout of the upper row is the mapping 
torus $T_{g \circ i_1^{-1} \circ i_0}$. The pushout
of the lower row is $W_g$ and the structure $\xi^{\topo}(W_g)$ on the closed manifold $W_g$
is just the pushout of the simple structures. 
All vertical arrows are homotopy equivalences. We obtain a homotopy equivalence
$$\lambda \colon T_{g \circ i_1^{-1} \circ i_0} \to W_g.$$ 
We conclude from Remark~\ref{rem:mapping_tori} and Lemma~\ref{lem:tauprime(f)_and_T_l_1} 
since $\overline{l} \circ (g \circ i_1^{-1} \circ i_0) \simeq l_1 \circ \overline{l}$.
$$\tau'_{\fib}(f_g) = \tau(\lambda).$$
Let $\pr \colon W \to W_g$ be the canonical projection.
We conclude from Lemma~\ref{lem:properties_of_Whitehead_torsion},
assertion~\ref{lem_theta_and_tau_fib_for_h_cobordism:Theta} and the diagram above
\begin{eqnarray*}
\tau_{\fib}'(f_g) & = & \tau(\lambda)
\\
& = &
- (\pr \circ i_1)_*(\tau(i_1^{-1} \circ i_0)) + \pr_*(\tau(h \circ (i_0 \times \id_{[0,1]})))
\\
& = &
- (\pr \circ i_1)_*(\tau(i_1^{-1})) -
(\pr \circ i_1 \circ i_1^{-1})_*(\tau(i_0)) + \pr_*(\tau(h)) 
\\
& & \hspace{60mm} + (\pr \circ h)_*(\tau(i_0 \times \id_{[0,1]}))
\\
& = &
(\pr \circ i_1 \circ i_1^{-1})_*(\tau(i_1)) -  
\pr_*(\tau(i_0)) + \pr_*(\tau(\id_W)) +  \pr_*(\tau(i_0))
\\
& = &
(\pr \circ i_0)_* \circ (i_0^{-1} \circ i_1 \circ i_1^{-1})_*(\tau(i_1))
\\
 & = &
l_* \circ (i_0^{-1} \circ i_1)_*\bigl(\tau(W^*)\bigr)
\\
& = &
(-1)^{\dim(\partial_0 W)} \cdot  l_*\circ \ast(\tau(W))
\\
& = &
(-1)^{\dim(\partial_0 W)} \cdot  \ast \circ l_*(\tau(W))
\\
& = &
l_* (\tau(W)) - \Theta(f_g).
\end{eqnarray*}%
\ref{lem_theta_and_tau_fib_for_h_cobordism:tau}
This follows from Lemma~\ref{lem_thetaprime_compared_with_Theta_and_tau}~%
~\ref{lem_thetaprime_compared_with_Theta_and_tau:tau_from_tauprime} 
and assertion~\ref{lem_theta_and_tau_fib_for_h_cobordism:tauprime}.
\\[1mm]~%
\ref{lem_theta_and_tau_fib_for_h_cobordism:equality_of_vanishing} 
This follows from assertions~\ref{lem_theta_and_tau_fib_for_h_cobordism:Theta},%
\ref{lem_theta_and_tau_fib_for_h_cobordism:tauprime} 
and~\ref{lem_theta_and_tau_fib_for_h_cobordism:tau}.
\end{proof}

Hence $\Theta(f)$ and $\tau_{\fib}(f)$ are given in terms of
$\tau(W)$.  The map induced by $l$ on the fundamental groups can be
identified with the inclusion of $\pi_1(\partial_0 W)$ into the
semi-direct product $\pi_1(\partial_0 W) \rtimes_\phi \IZ =
\pi_1(W_g)$, where $\phi$ is the automorphism of $\pi_1(\partial_0 W)$
induced by $g \circ i_1^{-1} \circ i_0$. The map $l_* \colon
\Wh(\pi_1(W)) \to \Wh(\pi(W_g))$ is injective if $\phi = \id$ but not
in general. So it can happen that the $h$-cobordism $W$ is non-trivial
but both elements $\Theta(f_g)$ and $\tau_{\fib}(f_g)$
vanish. Moreover, for a fixed $h$-cobordism $W$ the answer to the
question whether $\Theta(f_g)$ or $\tau_{\fib}(f_g)$ vanishes, does
in general depend on $\pi_1(g)$.

\typeout{-----------------------  Section 8 ------------------------}

\section{Comparison with Farrell's obstruction over $S^1$}
\label{sec:comparison_with_Farrells_obstruction_over_S1}

In this section we show in the case of $S^1$ as base space that the
torsion obstructions defined in this article are equivalent to the
ones defined by Farrell~\cite{Farrell(1971)}. As in the paper~\cite{Farrell(1971)}
we will work in the smooth category in this section.

Throughout this section we consider a map
$$f \colon M \to S^1$$
from a connected closed smooth manifold $M$ to $S^1$ such that its homotopy
fiber has the homotopy type of a finite $CW$-complex, the
homomorphism $\pi_1(f) \colon \pi_1(M) \to \pi_1(S^1)$ is surjective
and the dimension of $M$ is at least five. 
Let $G$ be the kernel of $\pi_1(f)$. Choose an element $t \in
\pi_1(M)$ which is sent under $\pi_1(f)$ to the standard generator of
the infinite cyclic group $\pi_1(S^1)$.  Conjugation with $t$ induces
an automorphism $\alpha \colon G \xrightarrow{\cong} G$.  We obtain an
isomorphism $G \rtimes_{\alpha} \IZ \xrightarrow{\cong} \pi_1(M)$
which is the identity on $G$ and sends $1 \in \IZ$ to $t \in
\pi_1(M)$. We will use it to identify $G \rtimes_{\alpha} \IZ = \pi_1(M)$.

A \emph{splitting} of $M$ with respect to $f$ is a pair $(N,\nu)$ such
that $N$ is a closed submanifold of $M$ of codimension one together
with a framing $\nu$ of the normal bundle such that under the
Pontrjagin Thom construction $\nu$ corresponds to $f$.  Such a
splitting can be obtained by changing $f$ in its homotopy class to a
smooth map which is transversal to $\{e(0)\} \in S^1$ and taking $N$ to
be the preimage of $e(0)$.  If we take out a tubular neighborhood of
$N$ in $M$, we obtain a cobordism $M_N$ with two boundary components
$\partial_0M_N = N$ to $\partial_1M_N = N$. A splitting is
called a \emph{pseudo fibering} if $\bigl(M_N,\partial_0M_N, \partial_1M_N\bigr)$ 
is an $h$-cobordism. We use the convention that going from
$\partial_0M_N$ to $\partial_1M_N$ corresponds 
to going in the circle in the anticlockwise sense.

Farrell~\cite[Chapter~III]{Farrell(1971)} introduces an element 
$c(f)\in C(\IZ G; \alpha)$ which depends only on the homotopy class of $f$.
It vanishes if and only if there exists a pseudo fibering
(see~\cite[Chapter~V]{Farrell(1971)}). Farrell~\cite[Chapter~IV]{Farrell(1971)}
constructs a duality isomorphism
$C(\IZ G; \alpha) \xrightarrow{\cong} C(\IZ G; \alpha^{-1})$ which sends
$c(f)$ to $c(\con \circ f)$. Hence the vanishing of $c(f)$ is equivalent 
to the vanishing of $c(\con \circ f)$.

Farrell~\cite[Chapter~I]{Farrell(1971)} defines a map $p \colon \Wh(G
\times_{\alpha} \IZ) \to C(\IZ G,\alpha)$.  By inspecting a
highly connected splitting one sees that it sends $\tau'_{\fib}(f)$ to
$c( f)$ (see~\cite[Lemma~3.7]{Farrell(1971)}).
In particular the vanishing of $\tau'_{\fib}(f)$
implies the vanishing of $c(f)$.

Now suppose that $c(f)$ is trivial. Then we can find a pseudo fibering
$(N,\nu)$ for $f$. Recall that associated to it is an $h$-cobordism
$M_N$ obtained from $M$ by deleting a tubular neighborhood of $N$.
Its Whitehead torsion $\tau(M_N)$ lives in $\Wh(\pi(\partial_0 M_N))$.  Let
$i \colon \partial_0 M_N \to M$ be the inclusion and $\pr \colon
\Wh(G) \to \Wh(G) \otimes_{\alpha} \IZ$ be the canonical projection.
Then Farrell~\cite[Chapter~VI]{Farrell(1971)} defines
$$\tau(f) \in \Wh_{\alpha}(G):=\Wh(G)\otimes_\alpha \IZ$$
to be the image of $\tau(M_N)$ under the map $\pr \circ i_*
\colon \Wh(\partial_0 M_N) \to \Wh(G) \otimes_{\alpha} \IZ$.  The
inclusion $G\to G \rtimes_{\alpha} \IZ$ induces a map
$$j \colon \Wh(G) \otimes_{\alpha} \IZ \to \Wh(G \rtimes_{\alpha} \IZ)$$
which is injective by~\cite{Farrell-Hsiang(1970)}.

The identity on $N$ yields a diffeomorphism $g \colon \partial_0M_N
\xrightarrow{\cong} \partial_1M_N$ and we can consider in the notation
of Section~\ref{sec:Gluing_h-cobordisms} the manifold $(M_N)_g$
together with a up to homotopy well-defined map $f' \colon M_N \to
S^1$. We can construct a diffeomorphism $\psi \colon (M_N)_g \to M$
such that up to homotopy $f \circ \psi = f'$. Now we conclude from
Lemma~\ref{lem_theta_and_tau_fib_for_h_cobordism}~%
~\ref{lem_theta_and_tau_fib_for_h_cobordism:tauprime} that the injective map 
$j \colon \Wh(G) \otimes_{\alpha} \IZ \to \Wh(G \rtimes_{\alpha} \IZ)$ 
sends $\tau(f)$ to 
$(-1)^{\dim(W)} \cdot \ast\bigl(\tau_{\fib}'(f)\bigr)$. 
Hence $\tau(f)$ vanishes if and only if $\tau'_{\fib}(f)$ vanishes.

Thus we have shown that the vanishing of $\tau_{\fib}'(f)$ implies
the vanishing of the obstructions $c(f)$ and $\tau(f)$ of Farrell.
Exploiting the main theorem of Farrell~\cite{Farrell(1971)}
that $c(f)$ and $\tau(f)$ vanish if and only if $f$ is 
homotopic to a smooth fiber bundle, we conclude
from Theorem~\ref{the:prop._of_fiber_torsion_obstruction}~%
\ref{the:prop._of_fiber_torsion_obstruction:obstruction}, 
Lemma~\ref{lem_theta_and_tau_fib_for_h_cobordism}~%
\ref{lem_theta_and_tau_fib_for_h_cobordism:equality_of_vanishing}:

\begin{theorem}
\label{the:Comparison_with_Farrell}
Let $f \colon M \to S^1$ be a map from a connected closed smooth manifold $M$
to $S^1$ such that its homotopy fiber has the homotopy type of a
finite $CW$-complex, the homomorphism $\pi_1(f) \colon \pi_1(M) \to \pi_1(S^1)$ 
is surjective and the dimension of $M$ is at least five.
Then the  following assertions are equivalent:
\begin{enumerate}
\item $\tau_{\fib}'(f)$  vanishes;
\item $\Theta(f)$ and $\tau_{\fib}(f)$ vanish;
\item $c(f)$ and $\tau(f)$ vanish;
\item $f$ is homotopic to a smooth fiber bundle.
\end{enumerate}
\end{theorem}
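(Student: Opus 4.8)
The plan is to prove the four-fold equivalence by running a cycle of implications, concretely $(2)\Rightarrow(1)\Rightarrow(3)\Rightarrow(4)\Rightarrow(2)$, since all the substantive ingredients have already been assembled in this section and in Section~\ref{sec:Fiber_torsion_obstructions}. No genuinely new argument is needed; the theorem is a matter of wiring together the identifications established above.

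The implication $(2)\Rightarrow(1)$ is immediate: it is exactly the content of Lemma~\ref{lem_thetaprime_compared_with_Theta_and_tau}~\ref{lem_thetaprime_compared_with_Theta_and_tau:vanishing}, which says that $\tau_{\fib}'(f)$ vanishes whenever $\Theta(f)$ and $\tau_{\fib}(f)$ do. For $(1)\Rightarrow(3)$ I would proceed in two steps. First, using the identification recalled above (inspection of a highly connected splitting together with \cite[Lemma~3.7]{Farrell(1971)}), Farrell's map $p\colon\Wh(G\rtimes_{\alpha}\IZ)\to C(\IZ G;\alpha)$ sends $\tau_{\fib}'(f)$ to $c(f)$; hence $\tau_{\fib}'(f)=0$ forces $c(f)=0$. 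Only now, with $c(f)=0$ available, may one pick a pseudo-fibering $(N,\nu)$, pass to the associated $h$-cobordism $M_N$, identify $M$ with $(M_N)_g$ for the identity gluing $g$, and apply Lemma~\ref{lem_theta_and_tau_fib_for_h_cobordism}~\ref{lem_theta_and_tau_fib_for_h_cobordism:tauprime}, which gives $j(\tau(f))=(-1)^{\dim(M_N)}\cdot\ast(\tau_{\fib}'(f))$. Since $\ast$ is an involution and $j$ is injective by~\cite{Farrell-Hsiang(1970)}, we conclude $\tau(f)=0$, so $(3)$ holds.

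The step $(3)\Rightarrow(4)$ is then precisely the main theorem of Farrell~\cite{Farrell(1971)}: under the standing hypotheses of this section, the vanishing of $c(f)$ and $\tau(f)$ implies that $f$ is homotopic to the projection of a smooth fiber bundle. Finally, $(4)\Rightarrow(2)$ is Theorem~\ref{the:prop._of_fiber_torsion_obstruction}~\ref{the:prop._of_fiber_torsion_obstruction:obstruction}, since a smooth fiber bundle over $S^1$ is in particular a locally trivial fiber bundle of closed topological manifolds. This closes the cycle and yields the stated equivalences.

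The only point that demands care — and the main obstacle, such as it is — is the ordering inside $(1)\Rightarrow(3)$: the relation linking $\tau(f)$ to $\tau_{\fib}'(f)$ only becomes available once a pseudo-fibering has been produced, so one must genuinely first extract $c(f)=0$ from $\tau_{\fib}'(f)=0$ through $p$, and only afterwards invoke Lemma~\ref{lem_theta_and_tau_fib_for_h_cobordism}. Everything else is bookkeeping: the real work — the construction of $\xi(b,s,\zeta)$ and its behaviour under fiber homotopy equivalences, pushouts and gluing of $h$-cobordisms, the mapping-torus description of $\tau_{\fib}'(f)$, and above all the identification of $\tau_{\fib}'(f)$ with Farrell's $c(f)$ and $\tau(f)$ — has already been carried out in the preceding sections.
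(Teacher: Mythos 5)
Your proof is correct and follows essentially the same route as the paper's. The paper establishes, in the discussion preceding the theorem, exactly the two substantive implications you identify in $(1)\Rightarrow(3)$ — first $\tau_{\fib}'(f)=0\Rightarrow c(f)=0$ via Farrell's map $p$, and only then, after extracting a pseudo-fibering, the identity $j(\tau(f))=(-1)^{\dim(M_N)}\cdot\ast(\tau_{\fib}'(f))$ from Lemma~\ref{lem_theta_and_tau_fib_for_h_cobordism}~\ref{lem_theta_and_tau_fib_for_h_cobordism:tauprime} together with injectivity of $j$ — and then closes the cycle through Farrell's main theorem, Theorem~\ref{the:prop._of_fiber_torsion_obstruction}~\ref{the:prop._of_fiber_torsion_obstruction:obstruction}, and the relations between $\tau_{\fib}'$, $\Theta$, $\tau_{\fib}$. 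Your ordering caveat inside $(1)\Rightarrow(3)$ is well taken, and your choice to close $(2)\Rightarrow(1)$ via Lemma~\ref{lem_thetaprime_compared_with_Theta_and_tau}~\ref{lem_thetaprime_compared_with_Theta_and_tau:vanishing} rather than the paper's citation of Lemma~\ref{lem_theta_and_tau_fib_for_h_cobordism}~\ref{lem_theta_and_tau_fib_for_h_cobordism:equality_of_vanishing} is a slightly cleaner citation since it does not presuppose an $h$-cobordism presentation, but the content is the same.
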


\begin{remark} \label{rem_Siebemann_topological_category}
Siebenmann~\cite[Section~1]{Siebenmann(1970_total)} 
says that the main theorem of Farrell~\cite{Farrell(1971)} and hence
Theorem~\ref{the:Comparison_with_Farrell} hold also in the topological category.
\end{remark}

\typeout{-----------------------  Section 9 ------------------------}

\section{A composition formula}
\label{sec:a_composition_formula}

In this section we want to to express $\tau_{\fib}(g \circ f)$ in terms of
  $\tau_{\fib}(f)$ and $\tau_{\fib}(g)$.

  Let $f \colon M \to N$ and $g \colon N \to B$
  be maps of closed path-connected manifolds. Assume that the homotopy
  fibers of both $f$ and $g$ have the homotopy type of a finite
  $CW$-complex.  Then the same is true for the composite $g \circ f$
  since there is a fibration $\hofib(f) \to \hofib(g \circ f) \to
  \hofib(g)$. So the elements $\Theta(f) \in H^1\bigl(N,\Wh(\pi(M))\bigr)$,
  $\Theta(g) \in H^1\bigl(N,\Wh(\pi(N))\bigr)$ and $\Theta(g \circ f) \in
  H^1\bigl(B;\Wh(\pi(M))\bigr)$ are defined. 
  Assume that $\Theta(f)$, $\Theta(g)$ and $\Theta(g \circ f)$
  vanish.  We obtain fiber torsion obstructions 
  (see~Definition~\ref{def:fiber_torsion_obstruction}) 
  \begin{eqnarray*}
  \tau_{\fib}(f) 
  & \in & 
  \cok\bigl(\chi(N) \cdot (\mu_f \circ i_f)_*\colon 
  \Wh(\pi(\hofib(f))) \to \Wh(\pi(M))\bigr);
  \\
  \tau_{\fib}(g) 
  & \in & 
  \cok\bigl(\chi(B) \cdot (\mu_g \circ i_g)_*\colon 
  \Wh(\pi(\hofib(g))) \to \Wh(\pi(N))\bigr);
  \\
  \tau_{\fib}(g \circ f) 
  & \in & 
  \cok\bigl(\chi(B) \cdot (\mu_{g \circ f} \circ i_{g \circ f})_*\colon 
  \Wh(\pi(\hofib(g \circ f))) \to \Wh(\pi(M))\bigr),
  \end{eqnarray*}
  where $i_f$, $i_g$ and $i_{g \circ f}$ denote the inclusions of the homotopy fibers
  over $z \in N$ or $f(z) \in B$ respectively for a fixed base point $z \in N$.

  Given a simple fibration $p \colon E \to B$ over a finite $CW$-complex 
  one can define a \emph{transfer map} 
  \begin{eqnarray}
  p^* \colon \Wh(B) & \to & \Wh(E)
  \label{transfer_map_past}
  \end{eqnarray}
  as follows, provided that the fiber is homotopy
  equivalent to a finite $CW$-complex. For simplicity assume that
  $B$ is path connected, the general case is then done componentwise.
  Given an element $\omega \in \Wh(\pi(B))$, choose a finite $CW$-complex
  $X$ together with a homotopy equivalence $f \colon  X \to B$ such 
  $\tau(f) = \omega$. Consider the pullback diagram
  $$\xymatrix{f^*E \ar[r]_{\overline{f}} \ar[d]_{f^*p} & E \ar[d]_{p}
  \\X \ar[r]_{f} & B
  }
  $$
  Choose a point $x \in X$. Let $f(x) \in B$ be its image under $f$. 
  Choose a simple structure  on the fiber of $E$ over $f(x)$. 
  Use the same simple structure $\zeta$ on the fiber of $p^*E$ over $x$. 
  We obtain well-defined simple structures $\xi(x,\zeta)$ on $f^*E$ and    
  $\xi(f(x),\zeta)$ on $E$ (see Notation~\ref{not:_xi(E,b,xi(F_b))}). Now define
  $$p^*(\omega) = \tau\bigl(\overline{f}\colon 
  (p^*E,(\xi(x,\zeta)) \to (E,(\xi(f(x),\zeta))\bigr).$$
  This is a well-defined homomorphism because of 
  Lemma~\ref{lem:properties_of_Whitehead_torsion},
  Lemma~\ref{lem:compatibility_of_simple_structure_with_pushouts} and 
  Lemma~\ref{lem:compatibility_of_simple_structure_with_simple_pullback}.
  By construction the transfer is compatible with pullbacks and by
  Lemma~\ref{lem:simple_structure_and_fiber_homotopy_equivalences} 
  with fiber homotopy equivalences.
  More information about this transfer map including its algebraic description 
  and computational tools can be found for instance 
  in~\cite{Lueck(1984)}, \cite{Lueck(1986)}  and~\cite{Lueck(1987)}.

  We obtain a transfer map 
  $$f^* \colon \Wh(\pi(N)) \to \Wh(\pi(M))$$
  from the transfer associated in~\eqref{transfer_map_past} 
  to the fibration $\widehat{f} \colon \FIB(f) \to N$ and the
  isomorphism $(\mu_f)_* \colon 
  \Wh\bigl(\pi(\FIB(f))\bigr) \xrightarrow{\cong} \Wh(\pi(M))$ induced by the homotopy
  equivalence $\mu_f \colon \FIB(f) \to M$. Since the transfer is compatible 
  with pullbacks and fiber homotopy equivalences, the transfer 
  induces a map, also denoted by $f^*$,
  \begin{multline}
  f^* \colon \cok\bigl(\chi(N) \cdot (\mu_g \circ i_g)_*\colon 
  \Wh(\pi(\hofib(g))) \to \Wh(\pi(N))\bigr)
  \\ \to 
   \cok\bigl(\chi(B) \cdot (\mu_{g \circ f} \circ i_{g \circ f})_*\colon 
  \Wh(\pi(\hofib(g \circ f))) \to \Wh(\pi(M))\bigr).
  \label{transfer_map_fast}
  \end{multline}
  
Since $\chi(N)=\chi(M)\cdot\chi(B)$, the element $\tau_{\fib}(f)$ defines an element 
 $\overline{\tau}_{\fib}(f)\in \cok\bigl(\chi(B) 
 \cdot (\mu_{g \circ f} \circ i_{g \circ f})_*\colon 
  \Wh(\pi(\hofib(g \circ f))) \to \Wh(\pi(M))\bigr)$.

\begin{theorem}\label{the:composition_formula_for_tau} 
Under the conditions above we get 
  $$\tau_{\fib}(g \circ f) = \overline{\tau}_{\fib}(f) + f^*(\tau_{\fib}(g))$$
in $\cok\bigl(\chi(B) \cdot (\mu_{g \circ f} \circ i_{g \circ f})_*\colon 
  \Wh(\pi(\hofib(g \circ f))) \to \Wh(\pi(M))\bigr)$.

If we additionally assume $\chi(B) = 0$, we get 
$$\tau_{\fib}(g \circ f) = \tau_{\fib}(f) + f^*(\tau_{\fib}(g))$$
in $\Wh(\pi(M))$.
\end{theorem}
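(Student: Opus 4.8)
The plan is to reduce the composition formula, by means of the composition formula for Whitehead torsion, to a single identity about a ``connecting'' homotopy equivalence of homotopy fibrations, and then to recognise that identity as the defining property of the transfer $f^{*}$. First I would introduce the evident map $F\colon\FIB(f)\to\FIB(g\circ f)$ sending $(m,w)$ to $(m,g\circ w)$; it satisfies $\mu_{g\circ f}\circ F=\mu_{f}$ and $F\circ\lambda_{f}=\lambda_{g\circ f}$ on the nose. Fixing the simple structure $\xi_{f}$ on $\FIB(f)$ used in Definition~\ref{def:fiber_torsion_obstruction} to define $\tau_{\fib}(f)$, and a conveniently chosen admissible simple structure $\xi_{g\circ f}$ on $\FIB(g\circ f)$, Lemma~\ref{lem:properties_of_Whitehead_torsion}~\ref{lem:properties_of_Whitehead_torsion:composition_formula} applied to $\lambda_{g\circ f}=F\circ\lambda_{f}$, together with $\mu_{g\circ f}\circ F=\mu_{f}$, gives
\[
\tau_{\fib}(g\circ f)=(\mu_{f})_{*}\tau\bigl(\lambda_{f}\colon(M,\xi^{\topo}(M))\to(\FIB(f),\xi_{f})\bigr)+(\mu_{g\circ f})_{*}\tau\bigl(F\colon(\FIB(f),\xi_{f})\to(\FIB(g\circ f),\xi_{g\circ f})\bigr).
\]
The first summand is a representative of $\tau_{\fib}(f)$, hence equals $\overline{\tau}_{\fib}(f)$ after passing to the target cokernel (and equals $\tau_{\fib}(f)$ on the nose if $\chi(B)=0$, since then the relevant subgroups vanish). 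So it remains to prove $(\mu_{g\circ f})_{*}\tau(F)=f^{*}(\tau_{\fib}(g))$ in the target cokernel.

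Next I would factor $F$ through a pullback. Put $E:=\mu_{g}^{*}\FIB(f)$ with its fibration $r\colon E\to\FIB(g)$, the pullback of $\widehat{f}\colon\FIB(f)\to N$ along $\mu_{g}\colon\FIB(g)\to N$; write $\overline{\mu_{g}}\colon E\to\FIB(f)$ for the structure map, which is a homotopy equivalence. Since $\mu_{g}\circ\lambda_{g}=\id_{N}$, pulling $r$ back along $\lambda_{g}$ identifies $\lambda_{g}^{*}E$ with $\FIB(f)$ and yields a structure map $\overline{\lambda_{g}}\colon\FIB(f)\to E$ with $\overline{\mu_{g}}\circ\overline{\lambda_{g}}\simeq\id$. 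Forming $q:=\widehat{g}\circ r\colon E\to B$ one obtains a canonical fiber homotopy equivalence $\Psi\colon E\to\FIB(g\circ f)$ over $B$ satisfying $\Psi\circ\overline{\lambda_{g}}\simeq F$ and $\mu_{g\circ f}\circ\Psi=\mu_{f}\circ\overline{\mu_{g}}$. Equip $E$ with the simple structure $\xi_{E}$ coming from the fibration $q$, and choose $\xi_{g\circ f}$ to match $\xi_{E}$ under $\Psi$; then Lemma~\ref{lem:simple_structure_and_fiber_homotopy_equivalences} shows $\tau(\Psi)$ is $\chi(B)$ times a fiber torsion, so its $\mu_{g\circ f}$-image vanishes in the target cokernel, while $\overline{\mu_{g}}\circ\overline{\lambda_{g}}\simeq\id$ forces $(\mu_{f}\circ\overline{\mu_{g}})_{*}\tau(\overline{\lambda_{g}})=-(\mu_{f})_{*}\tau(\overline{\mu_{g}})$. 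Combining these, $(\mu_{g\circ f})_{*}\tau(F)\equiv-(\mu_{f})_{*}\tau\bigl(\overline{\mu_{g}}\colon(E,\xi_{E})\to(\FIB(f),\xi_{f})\bigr)$ modulo the target cokernel.

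Now I would identify $\tau(\overline{\mu_{g}})$ with a transfer. The map $\overline{\mu_{g}}\colon E=\mu_{g}^{*}\FIB(f)\to\FIB(f)$ is exactly the pullback structure map of the fibration $\widehat{f}$ along the homotopy equivalence $\mu_{g}\colon\FIB(g)\to N$, so by the very construction of the transfer~\eqref{transfer_map_past} — extended to homotopy equivalences of spaces with simple structure as in Remark~\ref{rem:simple_structures_on_B_and_F-yield_one_on_E}, using its naturality under pullback together with Lemma~\ref{lem:compatibility_of_simple_structure_with_simple_pullback} — one gets $\tau(\overline{\mu_{g}})=\widehat{f}^{*}(\tau(\mu_{g}))$, provided $\xi_{E}$ is the simple structure on $E$ built from $r\colon E\to\FIB(g)$ after equipping $\FIB(g)$ with its own simple structure $\xi_{g}=\xi(b,\zeta)$ (Notation~\ref{not:_xi(E,b,xi(F_b))}). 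Since $\mu_{g}\circ\lambda_{g}=\id_{N}$ we have $\tau(\mu_{g})=-(\mu_{g})_{*}\tau(\lambda_{g})$, hence $\widehat{f}^{*}(\tau(\mu_{g}))=-\widehat{f}^{*}(\tau_{\fib}(g))$. Applying $(\mu_{f})_{*}$ and recalling $f^{*}=(\mu_{f})_{*}\circ\widehat{f}^{*}$ then yields $(\mu_{g\circ f})_{*}\tau(F)\equiv f^{*}(\tau_{\fib}(g))$ in the target cokernel, which together with the reduction above proves the theorem; in the case $\chi(B)=0$ all auxiliary subgroups are trivial and the same computation gives the sharper equality in $\Wh(\pi(M))$.

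The hard part is that the transfer step forces $\xi_{E}$ to be the \emph{two-stage} structure on $E$ (first from $r\colon E\to\FIB(g)$, then feeding in the structure $\xi_{g}$ on $\FIB(g)$), whereas the $\Psi$ step forces $\xi_{E}$ to be the structure coming from the composite fibration $q\colon E\to B$. So the proof hinges on a Fubini-type statement not contained in the excerpt: for composable simple fibrations $E\xrightarrow{r}E'\xrightarrow{p'}B$ over a finite $CW$-complex whose fibers have finite $CW$ homotopy type, the composite $q=p'\circ r$ is again a simple fibration, and its total-space simple structure agrees with the iterated one. I would prove this by induction over the skeleta of $B$, using Lemma~\ref{lem:compatibility_of_simple_structure_with_pushouts} to reduce to the case where $B$ is a disk — where all fibrations in sight are trivial — and there the statement becomes associativity of product simple structures (Remark~\ref{rem:simple_structure_on_trivial_fibration} and Lemma~\ref{lem:properties_of_Whitehead_torsion}~\ref{lem:properties_of_Whitehead_torsion:product_formula}). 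The other delicate point is the cokernel bookkeeping: one must check, using $\chi(N)=\chi(M)\cdot\chi(B)$ and the homotopy $\mu_{g\circ f}\circ i_{g\circ f}\circ k\simeq\mu_{f}\circ i_{f}$ for the fiber inclusion $k\colon\hofib(f)\to\hofib(g\circ f)$, that the first summand legitimately descends to $\overline{\tau}_{\fib}(f)$ and that every fiber-homotopy-equivalence torsion that appears lands in the subgroup being divided out.
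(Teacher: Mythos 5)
Your proposal follows the same essential route as the paper's proof: you introduce $E=\mu_g^*\FIB(f)$, a fiber homotopy equivalence $\Psi$ (the paper's $\phi$) from $E$ to $\FIB(g\circ f)$ over $B$, observe that the torsion of this fiber homotopy equivalence dies in the target cokernel by Lemma~\ref{lem:simple_structure_and_fiber_homotopy_equivalences}, and identify $(\mu_f)_*\tau(\overline{\mu_g})$ with the transfer $f^*(\tau_{\fib}(g))$. The bookkeeping differs: the paper works directly with $\tau(\mu_{g\circ f})$ and the homotopy $\mu_{g\circ f}\circ\phi\simeq\mu_f\circ\overline{\mu_g}$, giving a very short chain of composition-formula equalities, whereas you route through $\lambda_{g\circ f}=F\circ\lambda_f$, then factor $F\simeq\Psi\circ\overline{\lambda_g}$ and invert $\overline{\lambda_g}$ against $\overline{\mu_g}$. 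Both are valid; the paper's version is simply leaner.

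The one place your proposal diverges substantively is the ``Fubini'' step. You correctly identify the tension between the two simple structures on $E$ (one from the composite fibration $q\colon E\to B$, the other from the iterated construction via $r\colon E\to\FIB(g)$), but you then propose to prove that these two structures actually \emph{agree}. The paper explicitly avoids this: it only shows that the torsion of the identity between the two structures becomes zero once pushed into the cokernel of $\chi(B)\cdot(\mu_{g\circ f}\circ i_{g\circ f})_*$ --- and it even remarks that the two simple structures ``are not necessarily the same.'' This weaker statement is all that is needed, and it is what a modification of Lemmas~\ref{lem:simple_structure_and_fiber_homotopy_equivalences} and~\ref{lem:compatibility_of_simple_structure_with_pushouts} yields. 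Your proposed reduction of the full Fubini claim to the disc case is also not quite right: over a disc $D^n\subset B$ the fibrations $q|_{D^n}$ and $\widehat{g}|_{D^n}$ are fiber homotopically trivial, but $r|_{D^n}\colon E|_{D^n}\to\FIB(g)|_{D^n}$ is a fibration over a (generally non-contractible) space $\simeq\hofib(g)\times D^n$ and need not be trivial, so the statement does not collapse to associativity of product simple structures; one would need a second induction over the cells of $\hofib(g)$, and at that point the spider and base-point dependence re-enters. I would recommend adopting the paper's weaker, cokernel-level comparison of the two structures rather than trying to prove exact agreement.
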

\begin{proof}
Consider the following commutative diagram
$$\xymatrix{
\mu_g^*\FIB(f) \ar[r]^-{\overline{\mu_g}} \ar[rd]_-{\mu_g^*\widehat{f}}
\ar@/^{2em}/[rrr]^-{\phi}
& \FIB(f) \ar[r]^-{\mu_f} \ar[dr]^-{\widehat{f}} 
&  M \ar[d]^-{f} &
\FIB(g \circ f) \ar[ldd]^-{\widehat{g \circ f}} \ar[l]_-{\mu_{g \circ f}}
\\
& \FIB(g) \ar[r]^-{\mu_g} \ar[dr]_-{\widehat{g}}
& N \ar[d]^-{g}
\\
& & B & 
}
$$
Here $\mu_g^*\widehat{f} \colon \mu_g^*\FIB(f) \to \FIB(g)$ is the pullback of the
fibration $\widehat{f} \colon \FIB(f) \to N$ with $\mu_g \colon \FIB(g) \to N$
and $\phi$ is an appropriate fiber homotopy equivalence 
of fibrations over $B$ from 
$\widehat{g} \circ \mu_g^*\widehat{f}$ to $\widehat{g \circ f}$
such that $\mu_{g \circ f} \circ \phi$
is homotopic to $\widehat{\mu_f} \circ \overline{\mu_g}$. 
Since $\widehat{g \circ f}$ is simple, the fibration 
$\widehat{g} \circ \mu_g^*\widehat{f}$
is also simple

We equip $M$, $N$ and $B$ with canonical simple structure coming 
from the manifold structure. 

Choose simple structures on $\hofib(f)$, $\hofib(g)$, $\hofib(g \circ f)$
and the fiber of the fibration given by the composite $\widehat{g} \circ \mu_g^*\widehat{f}$.
We equip the total spaces of the simple fibrations over finite $CW$-complexes
$\widehat{g} \colon \FIB(g)\to B$, $\widehat{f} \colon \FIB(f) \to N$,
$\widehat{g \circ f} \colon \FIB(g \circ f)\to B$
and $\widehat{g} \circ \mu_g^*\widehat{f} \colon \mu_g^*\FIB(f) \to B$
with the simple structure coming from Notation~\ref{not:_xi(E,b,xi(F_b))}.

Since $\phi$ is a fiber homotopy equivalence, we conclude from 
Lemma~\ref{lem:compatibility_of_simple_structure_with_pushouts}
$$\tau(\phi) = 0$$
in $\cok\bigl(\chi(B) \cdot (\mu_{g \circ f} \circ i_{g \circ f})_*\colon 
\Wh(\pi(\hofib(g \circ f)) \to \Wh(\pi(M))\bigr)$. 

There is a second simple structure on  $\mu_g^*\FIB(f)$
which comes from Remark~\ref{rem:simple_structures_on_B_and_F-yield_one_on_E} 
applied to the fibration $\mu_g^*\widehat{f} \colon \mu_g^*\FIB(f) \to \FIB(g)$
and the simple structure defined on $\FIB(g)$ above. 
With respect to this simple structure we conclude from the definition  of the transfer maps
$$f^*(\tau(\mu_g)) = 
(\mu_f)_*\bigl(\tau(\overline{\mu_g})\bigr).$$
These two simple structures on  $\mu_g^*\FIB(f)$
are not necessarily the same. But a modification of the proof
of Lemma~\ref{lem:simple_structure_and_fiber_homotopy_equivalences} and
Lemma~\ref{lem:compatibility_of_simple_structure_with_pushouts} show
that the image of the Whitehead torsion of the identity map on $\mu_g^*\FIB(f)$
under the isomorphism 
$$(\mu_f \circ \overline{\mu_g})_* \colon 
\Wh\bigl(\pi(\mu_g^*\FIB(f))\bigr) \xrightarrow{\cong} \Wh(M)$$
with respect to these two different simple structures becomes zero when regarded in 
$\cok\bigl(\chi(B) \cdot (\mu_{g \circ f} \circ i_{g \circ f})_*\colon 
\Wh(\pi(\hofib(g \circ f)) \to \Wh(\pi(M))\bigr)$.
Hence it does not matter which simple structure we use.

From the composition formula for Whitehead torsion and the equalities above 
we conclude in $\cok\bigl(\chi(B) \cdot (\mu_{g \circ f} \circ i_{g \circ f})_*\colon 
\Wh(\pi(\hofib(g \circ f))) \to \Wh(\pi(M))\bigr)$
\begin{eqnarray*}
\tau_{\fib}(g \circ f) 
& = & 
\tau(\mu_{g \circ f})
\\
& = & 
\tau(\mu_{g \circ f}) + (\mu_{g \circ f})_*(\tau(\phi))
\\
& = & 
\tau\bigl(\mu_{g \circ f} \circ \phi\bigr)
\\
& = & 
\tau\bigl(\mu_f \circ \overline{\mu_g}\bigr)
\\
& = & 
\tau(\mu_f) + (\mu_f)_*\bigl(\tau(\overline{\mu_g})\bigr)
\\
& = & 
\tau(\mu_f) + f^*\bigl(\tau(\mu_g)\bigr)
\\
& = & 
\bar\tau_{\fib}(f) + f^*(\tau_{\fib}(g)).\qedhere
\end{eqnarray*}
\end{proof}

\typeout{-----------------------  Section 10 ------------------------}

\section{Poincar\'e torsion}
\label{sec:Poincare_torsion}

The definitions and probably most of the properties of the Poincar\'e torsion
are known to the experts but since we could not find a good reference in the literature,
we elaborate on them in this section. Some information can be found for 
instance in~\cite[Proposition~26]{Korzieniewski(2005)}.

Let $X$ be a finite $CW$-complex.
Suppose that $X$ is connected. Denote by
$\pi$ the fundamental group $\pi_1(X)$.
Let $p\colon \widetilde{X} \to X$ be the universal covering.
Denote by $C_*(\widetilde{X})$ the cellular $\IZ \pi$-chain complex. 
Let $C^{n-*}(\widetilde{X})$
denote the dual $\IZ\pi$-chain complex, where we always use the
involution on $\IZ \pi$ sending $g$ to $w(g) \cdot g^{-1}$
for a given homomorphism $w \colon \pi_1(X) \to \{\pm 1\}$.
We call $X$ an \emph{$n$-dimensional Poincar\'e complex},
if there exist a so called 
\emph{orientation homomorphism} $w = w_1(X) \colon \pi_1(X) \to \{\pm 1\}$
and an element called \emph{fundamental class}
$[X] \in H_n(X;\IZ^w)$ and 
such that the up to $\IZ \pi$-chain homotopy uniquely defined $\IZ \pi$-chain map
\begin{eqnarray}
& - \cap [X]\colon C^{n-*}(\widetilde{X})
\to C_*(\widetilde{X}) &
\label{Poincare_chain_duality_map:closed_case}
\end{eqnarray}
is a $\IZ \pi$-chain homotopy equivalence. Here and in the sequel $\IZ^w$ is the $\IZ\pi$-module
whose underlying abelian group is $\IZ$ and for which $g \in \pi$ acts by multiplication with $w(g)$.
If a finite $CW$-complex
carries some structure of a Poincar\'e complex, then
$H_n(X;\IZ^w)$ is infinite cyclic and the fundamental class $[X]$ is a generator and hence
unique up to sign, and one can rediscover
the orientation homomorphism $w$ from $X$ as a $CW$-complex
(see~\cite[paragraph before~1.3]{Lueck-Ranicki(1992)}).

If $X$ is not connected, we require that each component
$C \in \pi_0(X)$ is an
$n$-dimensional Poincar\'e complex in the above sense.

Let $(X,\partial X)$ be a finite $CW$-pair such that $X$ is
$n$-dimensional and $\partial X$ is $(n-1)$-dimensional.
Suppose that $X$ is connected. Denote by
$\pi$ the fundamental group $\pi_1(X)$.
Let $p\colon \widetilde{X} \to X$ be the universal covering and put
$\widetilde{\partial X} = p^{-1}(\partial X)$.
Denote by $C_*(\widetilde{X})$ and
$C_*(\widetilde{X},\widetilde{\partial X})$ the
cellular $\IZ \pi$-chain complexes. Let $C^{n-*}(\widetilde{X})$
and $C^{n-*}(\widetilde{X},\widetilde{\partial X})$
denote the dual $\IZ\pi$-chain complexes, where we always use the
involution on $\IZ \pi$ sending $g$ to $w(g) \cdot g^{-1}$
for a given homomorphism $w \colon \pi_1(X) \to \{\pm 1\}$.
We call $(X,\partial X)$ a \emph{$n$-dimensional Poincar\'e pair},
if there exists  a  so called \emph{orientation homomorphism} $w = w_1(X) \colon \pi_1(X) \to \{\pm 1\}$
and an element called \emph{fundamental class}
$[X,\partial X] \in H_n(X,\partial X;\IZ^w)$ 
such that the up to $\IZ \pi$-chain homotopy uniquely defined $\IZ \pi$-chain map
\begin{eqnarray}
& - \cap [X,\partial X]\colon C^{n-*}(\widetilde{X})
\to C_*(\widetilde{X},\widetilde{\partial X})
\label{Poincare_chain_duality_map}
\end{eqnarray}
is a $\IZ \pi$-chain homotopy equivalence
and $\partial X$ is a Poincar\'e complex with respect to the fundamental classes
of each component of $\partial X$ coming from the image of the 
fundamental class of $X$ under the boundary
homomorphism $H_n(X,\partial X;\IZ^w) \to H_{n-1}(\partial X;\IZ^w)$.
 
If $X$ is not connected, we require that for each component
$C \in \pi_0(X)$ the pair $(C,C \cap \partial X)$ is an
$n$-dimensional Poincar\'e pair in the sense above. (To simplify notation, we use the symbol $w$ for the orientation homomorphisms on all the Poincar\'e complexes and pairs occuring.)

The chain complexes
$C^{n-*}(\widetilde{X})$ and $C_*(\widetilde{X},\widetilde{\partial X})$
inherit from the $CW$-structure a cellular $\IZ \pi$-basis which is unique
up to permuting the elements of the basis or multiplying with
elements of the form $\pm g$ for $g \in \pi$. Hence one can associate
to the $\IZ\pi$-chain homotopy equivalence defined
in~\eqref{Poincare_chain_duality_map}  
its Whitehead torsion $\tau\bigl(\cap[X,\partial X]\bigr) \in \Wh(\pi)$. 
Since $X$ is connected and hence
$H^0(X) = \IZ$, we get from Poincar\'e duality
that $H_n(X,\partial X;\IZ^w) = \IZ$ and $[X,\partial X]$ must be a generator.
If we replace $[X,\partial X]$ by $-[X,\partial X]$, we get
$\tau\bigl(- \cap (-[X,\partial X])\bigr) = \tau\bigl(- \cap [X,\partial X]\bigr)$ since
the Whitehead torsion satisfies the  composition formula
$\tau(g \circ f) = \tau(f) + \tau(g)$ and $\tau(-\id) = 0$.

\begin{definition} \label{def:Poincare_torsion}
Let $(X,\partial X)$ be an $n$-dimensional Poincar\'e pair. If $X$
is connected, define its Poincar\'e torsion
$$\rho(X,\partial X) \in \Wh(\pi(X))$$
by the Whitehead torsion 
$\tau\bigl(- \cap [X,\partial X] \colon C^{n-*}(\widetilde{X})
\to C_*(\widetilde{X},\widetilde{\partial X})\bigr)$ for any choice
of fundamental class $[X,\partial X] \in H_n(X,\partial X;\IZ^w)$.

If $X$ is not connected, define
$$\rho(X,\partial X) \in \Wh(\pi(X))= \bigoplus_{C \in \pi_0(X)} \Wh(\pi(C))$$
by the various elements $\rho(C,C \cap \partial X) \in \Wh(\pi(C))$.

We call an $n$-dimensional Poincar\'e pair $(X,\partial X)$ \emph{simple}
if $\rho(X,\partial X) = 0$.
\end{definition}

Next we collect the basic properties of this invariant
(see also~\cite[Proposition~2.7]{Wall(1999)}).
Notice that because of Theorem~\ref{the:prop_of_Poinc._tor}~%
\ref{the:prop_of_Poinc._tor:homotopy_invariance}
we can extend the Definition~\ref{def:Poincare_torsion} of $\rho(X,A)$ 
to pairs of spaces $(X,A)$ with simple structures 
for which there exists a simple homotopy equivalence
$(X,A) \to (Y,\partial Y)$ with a finite Poincar\'e pair as target.
This applies in particular to $(X,A) = (M,\partial M)$ for a compact topological
manifold $M$ with boundary $\partial M$.

\begin{theorem} \label{the:prop_of_Poinc._tor}
\begin{enumerate}

\item \label{the:prop_of_Poinc._tor:behaviour_under_involution}
If $X$ is an $n$-dimensional Poincar\'e complex, then
$$\rho(X) = (-1)^n \cdot \ast\bigl(\rho(X)\bigr),$$
where $\ast \colon \Wh(\pi(X)) \to \Wh(\pi(X))$ is the 
$w_1(X)$-twisted involution. 

More generally, we get for an $n$-dimensional Poincar\'e pair $(X,\partial X)$
$$(j_{\partial X})_*\bigl(\rho(\partial X)\bigr)
= (-1)^n \cdot \ast\bigl(\rho(X,\partial X)\bigr) - \rho(X,\partial X),$$
where $j_{\partial X} \colon \partial X \to X$ is the inclusion;

\item \label{the:prop_of_Poinc._tor:homotopy_invariance}
If $(f,\partial f)\colon (X,\partial X) \to (Y,\partial Y)$
is a homotopy equivalence of $n$-dimensional Poincar\'e pairs, then
$$\hspace{13mm}
\rho(Y,\partial Y) - f_*\bigl(\rho(X,\partial X)\bigr)
\; = \;
\tau(f) + (-1)^n \cdot \ast\bigl(\tau(f)\bigr) - 
(j_{\partial M})_*\bigl(\tau(\partial f)\bigr),
$$
where $f_*\colon \Wh(\pi(X)) \to \Wh(\pi(Y))$ 
and $(j_{\partial M})_*\colon \Wh(\pi(\partial Y)) \to \Wh(\pi(Y))$
are the homomorphisms induced by $f\colon X \to Y$ and the inclusion
$j_{\partial M}\colon \partial Y \to Y$, $\tau(f) \in \Wh(\pi(Y))$ and 
$\tau(\partial f) \in \Wh(\pi(\partial Y))$ denote the Whitehead torsion
of the homotopy equivalences of finite $CW$-complexes
$f$ and $\partial f$, and $\ast \colon \Wh(\pi(Y)) \to \Wh(\pi(Y))$ is the 
$w_1(Y)$-twisted involution;

\item \label{the:prop_of_Poinc._tor:glueing_formula}
Let $(X,\partial X)$ and $(Y,\partial Y)$ be $n$-dimensional Poincar\'e
pairs such that $X$ and $Y$ are connected. Let
$f\colon \partial X \to \partial Y$ be a homotopy equivalence.
Let $X \cup_f Y$ be the space obtained by gluing $X$ to $Y$ along $f$.
Denote by $j_X\colon X \to X\cup_f Y$, $j_Y\colon Y \to X\cup_f Y$  
and $j_{\partial Y}\colon \partial Y \to X\cup_f Y$ the canonical inclusions. 
Then $X \cup_f Y$ is a connected $n$-dimensional Poincar\'e complex and
\begin{multline*}
\hspace{14mm} \rho(X\cup_f Y) \; = \;
(-1)^n \cdot \ast \circ (j_X)_*\bigl(\rho(X,\partial X)\bigr) 
+ (j_Y)_*\bigl(\rho(Y,\partial Y)\bigr)
\\
+ (j_{\partial Y})_*\bigl(\tau(f)\bigr),
\end{multline*}
where $\ast \colon \Wh\bigl(\pi(Z \cup_f Y)\bigr) \to \Wh\bigl(\pi(Z \cup_f Y)\bigr)$ 
is the $w_1(Z \cup_f Y)$-twisted involution.

\item \label{the:prop_of_Poinc._tor:product_formula}
Let $(X,\partial X)$ resp.  $(Y,\partial Y)$ be a $m$- resp. $n$-dimensional
Poincar\'e pair such that $X$ and $Y$ are connected. Then
$$(X,\partial X) \times (Y,\partial Y) =
(X \times Y,X \times \partial Y \cup \partial X \times Y)$$
is an $(n+m)$-dimensional Poincar\'e pair with
\begin{multline*}
\hspace{13mm} 
\rho\bigl((X,\partial X) \times (Y,\partial Y)\bigr) 
\; = \;\chi(X,\partial X) \cdot
(k_Y)_*\bigl(\rho(Y,\partial Y)\bigr)
\\ + \chi(Y,\partial Y) \cdot (k_X)_*\bigl(\rho(X,\partial X)\bigr),
\end{multline*}
where $k_X\colon X \to X \times Y$ and $k_Y\colon Y \to X \times Y$ are the
inclusions and $\chi$ denotes the Euler characteristic;

\item \label{the:prop_of_Poinc._tor:triviality_for_manifolds}
Let $M$ be a compact	 topological manifold
(possibly with boundary $\partial M$). Then
$$\rho(M,\partial M) = 0.$$

\end{enumerate}

\end{theorem}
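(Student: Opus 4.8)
The plan is to reduce all five parts to the behaviour of Whitehead torsion under dualisation of finite based free $\IZ\pi$-chain complexes. Recall that for a chain homotopy equivalence $\phi\colon C_*\to D_*$ of such complexes the $n$-dual $\phi^{n-*}\colon D^{n-*}\to C^{n-*}$ is again one and $\tau(\phi^{n-*})=(-1)^n\cdot\ast(\tau(\phi))$, where $\ast$ is the involution on the relevant Whitehead group induced by the orientation homomorphism $w$. For part~\ref{the:prop_of_Poinc._tor:behaviour_under_involution} in the closed case I would observe that the Poincar\'e duality chain map $-\cap[X]\colon C^{n-*}(\widetilde X)\to C_*(\widetilde X)$ is, up to sign and $\IZ\pi$-chain homotopy, equal to its own $n$-dual (the chain-level symmetry of the cap product, using $\overline{[X]}=\pm[X]$); since $\tau(-\id)=0$ this gives $\rho(X)=\tau(-\cap[X])=\tau\bigl((-\cap[X])^{n-*}\bigr)=(-1)^n\ast(\rho(X))$. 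For a pair, the two cap products $C^{n-*}(\widetilde X)\to C_*(\widetilde X,\widetilde{\partial X})$ and $C^{n-*}(\widetilde X,\widetilde{\partial X})\to C_*(\widetilde X)$ associated with $[X,\partial X]$ are $n$-dual to one another up to sign and homotopy, so their torsions differ by the operator $(-1)^n\ast$; combining this with the long exact sequences of the pair, additivity of torsion under short exact sequences, and the fact that $\partial X$ is itself a Poincar\'e complex gives the relative identity in~\ref{the:prop_of_Poinc._tor:behaviour_under_involution}.

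For part~\ref{the:prop_of_Poinc._tor:homotopy_invariance} I would compare the two duality maps through the square whose horizontal arrows are the cap products, whose left vertical arrow is $f^*\colon C^{n-*}(\widetilde Y)\to C^{n-*}(\widetilde X)$ and whose right vertical arrow is $f_*\colon C_*(\widetilde X,\widetilde{\partial X})\to C_*(\widetilde Y,\widetilde{\partial Y})$; it commutes up to $\IZ\pi$-chain homotopy because $f_*[X,\partial X]=\pm[Y,\partial Y]$ by naturality of the cap product. Applying the composition formula of Lemma~\ref{lem:properties_of_Whitehead_torsion}~\ref{lem:properties_of_Whitehead_torsion:composition_formula} to this square, substituting $\tau(f^*)=(-1)^n\ast\tau(f)$ (transported back along $f$), and writing the torsion of the relative chain map $f_*$ as $\tau(f)-(j_{\partial Y})_*\tau(\partial f)$ by the sum formula, yields the formula in~\ref{the:prop_of_Poinc._tor:homotopy_invariance}.

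Parts~\ref{the:prop_of_Poinc._tor:glueing_formula} and~\ref{the:prop_of_Poinc._tor:product_formula} are then formal consequences of the same ideas. For~\ref{the:prop_of_Poinc._tor:product_formula} one uses that the relative cellular chain complex of $(X,\partial X)\times(Y,\partial Y)$ is $C_*(\widetilde X,\widetilde{\partial X})\otimes_{\IZ}C_*(\widetilde Y,\widetilde{\partial Y})$, that $C^{(m+n)-*}$ of the product is correspondingly $C^{m-*}(\widetilde X)\otimes C^{n-*}(\widetilde Y)$, and that under these identifications the duality map of the product is the tensor product of the two duality maps; the algebraic analogue of the product formula in Lemma~\ref{lem:properties_of_Whitehead_torsion}~\ref{lem:properties_of_Whitehead_torsion:product_formula}, namely $\tau(\phi\otimes\psi)=\chi(C)\cdot\tau(\psi)+\chi(D)\cdot\tau(\phi)$ for $\phi\colon C\to C'$ and $\psi\colon D\to D'$, together with $\chi\bigl(C^{k-*}\bigr)=\chi$ of a $k$-dimensional Poincar\'e pair, then gives~\ref{the:prop_of_Poinc._tor:product_formula}. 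For~\ref{the:prop_of_Poinc._tor:glueing_formula} I would compute $C_*$ of $X\cup_f Y$ through the Mayer--Vietoris sequence of the two pieces $X$ and $Y$ glued along $\cyl(f)$; in that description the duality map of $X\cup_f Y$ is assembled from the cap product $C^{n-*}(\widetilde X,\widetilde{\partial X})\to C_*(\widetilde X)$ on the $X$-side (whence the term $(-1)^n\ast\circ(j_X)_*(\rho(X,\partial X))$ by~\ref{the:prop_of_Poinc._tor:behaviour_under_involution}), from $C^{n-*}(\widetilde Y)\to C_*(\widetilde Y,\widetilde{\partial Y})$ on the $Y$-side (whence $(j_Y)_*(\rho(Y,\partial Y))$), and from the comparison map induced by $f$ on the overlap (whence $(j_{\partial Y})_*(\tau(f))$), and the sum formula for torsion assembles these into the stated identity.

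The substantial part is~\ref{the:prop_of_Poinc._tor:triviality_for_manifolds}, and I expect the genuine difficulty to be the passage from the classical smooth or PL statement to arbitrary topological manifolds in all dimensions. If $\dim M\le 5$ I would take a closed smooth simply connected manifold $N$ with $\dim N\ge 6$ and $\chi(N)=1$, e.g.\ $(\IC\IP^2\times\IC\IP^2)\,\sharp\,4(S^3\times S^5)$ as in the proof of Lemma~\ref{lem:simple_structures_on_bundles}; applying part~\ref{the:prop_of_Poinc._tor:product_formula} to $(M,\partial M)\times(N,\emptyset)$ gives $\rho\bigl(M\times N,\partial(M\times N)\bigr)=\chi(M,\partial M)\cdot(k_N)_*(\rho(N))+\chi(N)\cdot(k_M)_*(\rho(M,\partial M))=(k_M)_*(\rho(M,\partial M))$, because $\chi(N)=1$ and $\rho(N)=0$ by the case $\dim\ge 6$, while $(k_M)_*\colon\Wh(\pi(M))\to\Wh(\pi(M\times N))$ is an isomorphism since $N$ is simply connected; this reduces the claim to $\dim M\ge 6$. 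In that range $M$ admits a handle decomposition by Kirby-Siebenmann, hence is simple homotopy equivalent, with respect to $\xi^{\topo}(M)$, to a finite $CW$-pair with one cell per handle; the dual handle decomposition, together with the classical computation that the intersection number of a handle with its dual handle is a unit of $\IZ\pi$, identifies the Poincar\'e--Lefschetz duality map $-\cap[M,\partial M]$, up to sign and $\IZ\pi$-chain homotopy, with the cellular chain isomorphism induced by the identity between the two $CW$-structures on $M$; since both $CW$-structures represent $\xi^{\topo}(M)$ (Kirby-Siebenmann, Essay~III, \S 5), that identity is a simple homotopy equivalence, so $\tau(-\cap[M,\partial M])=0$, i.e.\ $\rho(M,\partial M)=0$, and part~\ref{the:prop_of_Poinc._tor:homotopy_invariance} shows this does not depend on the chosen finite $CW$-model. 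The one step I would need to carry out with real care is exactly this last chain-level identification in the topological category: its smooth and PL incarnations are classical (Milnor, Wall), and the topological version must be based on the Kirby-Siebenmann handlebody theory and the topological invariance of Whitehead torsion rather than on a triangulation of $M$, which need not exist.
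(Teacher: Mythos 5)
Your treatment of parts (\ref{the:prop_of_Poinc._tor:behaviour_under_involution})--(\ref{the:prop_of_Poinc._tor:product_formula}) follows essentially the same route as the paper: self-duality of the cap product for (\ref{the:prop_of_Poinc._tor:behaviour_under_involution}) (the paper makes this precise by pointing to the $Q^n$-cocycle structure from Ranicki, and works out the pair case via explicit mapping cylinders and cones of the chain maps $p_*$ and $p^{n-*}$), the commuting duality square plus composition/sum formulas for (\ref{the:prop_of_Poinc._tor:homotopy_invariance}), a Mayer--Vietoris assembly of the duality map for (\ref{the:prop_of_Poinc._tor:glueing_formula}) (the paper first passes to an inclusion by replacing $f$ with the pushout $Z = X\cup_f\partial Y$, which tidies up the bookkeeping), and the K\"unneth isomorphism plus the chain-level product formula for (\ref{the:prop_of_Poinc._tor:product_formula}). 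These are all correct and match the paper's argument in substance.

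The one place where you take a genuinely different route is part (\ref{the:prop_of_Poinc._tor:triviality_for_manifolds}). The paper simply cites Kirby--Siebenmann, Essay III, Theorem 5.13, which states verbatim that a compact topological manifold pair $(M,\partial M)$ is simple-homotopy equivalent to a \emph{simple} finite Poincar\'e pair; together with part (\ref{the:prop_of_Poinc._tor:homotopy_invariance}) this immediately gives $\rho(M,\partial M)=0$. You instead reconstruct a proof: cross with a $1$-connected closed $N$ of Euler characteristic $1$ to reduce the low-dimensional case to $\dim\ge 6$ (an argument parallel to the paper's own proof of Lemma~\ref{lem:simple_structures_on_bundles}), then use a Kirby--Siebenmann handle decomposition and its dual to show the duality chain map is, up to chain homotopy and sign, the comparison of two $CW$-structures that both represent $\xi^{\topo}(M)$, hence simple. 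Your reduction step is sound (note $\pi_1(M\times N)\cong\pi_1(M)$ so $(k_M)_*$ is an isomorphism, and $\chi(N)=1$, $\rho(N)=0$ kill the unwanted term). The step you honestly flag as delicate --- that in the topological category the Poincar\'e--Lefschetz duality map is chain homotopic to the comparison between a handle-CW-structure and its dual, simply with respect to $\xi^{\topo}(M)$ --- is precisely what the cited Kirby--Siebenmann theorem encapsulates; your sketch is in effect a summary of the argument behind that reference. So your proof is correct modulo that flagged step, and the difference is that you reprove the input that the paper treats as a black box. The citation is cleaner; your reconstruction is more self-contained and makes the reduction-to-high-dimensions trick explicit, which the paper does not need to do here but does use elsewhere.

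One small imprecision worth noting: in part (\ref{the:prop_of_Poinc._tor:product_formula}) of your sketch you should use $\partial(X\times Y) = X\times\partial Y \cup \partial X\times Y$ (as in the statement); when you apply this to $(M,\partial M)\times(N,\emptyset)$ the boundary is $\partial M\times N$, which is what you write, so the application is fine.
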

\begin{proof}%
\ref{the:prop_of_Poinc._tor:behaviour_under_involution}
The $\IZ\pi$-chain map $-\cap[X]\colon C^{n-*}(\widetilde{X}) \to
C_*(\widetilde{X})$ is self-dual in the sense
that it is $\IZ\pi$-chain homotopic to the one obtained
from $-\cap[X]$ by applying the functor $C^{n-*}$ and the obvious
identification $(C^{n-*}(\widetilde{X}))^{n-*} = C_*(\widetilde{X})$.
This follows from the fact the chain map $-\cap[X]$ is the zeroth part
of a cocycle in the $Q^n$-group associated to $C_*(\widetilde{X})$
(see~\cite[Section~1 and Proposition~2.1]{Ranicki(1980b)}).   This implies
$$\rho(X)= \tau(-\cap [X]) = \tau\bigl(C^{n-*}(-\cap [X])\bigr) =
(-1)^n \cdot \ast\bigl(\tau(- \cap [X])\bigr) = (-1)^n \cdot \ast\bigl(\rho(X)\bigr).$$

The case of a pair is  more complicated.
For the definition of the mapping cylinder and cone of a chain map and the basic
properties of these we refer for instance to~\cite[page~213~ff.]{Lueck(1989)}.
We have the short based exact  sequences of finite based free $\IZ \pi$-chain complexes
$$0 \to C_*(\widetilde{\partial X}) \xrightarrow{i_*} C_*(\widetilde{X}) 
\xrightarrow{p_*} C_*(\widetilde{X},\widetilde{\partial X}) \to 0.$$
and
$$0 \to C^{n-*}(\widetilde{X},\widetilde{\partial X}) \xrightarrow{p^{n-*}} 
C^{n-*}(\widetilde{X}) 
\xrightarrow{i^{n-*}} C^{n-*}(\widetilde{\partial X}) \to 0.$$
We obtain short based exact sequences of  finite based free $\IZ \pi$-chain complexes
$$0 \to C_*(\widetilde{X})  \xrightarrow{j_*} \cyl(p_*) \xrightarrow{q_*} \cone(p_*)
\to 0$$
and
$$0 \to  \Sigma C_*(\widetilde{\partial X}) \xrightarrow{k_*} \cone(p_*) 
\xrightarrow{r_*} \cone\bigl(C_*(\widetilde{X},\widetilde{\partial X})\bigr) \to 0.$$
Since the chain map 
$0_* \to \cone\bigl(C_*(\widetilde{X},\widetilde{\partial X})\bigr)$
is a simple $\IZ \pi$-chain homotopy equivalence, the chain map 
$$k_* \colon \Sigma C_*(\widetilde{\partial X}) \xrightarrow{\simeq_s} \cone(p_*)$$
is a simple chain $\IZ \pi$-chain homotopy equivalence.

Analogously we get short based exact sequences of  
finite based free $\IZ \pi$-chain complexes
$$0 \to C^{n-*}(\widetilde{X},\widetilde{\partial X}) \xrightarrow{l_*} \cyl(p^{n-*}) 
\xrightarrow{r_*} \cone(p^{n-*}) \to 0$$
and
$$0 \to \cone\bigl(C^{n-*}(\widetilde{X},\widetilde{\partial X})\bigr)
\xrightarrow{m_*} \cone(p^{n-*})  \xrightarrow{s_*} 
C^{n-*}(\widetilde{\partial X}) \to 0$$
and 
$$s_* \colon \cone(p^{n-*})  \xrightarrow{\simeq_s} 
C^{n-*}(\widetilde{\partial X})$$
is a simple chain $\IZ \pi$-chain homotopy equivalence.

We obtain an up to chain homotopy commutative diagram 
of finite based free $\IZ\pi$-chain complexes
\xycomsquare{C^{n-*}(\widetilde{X},\widetilde{\partial X})}
{p^{n-*}} 
{C^{n-*}(\widetilde{X})}
{- \cap[X,\partial X]}
{- \cap[X,\partial X]}
{C_*(\widetilde{X})} 
{p_*} 
{C_*(\widetilde{X},\widetilde{\partial X})}
Actually there is a preferred chain homotopy which is unique up to higher homotopies
coming from the cocycle in the $Q^n$-group associated to $C_*(\widetilde{X})$
(see~\cite[Section~1 and Proposition~2.1]{Ranicki(1980b)}). 
Hence we obtain up to chain homotopy unique chain maps $\alpha_*$ and
$\beta_*$ making the following diagram with based exact rows commutative
$$
\xymatrix{
0 \ar[r] 
& C^{n-*}(\widetilde{X},\widetilde{\partial X}) \ar[r]_{l_*} \ar[d]_{- \cap[X,\partial X]} 
& \cyl(p^{n-*}) \ar[r]_{r_*} \ar[d]_{\alpha_*}
& \cone(p^{n-*}) \ar[d]_{\beta_*} \ar[r]
&  0
\\
0 \ar[r]
&
C_*(\widetilde{X})  \ar[r]_{j_*} 
&
\cyl(p_*) \ar[r]_{q_*} 
& \cone(p_*) \ar[r]
&  0
}
$$
Additivity of the Whitehead torsion implies
\begin{eqnarray*}
\rho(X,\partial X) 
& = & 
\tau\bigl(- \cap[X,\partial X] \colon C^{n-*}(\widetilde{X})
\to C_*(\widetilde{X},\widetilde{\partial X}) \bigr)
\\
& = & 
(-1)^n \cdot \ast\left(
\tau\bigl(- \cap[X,\partial X] \colon C^{n-*}(\widetilde{X},\widetilde{\partial X})
\to C_*(\widetilde{X})  \bigr)\right)
\\
& = & 
(-1)^n \cdot \ast\bigl(\tau(\alpha_*) - \tau(\beta_*)\bigr).
\end{eqnarray*}
Moreover, we obtain a commutative diagram
$$\xymatrix{
C^{n-*}(\widetilde{X}) \ar[r]_{u_*} \ar[d]_{- \cap[X,\partial X]}
&
\cyl(p^{n-*}) \ar[d]_{\alpha_*}
\\
C_*(\widetilde{X},\widetilde{\partial X}) \ar[r] \ar[r]_{v_*}
& \cyl(p_*)
}
$$
where $u_*$ and $v_*$ the canonical inclusions and simple chain homotopy equivalences.
From the composition formula for Whitehead torsion we conclude
\begin{eqnarray*}
\rho(X,\partial X) 
& = & 
\tau(\alpha_*).
\end{eqnarray*}
Finally we obtain an up to chain homotopy commutative diagram with
simple homotopy equivalences as rows
$$\xymatrix{
\cone(p^{n-*}) \ar[r]_{s_*} \ar[d]_{\beta_*}
&
\Sigma C^{n-1-*}(\widetilde{\partial X})  \ar[d]_{\Sigma(- \cap[\partial X])}
\\
\cone(p_*)
&
\Sigma C_*(\widetilde{\partial X}) \ar[l]^-{k_*}
}
$$
From the composition formula for Whitehead torsion we conclude
$$
(j_{\partial X})_*\bigl(\rho(\partial X)\bigr)
 =  
\tau\bigl(- \cap[\partial X]\bigr)
 = - \tau\bigl(\Sigma(- \cap[\partial X])\bigr)
 = -\tau(\beta_*).
$$
This implies
\begin{eqnarray*}
(j_{\partial X})_*\bigl(\rho(\partial X)\bigr)
& = & 
- \tau(\beta_*)
\\ 
& = & 
(-1)^n \cdot \ast\bigl(\rho(X,\partial X)\bigr) - \tau(\alpha_*)
\\
& = & 
(-1)^n \cdot \ast\bigl(\rho(X,\partial X)\bigr) - \rho(X,\partial X).
\end{eqnarray*}
This finishes the proof of 
assertion~\ref{the:prop_of_Poinc._tor:behaviour_under_involution}.
\\[2mm]%
\ref{the:prop_of_Poinc._tor:homotopy_invariance}
Obviously it suffices to treat the case, where $X$ and $Y$ are connected,
the general case follows componentwise.
Choose the fundamental classes such that
$H_n(f,\partial f)$ maps $[X,\partial X]$ to $[Y,\partial Y]$.
Then the following
diagram of $\IZ\pi$-chain complexes commutes where we identify
$\pi = \pi_1(X) = \pi_1(Y)$ by $\pi_1(f)$
$$\xymatrix@!C=8em{
C^{n-*}(\widetilde{X}) \ar[d]^-{\cap [X,\partial X]} &
C^{n-*}(\widetilde{Y}) \ar[l]^-{C^{n-*}(\widetilde{f})}
\ar[d]^-{\cap [Y,\partial Y]}
\\
C_*(\widetilde{X},\widetilde{\partial X}) \ar[r]^-{C_*(\widetilde{f},\widetilde{\partial f})}&
C_*(\widetilde{Y},\widetilde{\partial Y})
}$$
The composition formula for Whitehead torsion implies
$$\rho(Y,\partial Y) = \tau\big(C_*(\widetilde{f},\widetilde{\partial f})\bigr)
+ \rho(X,\partial X) + \tau\bigl(C^{n-*}(\widetilde{f})\bigr).$$
We get from additivity and the definitions
\begin{eqnarray*}
\tau\bigl(C_*(\widetilde{f},\widetilde{\partial f})\bigr)
& = & \tau(f) - (j_{\partial Y})_*(\tau(\partial f));
\\
C^{n-*}(\widetilde{f})
& = & (-1)^n \cdot \ast\bigl(\tau(f)\bigr).
\end{eqnarray*}
Now assertion~\ref{the:prop_of_Poinc._tor:homotopy_invariance}
follows.
\\[2mm]%
\ref{the:prop_of_Poinc._tor:glueing_formula}
Define $Z$ by the pushout
\xycomsquare{\partial X}{f}{\partial Y}{i_X}{}{X}{\overline{f}}{Z}
Then $(\overline{f},f) \colon (X,\partial X) \to (Z,\partial Y)$
is a homotopy equivalence of pairs of spaces. In the sequel we treat only the case,
where $X$ and hence also $Z$ are connected, the general case follows
by inspecting the individual components. 
The pair $(\overline{f},f)$ induces a base preserving  
isomorphism of cellular $\IZ[\pi]$-chain complexes
$$C_*(\widetilde{\overline{f}},\widetilde{f}) 
\colon C_*(\widetilde{X},\widetilde{\partial X}) 
\xrightarrow{\cong}  C_*(\widetilde{Z},\widetilde{\partial Y}),$$
where we identify $\pi = \pi_1(X) = \pi_1(Z)$, $\widetilde{X} \to X$ and
$\widetilde{Z} \to Z$ are the universal coverings and $\widetilde{\partial X}$ and
$\widetilde{\partial Y}$ are obtained by restriction to $\partial X$ and $\partial Y$. 
In particular $\tau\bigl(C_*(\widetilde{\overline{f}},\widetilde{f})\bigr) = 0$.
We conclude from assertion~\ref{the:prop_of_Poinc._tor:homotopy_invariance}
and the composition formula and additivity of Whitehead torsion.
\begin{eqnarray*}
\lefteqn{\rho(Z,\partial Y) - \rho(X,\partial X)}
& & 
\\ 
& = & 
\tau(\overline{f}) + (-1)^n \cdot \ast\bigl(\tau(\overline{f})\bigr) 
- (j_{\partial Y})_*\tau(f)
\\
& = & 
\bigl(\tau(\overline{f}) - (j_{\partial Y})_*(\tau(f))\bigr) 
+ (-1)^n \cdot  \ast\bigl(\tau(\overline{f}) - (j_{\partial Y})_*(\tau(f)) 
+ (j_{\partial Y})_*(\tau(f))\bigr)
\\
& = &
\tau\bigl(C_*(\widetilde{f},\widetilde{\overline{f}})\bigr) 
+ (-1)^n \cdot \ast\bigl(\tau(C_*(\widetilde{f},\widetilde{\overline{f}}) 
+ (j_{\partial Y})_*(\tau(f))\bigr)
\\
& = & 
(-1)^n \cdot \ast\circ (j_{\partial Y})_*\bigl(\tau(f)\bigr).
\end{eqnarray*}
There is an obvious 
homeomorphism $X \cup_f Y \xrightarrow{\cong} Z \cup_{\partial Y} Y$. Hence it remains
to show
\begin{eqnarray}
\rho(Z \cup_{\partial Y} Y) & = &
(-1)^n \cdot \ast \circ \bigl(j_Z)_*\bigl(\rho(Z,\partial Y)\bigr)
+ (j_Y)_*\bigl(\rho(Y,\partial Y)\bigr),
\label{remains_to_show}
\end{eqnarray}
where  $j_Z \colon Z \to Z \cup_{\partial Y} Y$ and 
$j_Y \colon Y \to Z \cup_{\partial Y} Y$ are the canonical inclusions.
In the following let $\widetilde{Z\cup_{\partial Y} Y} \to Z\cup_{\partial Y} Y$ be the
universal covering. Denote by $\widetilde{X} \to
X$, $\widetilde{Y} \to Y$, $\widetilde{\partial Y} \to \partial Y$
the restriction of it to $Y$, $X$ and $\partial Y$. Notice
that the these are not necessarily the universal coverings. 
By excision we obtain an isomorphism
$$H_n(Z,\partial Y;\IZ^w) \oplus H_n(Y,\partial Y;\IZ^w) 
\xrightarrow{\cong} H_n(Z \cup_{\partial Y} Y,\partial Y;\IZ^w).$$
The boundary homomorphisms $H_n(Y,\partial Y;\IZ^w) 
\to H_{n-1}(\partial Y;\IZ^w)$
and $H_n(Z,\partial Y;\IZ^w) \to H_{n-1}(\partial Y;\IZ^w)$ are injective
and we can arrange  such that $[Y,\partial Y]$ and $[Z,\partial Y]$
are mapped to $[\partial Y]$. The Mayer-Vietoris sequence yields an exact sequence
$$0 \to H_n(Z \cup_{\partial Y} Y;\IZ^w) 
\to H_n(Y;\partial Y;\IZ) \oplus H_n(Z,\partial Y;\IZ^w)
\to H_{n-1}(\partial Y;\IZ^w).$$
Let $[Z \cup_{\partial Y} Y]$ be the unique element in
$H_n(Z \cup_{\partial Y} Y;\IZ^w)$ which is mapped to
$\bigl([Z,\partial Y], [Y,\partial Y]\bigr)$. 
Then $[Z \cup_{\partial Y} Y]$ generates the infinite
cyclic group $H_n(Z \cup_{\partial Y} Y;\IZ)$ and we obtain a commutative diagram of
based free $\IZ \pi$-chain complexes whose vertical maps are the Poincar\'e duality
chain homotopy equivalences and whose rows are based exact sequences of finite based free
$\IZ \pi$-chain complexes.
$$
\xymatrix{
0 \ar[r] 
&
C^{n-*}(\widetilde{Z},\widetilde{\partial Y}) \ar[r] \ar[d]_{-\cap [Z,\partial Y]}
&
C^{n-*}(\widetilde{Z \cup_{\partial Y} Y}) \ar[r] \ar[d]_{-\cap [Z \cup_{\partial Y} Y]}
&
C^{n-*}(\widetilde{Y}) \ar[r] \ar[d]_{-\cap [Y,\partial Y]}
& 0
\\
0 \ar[r] 
&
C_*(\widetilde{Z}) \ar[r]
&
C_*(\widetilde{Z \cup_{\partial Y} Y}) \ar[r] 
&
C_*(\widetilde{Y},\widetilde{\partial Y}) \ar[r] 
& 0
}
$$
Additivity of the Whitehead torsion implies
that Whitehead torsion of the middle vertical arrow is the sum of the Whitehead torsions
of the left and of the right vertical arrow. The Whitehead torsion of the right vertical 
arrow is by definition $(j_Y)_*\bigl(\rho(Y,\partial Y)\bigr)$ 
and the Whitehead torsion of the
middle arrow is $\rho(Z \cup_{\partial Y} Y)$. If we apply $C^{n-*}$ to the left
arrow, we obtain 
$$-\cap [Z,\partial Y] \colon C^{n-*}(\widetilde{Z})
\to C_*(\widetilde{Z},\widetilde{\partial Y})$$ 
Hence  Whitehead torsion of the left arrow is 
$(-1)^n \cdot \ast \circ (j_Z)_*\bigl(\tau(Z,\partial Y)\bigr)$. 
This proves~\eqref{remains_to_show} and hence
assertion~\ref{the:prop_of_Poinc._tor:glueing_formula}.
\\[2mm]\ref{the:prop_of_Poinc._tor:product_formula} follows from the product
formula of Whitehead torsion and the K\"unneth isomorphism
\begin{eqnarray*}
C_*(\widetilde{X},\widetilde{\partial X}) \otimes
C_*(\widetilde{Y},\widetilde{\partial Y})
& \xrightarrow{\cong} &
C_*((\widetilde{X},\widetilde{\partial X})  \times
(\widetilde{Y},\widetilde{\partial Y}));
\\
C^{m-*}(\widetilde{X}) \otimes C^{n-*}(\widetilde{Y})
&\xrightarrow{\cong} &
C^{n+m-*}(\widetilde{X \times Y}),
\end{eqnarray*}
which are compatible with the various Poincar\'e chain duality maps.
\\[2mm]\ref{the:prop_of_Poinc._tor:triviality_for_manifolds}  
Kirby-Siebenmann~\cite[Essay~III, Theorem~5.13 on page~136]{Kirby-Siebenmann(1977)})
prove that there is a simple homotopy equivalence of pairs 
$(M,\partial M) \to (X,\partial X)$ for a simple finite Poincar\'e pair
$(X,\partial X)$.
This finishes the proof of
Theorem~\ref{the:prop_of_Poinc._tor}.
\end{proof}

Denote by $\widehat{H}^{n}(\IZ/2,\Wh(\pi))$ the Tate homology of
$\IZ/2$ with coefficients in $\Wh(\pi)$ with respect to the
involution $\ast$ introduced above. Explicitly
$$\widehat{H}^{n}(\IZ/2,\Wh(\pi))=\{x \in \Wh(\pi)\mid \ast x = (-1)^n\cdot x\}/
\{y + (-1)^n\cdot \ast y \mid y \in \Wh(\pi))\}.$$
Let $X$ be a space which has the homotopy type of a
finite $n$-dimensional Poincar\'e complex.
Let $f\colon X \to Y$ be any homotopy equivalence to a
finite $n$-dimensional Poincar\'e complex $Y$.
The Poincar\'e torsion $\rho(Y) \in \Wh(\pi(Y))$ satisfies
$\rho(Y) = (-1)^n \cdot \ast\rho(Y)$ by
Theorem~\ref{the:prop_of_Poinc._tor}~\ref{the:prop_of_Poinc._tor:behaviour_under_involution}
and hence defines a class in
$\widehat{H}^{n}(\IZ/2,\Wh(\pi(Y)))$. Denote by
$$\widehat{\rho}(X) \in \widehat{H}^n\bigl(\IZ/2;\Wh(\pi(X))\bigr)$$
the image of $\rho(Y)$ under the bijection
$\widehat{H}^n\bigl(\IZ/2;\Wh(\pi(Y))\bigr) \xrightarrow{\cong}
\widehat{H}^n\bigl(\IZ/2;\Wh(\pi(X))\bigr)$ induced by $f^{-1}$.
This is independent of the choice of
$f\colon X \to Y$ by
Theorem~\ref{the:prop_of_Poinc._tor}~\ref{the:prop_of_Poinc._tor:homotopy_invariance}.

\begin{definition} \label{def:Tate-Poincare_torsion}
Given a space $X$ of the homotopy type of a
finite $n$-dimensional Poincar\'e $CW$-complex we call
$$\widehat{\rho}(X) \in \widehat{H}^n\bigl(\IZ/2;\Wh(\pi(X))\bigr)$$
the \emph{Tate-Poincar\'e} torsion of $X$.
\end{definition}

We conclude from Theorem~\ref{the:prop_of_Poinc._tor}
and the definitions.

\begin{theorem} \label{the:_prop_of_Tate-Poinc._tor}
\begin{enumerate}

\item \label{the:_prop_of_Tate-Poinc._tor:homotopy_invariance}
If $f\colon X \to Y$
is a homotopy equivalence of spaces of the homotopy type
of finite $n$-dimensional Poincar\'e complexes, then the induced
isomorphism $\widehat{H}^n\bigl(\IZ/2,\Wh(\pi(X))\bigr) \xrightarrow{\cong}
\widehat{H}^n\bigl(\IZ/2,\Wh(\pi(Y))\bigr)$ maps $\widehat{\rho}(X)$ to
$\widehat{\rho}(Y)$;

\item \label{the:_prop_of_Tate-Poinc._tor:bordism_invariance}
Let $(X,\partial X)$ be a Poincar\'e pair. Let $j_{\partial M} \colon
\partial X \to X$ be the inclusion. Then the map
$$(j_{\partial X})_* \colon \widehat{H}^n\bigl(\IZ/2,\Wh(\pi(\partial X))\bigr)
\to \widehat{H}^n\bigl(\IZ/2,\Wh(\pi(X))\bigr)$$ 
induced by $j_{\partial M}$ sends $\widehat{\rho}(\partial X)$ to zero;
  
\item \label{the:_prop_of_Tate-Poinc._tor:product_formula}
Let $X$ resp. $Y$ be a space of the homotopy type of
a connected $m$- resp. $n$- dimensional finite Poincar\'e complex.
Then $X \times Y$ has the homotopy type of a
connected finite $(m+n)$-dimensional Poincar\'e $CW$-complex and
$$\widehat{\rho}(X \times Y) = \chi(X) \cdot (k_Y)_*\bigl(\widehat{\rho}(Y)\bigr)
+ \chi(Y)\cdot (k_X)_*\bigl(\widehat{\rho}(X)\bigr),$$
where $k_X\colon X \to X \times Y$ and $k_Y\colon Y \to X \times Y$ are the
inclusions;

\item \label{the:_prop_of_Tate-Poinc._tor:triviality_for_manifolds}
If $X$ has the homotopy type of an $n$-dimensional closed topological manifold,
then $X$ is homotopy equivalent to a simple $n$-dimensional Poincar\'e complex
and in particular
$$\widehat{\rho}(X) = 0;$$

\item \label{the:_prop_of_Tate-Poinc._tor:homotopy_versus_simple}
Let $X$ be an $n$-dimensional Poincar\'e complex. Then it  is homotopy
equivalent to a simple $n$-dimensional Poincar\'e complex if and only if
$\widehat{\rho}(X) = 0$ holds in $\widehat{H}^n(\IZ/2,\Wh(X))$.

\end{enumerate}
\end{theorem}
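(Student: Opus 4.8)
The plan is to deduce the five assertions from Theorem~\ref{the:prop_of_Poinc._tor} together with the definition of $\widehat{\rho}$; only~\ref{the:_prop_of_Tate-Poinc._tor:homotopy_versus_simple} requires an extra ingredient. For~\ref{the:_prop_of_Tate-Poinc._tor:homotopy_invariance}: specializing Theorem~\ref{the:prop_of_Poinc._tor}~\ref{the:prop_of_Poinc._tor:homotopy_invariance} to a homotopy equivalence $f\colon X\to Y$ of closed $n$-dimensional Poincar\'e complexes gives $\rho(Y) - f_*\bigl(\rho(X)\bigr) = \tau(f) + (-1)^n\cdot\ast\bigl(\tau(f)\bigr)$, which vanishes in $\widehat{H}^n\bigl(\IZ/2;\Wh(\pi(Y))\bigr)$ (being of the shape $y + (-1)^n\cdot\ast y$); combined with the well-definedness of $\widehat{\rho}$ (which rests on the same computation, cf.~the paragraph before Definition~\ref{def:Tate-Poincare_torsion}) this is exactly the assertion. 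For~\ref{the:_prop_of_Tate-Poinc._tor:bordism_invariance}: here $\partial X$ is $n$-dimensional, so the pair statement of Theorem~\ref{the:prop_of_Poinc._tor}~\ref{the:prop_of_Poinc._tor:behaviour_under_involution} reads $(j_{\partial X})_*\bigl(\rho(\partial X)\bigr) = (-1)^{n+1}\cdot\ast\bigl(\rho(X,\partial X)\bigr) - \rho(X,\partial X)$; since $(-1)^{n+1}\cdot\ast z - z = -\bigl(z + (-1)^n\cdot\ast z\bigr)$, this maps to $0$ in $\widehat{H}^n\bigl(\IZ/2;\Wh(\pi(X))\bigr)$, where one uses that $(j_{\partial X})_*$ intertwines the twisted involutions because $w_1(\partial X)$ is the restriction of $w_1(X)$.

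For~\ref{the:_prop_of_Tate-Poinc._tor:product_formula}: apply Theorem~\ref{the:prop_of_Poinc._tor}~\ref{the:prop_of_Poinc._tor:product_formula} with empty boundaries, note that $k_X$ and $k_Y$ are compatible with the twisted involutions (the orientation homomorphism of a product is the external product of those of the factors), and pass to Tate cohomology. The only subtlety is the $2$-periodicity of $\widehat{H}^\bullet(\IZ/2;-)$, so that the degrees $m$, $n$ and $m+n$ need not match; but a parity mismatch forces the relevant Euler characteristic to vanish, since an odd-dimensional Poincar\'e complex $Z$ has $\chi(Z) = (-1)^{\dim Z}\cdot\chi(Z)$ by Poincar\'e duality, and this kills the offending summand. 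For~\ref{the:_prop_of_Tate-Poinc._tor:triviality_for_manifolds}: a closed topological manifold is simply homotopy equivalent to a simple finite Poincar\'e complex $Z$ (Kirby--Siebenmann, as in the proof of Theorem~\ref{the:prop_of_Poinc._tor}~\ref{the:prop_of_Poinc._tor:triviality_for_manifolds}); hence a space $X$ of its homotopy type is homotopy equivalent to the simple Poincar\'e complex $Z$, and $\widehat{\rho}(X)$ is by definition the image of $\rho(Z) = 0$.

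It remains to prove~\ref{the:_prop_of_Tate-Poinc._tor:homotopy_versus_simple}. The implication ``$\Rightarrow$'' follows from~\ref{the:_prop_of_Tate-Poinc._tor:homotopy_invariance}: if $X$ is homotopy equivalent to a simple $n$-dimensional Poincar\'e complex $Z$, the induced isomorphism sends $\widehat{\rho}(X)$ to $\widehat{\rho}(Z)$, the class of $\rho(Z) = 0$. For ``$\Leftarrow$'', assume $\widehat{\rho}(X) = 0$; evaluating the definition of $\widehat{\rho}$ at $f = \id_X$, this means $\rho(X) = z + (-1)^n\cdot\ast z$ for some $z \in \Wh(\pi(X))$. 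By the realization of Whitehead torsion (see~\cite{Cohen(1973)}) there are a finite $CW$-complex $X'$ and a homotopy equivalence $g\colon X'\to X$ with $\tau(g) = z$; one may take $\dim X' \le \max(\dim X,3)$, so $X'$ is again $n$-dimensional once $n\ge 3$ (for $n\le 2$ the group $\Wh(\pi(X))$ vanishes and there is nothing to prove). Since being a finite Poincar\'e complex is a homotopy-type invariant of finite $CW$-complexes — naturality of the cap product transports the fundamental class of $X$ to one of $X'$ — the complex $X'$ is a finite $n$-dimensional Poincar\'e complex. Finally Theorem~\ref{the:prop_of_Poinc._tor}~\ref{the:prop_of_Poinc._tor:homotopy_invariance} for $g$ (empty boundary) yields
$$\rho(X) - g_*\bigl(\rho(X')\bigr) = \tau(g) + (-1)^n\cdot\ast\bigl(\tau(g)\bigr) = z + (-1)^n\cdot\ast z = \rho(X),$$
so $g_*\bigl(\rho(X')\bigr) = 0$, hence $\rho(X') = 0$: thus $X'$ is a simple $n$-dimensional Poincar\'e complex homotopy equivalent to $X$.

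The main obstacle is this last step: it uses the surjectivity of Whitehead torsion onto $\Wh(\pi(X))$ through homotopy equivalences of finite complexes — so the ``defect'' $z$ is geometrically realizable — and the homotopy invariance of the Poincar\'e-duality condition, so the realizing complex still satisfies it; granting these two classical facts, the remaining assertions are formal consequences of Theorem~\ref{the:prop_of_Poinc._tor} and the $2$-periodicity of Tate cohomology.
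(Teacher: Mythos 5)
Your proof is correct and follows essentially the same route as the paper's: parts~\ref{the:_prop_of_Tate-Poinc._tor:homotopy_invariance}--\ref{the:_prop_of_Tate-Poinc._tor:triviality_for_manifolds} are read off directly from Theorem~\ref{the:prop_of_Poinc._tor}, and for part~\ref{the:_prop_of_Tate-Poinc._tor:homotopy_versus_simple} one realizes the norm-subgroup representative as the Whitehead torsion of a homotopy equivalence from a finite $CW$-complex and applies the homotopy-invariance formula once more. Two remarks: your sign convention $\rho(X)=z+(-1)^n\ast z$, together with $\tau(g)=z$, is the consistent one — the paper's printed choice $-\rho(X)=y+(-1)^n\ast y$ combined with $\tau(f)=y$ would actually yield $f_*(\rho(Y))=2\rho(X)$ rather than $0$ — and you make explicit two points the paper leaves tacit, namely that a degree-parity mismatch in the product formula is harmless because an odd-dimensional Poincar\'e complex has vanishing Euler characteristic, and that the realizing complex $X'$ inherits from $X$ the structure of a finite $n$-dimensional Poincar\'e complex (note that since the definition of ``$n$-dimensional Poincar\'e complex'' constrains only the duality map and not the $CW$-dimension of $X'$, the dimension estimate you invoke for $X'$ is not actually needed).
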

\begin{proof}\ref{the:_prop_of_Tate-Poinc._tor:homotopy_invariance}
This follows from
Theorem~\ref{the:prop_of_Poinc._tor}~\ref{the:prop_of_Poinc._tor:homotopy_invariance}.
\\[2mm]\ref{the:_prop_of_Tate-Poinc._tor:bordism_invariance}
This follows from  Theorem~\ref{the:prop_of_Poinc._tor}~%
\ref{the:prop_of_Poinc._tor:behaviour_under_involution}.
\\[2mm]\ref{the:_prop_of_Tate-Poinc._tor:product_formula}
This follows from
Theorem~\ref{the:prop_of_Poinc._tor}~\ref{the:prop_of_Poinc._tor:product_formula}.
\\[2mm]\ref{the:_prop_of_Tate-Poinc._tor:triviality_for_manifolds}
This follows from
Theorem~\ref{the:prop_of_Poinc._tor}~\ref{the:prop_of_Poinc._tor:triviality_for_manifolds}
and assertion~\ref{the:_prop_of_Tate-Poinc._tor:homotopy_invariance}.
\\[2mm]\ref{the:_prop_of_Tate-Poinc._tor:homotopy_versus_simple}
Since $\widehat{\rho}(X) = 0$ in $\widehat{H}^n(\IZ/2,\Wh(X))$, we can find
$y \in \Wh(X)$ with $-\rho(X) = y + (-1)^n \ast y$.
Choose a finite $CW$-complex $Y$ together with a homotopy equivalence
$f \colon Y \to X$ satisfying $\tau(f) = y \in \Wh(X)$.
Then we conclude $\rho(Y) = 0$ from
Theorem~\ref{the:prop_of_Poinc._tor}~\ref{the:prop_of_Poinc._tor:homotopy_invariance}.
\end{proof}

\begin{remark} \label{rem:obstruction_to_be_a-manifold.}
Let $f \colon  M \to N$ be a map of closed manifolds. 
Suppose that $M$ and $N$ are connected.
Suppose that the homotopy fiber $\hofib(f)$ has the homotopy type of a finite
$CW$-complex. Then we have the fibration of spaces of the homotopy type of finite
$CW$-complexes
$$\hofib(f) \to \FIB(f) \to N$$
such that the total space has the homotopy type of a finite Poincar\'e complex and
the base space is a finite Poincar\'e complex. 
We conclude from~\cite{Gottlieb(1979)} that also the homotopy fiber has the homotopy
type of a finite Poincar\'e complex. 
Hence we can define
\begin{eqnarray}
\widehat{\rho(f)} &  = & \widehat{\rho}\bigl(\hofib(f)\bigr) 
\in \widehat{H}^n\bigl(\IZ/2,\Wh(\pi(\hofib(f))\bigr).
\label{Poincare-torsion_obstruction}
\end{eqnarray}

Suppose that $f$ is homotopic to a map $p \colon M \to N$ which is the projection of
a locally trivial fiber bundle with a closed manifold $F$ as fiber.
Then the homotopy fiber $\hofib(f)$ of $f$
is homotopy equivalent to $F$. Theorem~\ref{the:_prop_of_Tate-Poinc._tor}
implies 
$$\widehat{\rho}(\hofib(f)) = 0.$$
Hence we have besides the obstructions appearing in 
Definition~\ref{def:fiber_torsion_obstruction}
another torsion obstruction for
$f$ to be homotopic to a bundle projection.
\end{remark}

\typeout{-----------------------  Section 11 ------------------------}

\section{Connection to the parametrized $A$-theory characteristic}
\label{sec:Connection_to_the_parametrized_A-theory_characteristic}

The element $\Theta(f)$ defined here is a shadow of the parametrized
$A$-theory characteristic, as defined by Dwyer-Weiss-Williams
\cite{Dwyer-Weiss-Williams(2003)}. In this section we give a sketch of
the relationship.

The parametrized $A$-theory characteristic $\chi(p)$ is defined for
any fibration $p\colon E\to B$ with homotopy finitely dominated fibers
and can be understood as a section of the fibration obtained from $p$ by
applying the (connective) $A$-theory functor fiberwise. We are going
to write
\[\chi(p)\in \pi_0\sections{A_B(E)}{B} =: H^0(B;A(F_b)),\]
thinking of this as the zeroth cohomology of $B$ with twisted
coefficients in the spectrum $A(F_b)$ (where $F_b$ denotes the fiber
of $p$ over $b$).

The natural transformation $A(X)\to\Wh^{PL}(X)$ induces a map
$H^0(B;A(F_b))\to H^0(B;\Wh^{PL}(F_b))$; denote the image of $\chi(p)$
by $\Wall(p)$, the parametrized Wall obstruction. Dwyer-Weiss-Williams
show the following:

\begin{theorem}
  The fibration $p$ is fiber homotopy equivalent to a bundle of
  compact topological manifolds (possibly with boundary) if and only
  if $\Wall(p)=0$.
\end{theorem}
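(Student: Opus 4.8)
This theorem is a restatement of the main result of Dwyer--Weiss--Williams \cite{Dwyer-Weiss-Williams(2003)}, so the plan is to recall the structure of their argument rather than to reprove it from scratch. The necessity direction (a bundle of compact manifolds has vanishing parametrized Wall obstruction) is the easy half: if $p$ is fiber homotopy equivalent to a bundle of compact topological manifolds, then each fiber is homotopy equivalent to a compact manifold and so is homotopy finitely dominated; more precisely, the fiberwise $A$-theory characteristic $\chi(p)$ lifts, by the topological manifold structure, to the component of $A(F_b)$ coming from $\Whitehead^{PL}$-trivial data, which forces $\Wall(p) = 0$. Concretely, I would invoke the fiberwise version of the Wall finiteness obstruction together with the fact that a compact topological manifold is a finite (indeed simple) Poincar\'e complex, as established above in Theorem~\ref{the:prop_of_Poinc._tor}~\ref{the:prop_of_Poinc._tor:triviality_for_manifolds}, to see that the obstruction to fiberwise finiteness vanishes, and $\Wall(p)$ is precisely this obstruction read off in $H^0(B;\Wh^{PL}(F_b))$.

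For the sufficiency direction, the plan is to follow the two-stage strategy of Dwyer--Weiss--Williams. First, $\Wall(p) = 0$ means exactly that $p$ admits a \emph{fiberwise finiteness structure}: there is a fibration $p'\colon E' \to B$, fiber homotopy equivalent to $p$, whose fibers are homotopy equivalent to finite complexes, and in fact one can promote this to a fiberwise CW structure by standard fiberwise obstruction theory over the cells of $B$. Second, and this is the substantial input, one must upgrade a fiberwise finite fibration to a fiberwise \emph{compact manifold} bundle. This is where the parametrized $A$-theory characteristic $\chi(p)$ itself (not just its image $\Wall(p)$) enters: Dwyer--Weiss--Williams show that the space of compact topological manifold structures on the fibers of $p$, rigidified appropriately, is the homotopy fiber of a map of parametrized spaces built from $A$-theory, and that the obstruction to a section is detected by the difference between $\chi(p)$ and the "manifold" part of $A$-theory. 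Once $\Wall(p)=0$, this obstruction lives in a contractible space, so a section — i.e., a fiberwise manifold structure — exists.

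I would then assemble these pieces: given $\Wall(p) = 0$, produce the fiberwise finite model, apply the Dwyer--Weiss--Williams index theorem for the parametrized $A$-theory Euler characteristic to identify the remaining obstruction with a class that is automatically trivial, and conclude that $p$ is fiber homotopy equivalent to a bundle of compact topological manifolds (with boundary allowed, since we work with the connective, rather than the "compact without boundary", version of $A$-theory). The hardest step — and the one I would be least inclined to reprove here — is precisely the manifold-recognition input of Dwyer--Weiss--Williams: identifying the moduli space of fiberwise compact manifold structures with a homotopy-theoretic object defined purely in terms of $A$-theory and its linearization, and showing that the relevant assembly/index map has the expected homotopy fiber. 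This rests on the stable parametrized $h$-cobordism theorem and on Waldhausen's identification of $A(X)$ with the stable smoothing theory of $X$, so in the write-up I would state it as the cited black box and use it directly.
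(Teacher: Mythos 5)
The paper itself offers no proof of this theorem; it is cited directly as the converse Riemann--Roch theorem of Dwyer--Weiss--Williams, and your treating it as a citation to be unpacked rather than a claim to be reproved is exactly the paper's own stance. Your outline of what the citation packages --- the easy implication by producing a fiberwise simple finite structure from the compact manifold bundle so that $\chi(p)$ lifts along $A(F_b)\to\Wh^{PL}(F_b)$, and the hard implication via Dwyer--Weiss--Williams's identification of the moduli space of fiberwise compact manifold structures with an $A$-theoretic object using the stable parametrized $h$-cobordism theorem --- is a faithful sketch of the cited argument and goes beyond what the paper records.
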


Therefore, if a map $f\colon M\to B$ between manifolds is homotopic to
a fiber bundle, then $\Wall(p)$ is defined and vanishes, 
with $p\colon E\to B$ the fibration associated to $f$. 
There is an Atiyah-Hirzebruch type spectral sequence
\[E_2^{pq}=H^p(B;\pi_{-q}\Wh^{PL}(F_b)) \: \Longrightarrow \:
H^{p+q}(B;\Wh^{PL}(F_b)),\] where the cohomology on the left hand side
is ordinary cohomology with twisted coefficients in the system
$\{b\mapsto \pi_{-q}\Wh^{PL}(F_b)\}$. Denote by $P_1\Wh^{PL}(F_b)$ the
first Postnikov approximation of $\Wh^{PL}(F_b)$, such that 
$\pi_n P_1\Wh^{PL}(F_b)=0$ for $n\geq 2$. The Atiyah-Hirzebruch spectral
sequence reduces to the exact sequence
\[0\to H^1(B;\pi_1\Wh^{PL}(F_b))\to H^0(B;P_1\Wh^{PL}(F_b))\to
H^0(B;\pi_0\Wh^{PL}(F_b))\to 0.\]
Denote by $P_1\Wall(p)$ the image of $\Wall(p)$ in $H^0(B;P_1\Wh^{PL}(F_b))$.

\begin{proposition}\label{prop:A-theory}
  \begin{enumerate}
  \item \label{prop:A-theory:image}
  The image of $P_1\Wall(p)$ in
    \[H^0(B;\pi_0\Wh^{PL}(F_b))\cong \bigoplus_{[b]\in\pi_0 B} \tilde
    K_0(\IZ[\pi_1(F_b,b)])^{\pi_1(B,b)}\] consists of the Wall
    obstructions of the fibers over every path component of $B$;
  \item \label{prop:A-theory:theta} 
    Suppose that all the fibers have the homotopy type of a finite
    $CW$-complex. The lift of $P_1\Wall(p)$ to
    $H^1(B;\pi_1\Wh^{PL}(F_b))$ maps to $\Theta(p)$ under the
    map
    \[H^1(B;\pi_1\Wh^{PL}(F_b))\to H^1(B;\pi_1\Wh^{PL}(E))\cong
    H^1(B;\Wh(\pi(E)))\]
    induced by the inclusion.
  \end{enumerate}
\end{proposition}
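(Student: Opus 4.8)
The plan is to prove the two assertions separately; \ref{prop:A-theory:image} is the expected compatibility of the characteristic with restriction to a fibre, while \ref{prop:A-theory:theta} requires a mapping–torus computation. For \ref{prop:A-theory:image} I would use the basic property of the Dwyer--Weiss--Williams characteristic that it is \emph{fibrewise}: for each $b\in B$ the value of $\chi(p)$ on the fibre of $A_B(E)\to B$ over $b$ is the unparametrized $A$-theory Euler characteristic $\chi(F_b)\in A(F_b)$. Hence the restriction of $\Wall(p)$, and a fortiori of $P_1\Wall(p)$, to $b$ is the image of $\chi(F_b)$ in $\Wh^{PL}(F_b)$, whose $\pi_0$-component is by construction the Wall finiteness obstruction $\widetilde w(F_b)\in\pi_0\Wh^{PL}(F_b)\cong\tilde K_0(\IZ[\pi_1(F_b,b)])$. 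Since $H^0(B;\pi_0\Wh^{PL}(F_b))$ is precisely the $\pi_1(B,b)$-invariant part of this group on each path component, and the Wall obstructions are invariant under fibre transport (which is a homotopy equivalence), this gives \ref{prop:A-theory:image}.

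For \ref{prop:A-theory:theta} the first step is the reduction to $B=S^1$. Once all fibres are homotopy finite, \ref{prop:A-theory:image} shows that $P_1\Wall(p)$ has trivial image in $H^0(B;\pi_0\Wh^{PL}(F_b))$, so by exactness of the displayed sequence it lifts uniquely to a class $\lambda\in H^1(B;\pi_1\Wh^{PL}(F_b))$; here I use Waldhausen's natural identification $\pi_1\Wh^{PL}(X)\cong\Wh(\pi_1 X)$. The key point is that, viewing $\pi_1\Wh^{PL}(E)$ as a \emph{constant} local system over $B$, the inclusions $F_b\hookrightarrow E$ do induce a map of local systems out of $\pi_1\Wh^{PL}(F_b)$, because $i_b\circ t([\gamma])\simeq i_b$ makes the fibre–transport action trivial on $\Wh(\pi_1 E)$ — the same mechanism by which $\Theta(p)$ arises from a genuine homomorphism $\pi_1(B,b)\to\Wh(\pi(E))$, as recalled in the excerpt. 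Consequently both $\Theta(p)$ and the image of $\lambda$ in $H^1(B;\Wh(\pi(E)))\cong\mathrm{Hom}(H_1 B,\Wh(\pi(E)))$ are determined by their pullbacks along loops $w\colon S^1\to B$, and by naturality of $\chi$, of $\Wall$, and of the constructions of the paper it suffices to prove the equality for $B=S^1$. In that case $w^*E$ is fibre homotopy equivalent to the mapping torus $T_\phi$ of the monodromy $\phi\colon F\to F$ of $q=w^*p$ (Lemma~\ref{lem:turning_a_map_into_a_fibration}, as in the proof of Lemma~\ref{lem:tauprime(f)_and_T_l_1}), and by fibre homotopy invariance of $\chi$ and of $\Theta$ (Lemma~\ref{lem:simple_structure_and_fiber_homotopy_equivalences}) we may replace $w^*E$ by $T_\phi$.

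The heart of the argument is then the mapping–torus case. Over $S^1$ the displayed exact sequence is just the Atiyah--Hirzebruch (equivalently homotopy fixed point) sequence $0\to\Wh(\pi_1 F)_{\phi_*}\to H^0(S^1;\Wh^{PL}(F))\to\tilde K_0(\IZ[\pi_1 F])^{\phi_*}\to 0$, and, $S^1$ having cohomological dimension one, the $P_1$-truncation changes nothing; since $F$ is finite the lift $\lambda$ exists in $\Wh(\pi_1 F)_{\phi_*}=H^1(S^1;\pi_1\Wh^{PL}(F))$. I would identify $\lambda$ with the class of the Whitehead torsion $\tau(\phi)$ by writing $T_\phi$ as the homotopy pushout of $F\xleftarrow{\nabla}F\sqcup F\hookrightarrow\cyl(\phi)$ over the pushout $\ast\leftarrow\ast\sqcup\ast\hookrightarrow[0,1]$ presenting $S^1$ (Remark~\ref{rem:mapping_tori}), and applying the additivity (gluing) and homotopy–invariance properties of the parametrized $A$-theory characteristic, which are the $A$-theoretic analogues of the sum and homotopy–invariance formulas of Lemma~\ref{lem:properties_of_Whitehead_torsion}. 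The pieces over $[0,1]$ and over the two points contribute only formal, ``split'' terms, and the single genuine contribution to the $\Wh^{PL}$–lift is the torsion of the identification $\cyl(\phi)\simeq F$, namely $\tau(\phi)$ — precisely the $A$-theoretic shadow of the computations in the proofs of Lemma~\ref{lem:tauprime(f)_and_T_l_1} and Lemma~\ref{lem_theta_and_tau_fib_for_h_cobordism}. Pushing forward along $i\colon F\to T_\phi$ then sends $\lambda$ to $i_*\tau(\phi)\in\Wh(\pi(T_\phi))=H^1(S^1;\Wh(\pi(T_\phi)))$, which is $\Theta(p_{T_\phi})$ by the definition of $\Theta$ and Lemma~\ref{lem:tauprime(f)_and_T_l_1}, completing the reduction.

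I expect the main obstacle to be making this last step precise: one must extract the $\pi_1\Wh^{PL}$-component of the parametrized characteristic of a mapping torus and match it with classical Whitehead torsion, which requires both the gluing/homotopy–invariance formalism of Dwyer--Weiss--Williams~\cite{Dwyer-Weiss-Williams(2003)} for $\chi$ and Waldhausen's identification of $\pi_1\Wh^{PL}(X)$ with $\Wh(\pi_1 X)$ in a form compatible with the torsion of homotopy equivalences of finite complexes. An alternative I would keep in reserve is to invoke naturality to reduce to a universal finite model $F$ carrying a self–equivalence $\phi$ realizing a prescribed element of $\Wh(\pi_1 F)$ and to compute there; but the $A$-theory bookkeeping is of comparable difficulty either way.
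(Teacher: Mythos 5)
Your proof of part~\ref{prop:A-theory:image} is essentially the paper's: both rest on the fibrewise evaluation of the DWW characteristic and the fact that the path component of $\chi(F_b)$ in $\pi_0\Wh^{PL}(F_b)$ is the Wall obstruction.

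For part~\ref{prop:A-theory:theta}, however, you take a genuinely different route from the paper. The paper argues \emph{globally}: it sets up a dictionary in which a simple structure on a finite space $X$ is the same thing as a homotopy class of lift of $\chi(X)\in A(X)$ to an excisive characteristic $\chi^\%(X)\in A^\%(X)$, so that the Whitehead torsion of a homotopy equivalence is read off from the naturality of $\chi$; once Waldhausen's canonical excisive characteristic for finite complexes is identified with the DWW one and matched with the canonical simple structure (via the equivalence of the algebraic and geometric Whitehead group, \cite[\S 21]{Cohen(1973)}), the identity $\lambda\mapsto\Theta(p)$ drops out of the construction of the Postnikov short exact sequence without any reduction of the base. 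Your argument is instead \emph{local}: use the constancy of $\Wh(\pi(E))$ as a coefficient system and naturality of $\Wall$ to reduce to $B=S^1$, then replace the fibration by a mapping torus (as in Lemma~\ref{lem:tauprime(f)_and_T_l_1}) and extract $\tau(\phi)$ from the pushout presentation of $T_\phi$ over the pushout presentation of $S^1$ by additivity of the parametrized characteristic.

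Both are sketches at roughly the same level of rigor, so neither is clearly superior, but they spend their effort in different places. Your route avoids the excisive/assembly machinery entirely, replacing it with reduction to $S^1$ and a mapping-torus decomposition; the cost, which you correctly flag, is the delicate bookkeeping in the additivity step: one must extract the $\pi_1\Wh^{PL}$-component of $\chi$ of a pushout of fibrations over a pushout of base spaces and match the resulting $H^1$-class with $[\tau(\phi)]\in\Wh(\pi_1F)_{\phi_*}$, which is not a formal consequence of naturality alone. The paper's route buys a conceptual explanation --- simple structures literally are lifts of $\chi$ through the assembly map --- that makes the equality $\lambda\mapsto\Theta(p)$ essentially tautological once the dictionary is in place, at the cost of having to verify the dictionary. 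One further small point: you describe the nontrivial contribution as ``the torsion of the identification $\cyl(\phi)\simeq F$''; the mapping cylinder retraction is itself simple, and what actually carries the torsion $\tau(\phi)$ is the comparison of the two end inclusions $F\hookrightarrow\cyl(\phi)$, one of which becomes $\phi$ and the other the identity after retracting. That is the monodromy of the pushout and is what produces $[\tau(\phi)]$ in $H^1$.

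Your fallback --- reduce by naturality to a universal finite model with prescribed monodromy torsion --- would also work but, as you note, trades one kind of bookkeeping for another.
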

\begin{proof}%
\ref{prop:A-theory:image} This assertion  mainly depends
on the fact that the path component of the unparametrized $A$-theory
characteristic gives the unreduced Wall obstruction (which follows
rather easily from the linearization map to $K$-theory). 
\\%
\ref{prop:A-theory:theta} One needs to show 
that a simple structure on a space $X$ is the
same thing as a (homotopy class of a) lift of $\chi(X)$ to an
``excisive characteristic'' $\chi^\%(X)\in A^\%(X)$, and that the
naturality of the $A$-theory characteristic for homotopy equivalences
allows to describe the Whitehead torsion with respect to these lifts.

Then observe that Waldhausen's description of the $A$-theory assembly
map \cite{Waldhausen(1985)} defines a canonical excisive
characteristic for finite $CW$ complexes. The equivalence of the
algebraic and the geometric definition of the Whitehead group \cite[\S
21]{Cohen(1973)} implies that the corresponding simple structure is
just the canonical one. Once one has identified Waldhausen's excisive
characteristic with the excisive characteristic $\chi^\%(X)$ defined
by Dwyer-Weiss-Williams for compact ENRs $X$ (in the case where both
are defined), assertion~\ref{prop:A-theory:theta} 
follows by the construction of the short exact sequence.
\end{proof}

\begin{remark}
  In unpublished work~\cite{Weiss-Williams(2009autoIII)},
  Weiss-Williams considerably strengthen the $A$-theory
  characteristic to a so-called $LA$-theory characteristic, defined for a finitely
  dominated Poincar\'e duality space. There is a corresponding parametrized version taking
  values in $H^0(B;LA(F_b))$ in our notation. Let $p$ be a fibration with Poincar\'e
  duality spaces as fibers, such that the dimension of the base is small compared to the
  formal dimension of the fibers. The image of the parametrized $LA$-theory characteristic
  of $p$ in the cofiber of the $LA$-theoretic assembly map is ``almost'' the total
  obstruction for $p$ to be fiber homotopy equivalent to a fiber bundle of \emph{closed}
  topological manifolds.
\end{remark}

\typeout{-----------------------Section 12---------------------}

\section{Some questions}
\label{sec:Some_questions}

  Let $f \colon M \to N$ and $g \colon N \to B$
  be maps of closed path-connected manifolds. Assume that the homotopy
  fiber of both $f$ and $g$ has the homotopy type of a finite
  $CW$-complex.  Then the same is true for the composite $g \circ f$
  since there is a fibration $\hofib(f) \to \hofib(g \circ f) \to
  \hofib(g)$. So the elements $\Theta(f) \in H^1\bigl(N,\Wh(\pi(M))\bigr)$,
  $\Theta(g) \in H^1\bigl(N,\Wh(\pi(N))\bigr)$ and $\Theta(g \circ f) \in
  H^1\bigl(B;\Wh(\pi(M))\bigr)$ are defined. 

\begin{question}\label{que:theta}
What is the relation between $\Theta(g \circ f)$, $\Theta(f)$
and $\Theta(g)$?
\end{question}

\begin{question}\label{que:B_apherical}
If $N$ is aspherical, what are the other obstructions besides the
torsion obstructions presented in this paper for $f$ to be homotopic to a 
bundle projection?
\end{question}

Notice that in the case $B = S^1$ there are no other obstructions
because of Theorem~\ref{the:Comparison_with_Farrell}.

\begin{question}\label{que:homotopy_fiber}
Suppose that $M$ and $N$ are aspherical. Is then the homotopy fiber
  a closed manifold?
\end{question}

The question may have a positive answer in favorite circumstances because of the
following remarks. Suppose that the Farrell-Jones
Conjecture for algebraic $K$- and $L$-theory with arbitrary coefficients 
is true for the fundamental group of $E$. (This is known to be true
for a large class of groups.) Assume that the difference 
of the dimensions of $E$ and $B$ is at least six. Moreover, assume that
the resolution obstruction of Quinn 
(see~\cite{Quinn(1987_resolution)}) vanishes for all aspherical closed
ANR-homology manifolds. (There is no counterexample to this assumption 
known to the authors.) Then one can deduce that the homotopy fiber
is homotopy equivalent to a closed topological manifold and this 
closed topological manifold is unique up to homeomorphism
(see~\cite{Bryant-Ferry-Mio-Weinberger(1996)}).

\begin{question} \label{que:B_and_E_aspherical}
Suppose that $M$ and $N$ are aspherical.
Are there any obstructions for $p$ being homotopic to a bundle
projection of a locally trivial bundle, or for weaker notions,
such as block bundles?
\end{question}

Quinn developed a technique addressing the block bundle case of this question in~\cite[Section~1]{Quinn(1970)}. Using Quinn's technique a partial result on the block bundle question was obtained in~\cite[Theorem~10.7]{Farrell-Jones(1989)}.

\typeout{-----------------------  References ------------------------}



\end{document}